\documentclass{amsart}

\oddsidemargin 6pt \evensidemargin 6pt \marginparwidth 48pt
\marginparsep 10pt
\topmargin -18pt \headheight 12pt \headsep 25pt  \footskip 30pt
\textheight 625pt \textwidth 431pt \columnsep 10pt \columnseprule 0pt

\usepackage[all]{xy}
\usepackage{tikz}

\theoremstyle{plain}
\newtheorem{thm}{Theorem}[section]
\newtheorem{prop}[thm]{Proposition}
\newtheorem{lemma}[thm]{Lemma}

\newtheorem{cor}[thm]{Corollary}

\theoremstyle{definition}
\newtheorem{dfn}[thm]{Definition}

\theoremstyle{remark}
\newtheorem{rem}[thm]{Remark}

\newcommand{\HH}{\mathrm{H}}

\begin{document}

\title{Universal deformation rings and self-injective Nakayama algebras}

\author{Frauke M. Bleher}
\address{F.B.: Department of Mathematics\\University of Iowa\\
14 MacLean Hall\\Iowa City, IA 52242-1419, U.S.A.}
\email{frauke-bleher@uiowa.edu}
\thanks{The first author was supported in part by NSF Grant DMS-1360621.}
\author{Daniel J. Wackwitz}
\address{D.W.: Department of Mathematics\\University of Wisconsin-Platteville\\435 Gardner Hall,
1 University Plaza\\Platteville, WI 53818, U.S.A.}
\email{wackwitzd@uwplatt.edu}

\subjclass[2000]{Primary 16G10; Secondary 16G20, 20C20}
\keywords{Universal deformation rings, Nakayama algebras, Brauer tree algebras, cyclic blocks}

\begin{abstract}
Let $k$ be a field and let $\Lambda$ be an indecomposable finite dimensional  $k$-algebra such that there is
a stable equivalence of Morita type between $\Lambda$ and a self-injective split basic Nakayama
algebra over $k$. We show that every indecomposable finitely generated $\Lambda$-module $V$
has a universal deformation ring $R(\Lambda,V)$ and we describe $R(\Lambda,V)$ explicitly as a 
quotient ring of a power series ring over $k$ in finitely many variables.
This result applies in particular to Brauer tree algebras, and hence to $p$-modular blocks of finite
groups with cyclic defect groups.
\end{abstract}

\maketitle


\section{Introduction}
\label{s:intro}
\setcounter{equation}{0}

Let $k$ be a field of arbitrary characteristic, and let $\Lambda$
be a finite dimensional algebra over $k$. Given a finitely generated $\Lambda$-module
$V$, it is a natural question to ask over which complete local commutative Noetherian 
$k$-algebras $R$ with residue field $k$ the module $V$ can be lifted. Here a lift is 
a pair $(M,\phi)$ where $M$ is a free $R$-module with a $\Lambda$-module action and
$\phi:k\otimes_R M\to V$ is a $\Lambda$-module isomorphism. 
It was shown in \cite[Prop. 2.1]{blehervelez} that there exists a particular complete 
local commutative Noetherian $k$-algebra $R(\Lambda,V)$ with residue field $k$ and a 
particular lift $(U,\phi_U)$ of $V$ over $R(\Lambda,V)$ with the following property:
Every lift $(M,\phi)$ of
$V$ over a $k$-algebra $R$ as above is isomorphic to a specialization of $(U,\phi_U)$
via a (not necessarily unique) $k$-algebra homomorphism $R(\Lambda,V)\xrightarrow{\alpha} R$. 
Moreover, $\alpha$ is unique when $R=k[\epsilon]$ is the ring of dual numbers with $\epsilon^2=0$.
The ring $R(\Lambda,V)$ is called a versal deformation ring of $V$ and the isomorphism
class of the lift $(U,\phi_U)$ is called a versal deformation of $V$. 
One is especially interested in the situation when $\alpha$ is {unique} for
every isomorphism class of lifts of $V$ over \emph{every} $k$-algebra $R$ as above, and one calls
$R(\Lambda,V)$ a universal deformation ring of $V$ in this case.
It was shown in \cite[Thm. 2.6]{blehervelez} that when $\Lambda$ is self-injective and 
the stable endomorphism ring of $V$ over $\Lambda$ is isomorphic to $k$, then $R(\Lambda,V)$ 
is universal.
The question remains for which algebras $\Lambda$ every finitely generated indecomposable
non-projective $\Lambda$-module has a universal deformation ring.

In this paper, we study the case when $\Lambda$ is an indecomposable $k$-algebra that is
stably Morita equivalent to a self-injective split basic 
Nakayama algebra and $V$ is an arbitrary finitely generated indecomposable non-projective $\Lambda$-module.
Our main goal is to show that no matter how big the $k$-dimension of the stable endomorphism ring 
of $V$ is, $V$ always has a universal deformation ring. Moreover, we will give an 
explicit description of this universal deformation ring for each such $V$ in terms of generators and relations
that only depends on the location of $[V]$ in the stable Auslander-Reiten quiver of $\Lambda$.

Before stating our results, let us discuss some background on studying lifts and deformation rings of modules.

The problem of lifting modules has a long tradition when $\Lambda$ is replaced by the group ring $kG$ of a finite 
(or profinite) group 
$G$ and $k$ is a perfect field of positive characteristic $p$. In this case, one not only studies lifts of $V$ to 
complete local commutative Noetherian $k$-algebras but to arbitrary complete local commutative 
Noetherian rings with residue field $k$. 
One of the first results in this direction is due to Green who proved in \cite{Green} that if $k$ is the residue field 
of a ring of $p$-adic integers $\mathcal{O}$ then a finitely generated $kG$-module $V$ can be lifted to 
$\mathcal{O}$ if there are no non-trivial 2-extensions of $V$ by itself. 
Green's work inspired Auslander, Ding and Solberg in \cite{ads} to
consider more general  algebras over Noetherian rings and more general lifting problems. 
In \cite{rickardtilting}, Rickard generalized Green's result to modules for arbitrary finite rank algebras over
complete local commutative Noetherian rings, as a consequence of his study of lifts of tilting complexes.
On the other hand, Laudal developed a theory of formal moduli of algebraic structures, 
and, working over an arbitrary field $k$,  he used Massey products
to describe deformations of $k$-algebras and their modules over complete local commutative Artinian $k$-algebras
with residue field $k$ (see \cite{laudal1983} and its references). 

Sometimes  it may happen
that the algebra whose modules and their deformations one would like to study is 
only known up to a derived or stable equivalence.
In \cite{blehervelezderived}, the behavior of deformations under such equivalences was studied.
In particular, it was shown in \cite[Sect. 3.2]{blehervelezderived} that versal 
deformation rings of modules for self-injective algebras are preserved under stable equivalences of Morita type.
Hence these versal deformation rings provide invariants of such equivalences.

\medskip

In this paper we let $k$ be an arbitrary field, and we concentrate on finite dimensional $k$-algebras of finite 
representation type. More specifically, we focus on indecomposable $k$-algebras 
$\Lambda$ for which there exists a stable equivalence of Morita type to a self-injective split basic Nakayama algebra 
over $k$. 

In \cite{GabrielRiedtmann}, Gabriel and Riedtmann showed that Brauer tree algebras 
are stably equivalent to symmetric split basic Nakayama algebras. Moreover, Rickard proved in 
\cite[Sect. 4]{Rickard1989} that there is a derived equivalence, and hence by \cite[Sect. 5]{Rickard1991}
a stable equivalence of Morita type, between these algebras. 
Since by \cite{BrauerCyclic,DadeCyclic}, a $p$-modular block of a finite group $G$ with cyclic 
defect groups is a Brauer tree algebra (over a field of characteristic $p$ that is sufficiently large for $G$), our 
results apply in particular to this case; see below.

Note that the assumption that $\Lambda$ is indecomposable is no restriction when one considers
deformation rings of finitely generated indecomposable $\Lambda$-modules. This follows, since if $B$ is an
indecomposable direct factor algebra of $\Lambda$ and $V$ is a $\Lambda$-module that belongs to $B$  
then the versal deformation rings of $V$ viewed either as a $B$-module or as a $\Lambda$-module are 
isomorphic (see Lemma \ref{lem:vdrblocks}).

To state our main results, we need the following definition.

\begin{dfn}
\label{def:needthis}
Let $k$ be a field.
\begin{itemize}
\item[(a)] For every positive integer $e$, let $Q_e$ be the circular quiver 
with $e$ vertices, labeled $1,\ldots,e$, and $e$ arrows, labeled $\alpha_1,\ldots,\alpha_e$, such that 
$\alpha_i: i\to i+1$, where the vertex $e+1$ is identified with 1. Let $\mathcal{J}$ be the ideal of the path algebra $k\,Q_e$ 
generated by all arrows, i.e. by all paths of length 1. 
For all integers $e\ge 1$ and $\ell\ge 2$, define $\mathcal{N}(e,\ell)=k\,Q_e/\mathcal{J}^\ell$.

\item[(b)] For any integer $n\ge 1$, let $N_n$ be the $n\times n$ matrix with entries in the power series algebra 
$k[[t_1,\ldots,t_n]]$ defined by
$$N_n=\left(\begin{array}{ccc|c}0&\cdots&0&t_n\\ \hline &&&t_{n-1}\\&\mathrm{I}_{n-1}&&\vdots\\&&&t_1\end{array}\right)$$
where $\mathrm{I}_{n-1}$ is the $(n-1)\times (n-1)$ identity matrix. In particular, $N_1=(t_1)$.
Let $a\ge 0$ be an integer. If $n\ge 1$, define $J_{n}(a)$ to be the ideal of $k[[t_1,\ldots,t_n]]$ generated by the 
entries in $(N_n)^a$. If $n=0$, define $J_{0}(a)$ to be the zero ideal of $k$.
\end{itemize}
\end{dfn}

It is well-known (see, for example, \cite[p. 243]{GabrielRiedtmann})
that every indecomposable self-injective non-semisimple split basic Nakayama algebra over $k$ 
is isomorphic to $\mathcal{N}(e,\ell)$, as in Definition \ref{def:needthis}(a), 
for appropriate integers $e\ge 1$ and $\ell\ge 2$. 

In our first main result we show that the versal deformation ring of each finitely generated indecomposable non-projective
$\mathcal{N}(e,\ell)$-module is universal, and we describe this ring explicitly using the ideals introduced in
Definition \ref{def:needthis}(b).

\begin{thm}
\label{thm:supermain}
Let $k$ be an arbitrary field, let $e\ge 1$ and $\ell\ge 2$ be integers, and let 
$\mathcal{N}(e,\ell)=k\,Q_e/\mathcal{J}^\ell$ be 
as in Definition $\ref{def:needthis}(a)$.
Write
\begin{equation}
\label{eq:ell}
\ell = \mu\, e + \ell'
\end{equation}
where $\mu,\ell'\ge 0$ are integers and $\ell'\le e-1$. Let $V$ be a finitely generated indecomposable non-projective
$\mathcal{N}(e,\ell)$-module, and define $d_V\ge 0$ to be the distance of $[V]$ from the closest
boundary of the stable Auslander-Reiten quiver of $\mathcal{N}(e,\ell)$, where distance $0$ corresponds to
modules at a boundary. In other words, if $\ell_V=\mathrm{min}\{\mathrm{dim}_k\,V,\ell-\mathrm{dim}_k\,V\}$
then $\ell_V=d_V+1$. Write $\ell_V$ as
\begin{equation}
\label{eq:ellV}
\ell_V = n\, e + i
\end{equation}
where $n,i\ge 0$ are integers with $i\le e-1$. 

The versal deformation ring $R(\mathcal{N}(e,\ell),V)$ is universal.
Moreover, 
$$R(\mathcal{N}(e,\ell),V)\cong k[[t_1,\ldots,t_n]]/J_{n}(m_V)$$
where 
\begin{equation}
\label{eq:mV}
m_V=\left\{\begin{array}{ccl}
\mu&:&0\le i \le \ell',\\
\mu-1&:&\ell'+1\le i \le e-1.\end{array}\right.
\end{equation}
and $J_{n}(m_V)$ is as in Definition $\ref{def:needthis}(b)$.
\end{thm}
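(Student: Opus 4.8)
The plan is to reduce the computation of $R(\mathcal{N}(e,\ell),V)$ to an explicit analysis of the syzygy/cosyzygy structure of the indecomposable modules of $\mathcal{N}(e,\ell)$, exploiting the very rigid representation theory of self-injective Nakayama algebras. First I would recall that the indecomposable non-projective $\mathcal{N}(e,\ell)$-modules are exactly the uniserial modules $V$ with $1\le \dim_k V\le \ell-1$, parametrized by their top (a vertex of $Q_e$) and their length, and that the stable Auslander--Reiten quiver is a tube of rank $e$ and "height'' $\ell-1$; the distance $d_V$ and the numbers $n,i$ from \eqref{eq:ellV} record the position of $[V]$ in this tube. Since $\Omega$ acts on this quiver and $\Omega^{2}$ is (up to a rotation of the circular quiver) the Auslander--Reiten translate, I would first reduce to the case $\dim_k V=\ell_V$, i.e. $V$ lies in the "lower half'' of the tube, using that $R(\Lambda,V)$ depends only on the stable equivalence class and is invariant under $\Omega$ (this is \cite[Sect. 3.2]{blehervelezderived}, already quoted in the excerpt).

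Next I would compute the tangent space $\mathrm{Ext}^1_{\mathcal{N}(e,\ell)}(V,V)$ and identify its dimension with $n$, the number of power series variables. The point is that for a uniserial module of length $\ell_V=ne+i$ over $\mathcal{N}(e,\ell)$, a minimal projective presentation is completely explicit, and $\mathrm{Ext}^1(V,V)$ is a direct sum of copies of $\mathrm{Hom}$-spaces between certain quotients of projectives; a direct count gives $\dim_k\mathrm{Ext}^1(V,V)=n$. This pins down the ambient power series ring $k[[t_1,\dots,t_n]]$ and gives a presentation $R(\mathcal{N}(e,\ell),V)=k[[t_1,\dots,t_n]]/I$ for some ideal $I$; the obstruction theory of \cite[Prop. 2.1]{blehervelez} shows $I$ is contained in the square of the maximal ideal. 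I would then construct an explicit versal lift: build a free $k[[t_1,\dots,t_n]]$-module of rank $\dim_k V$ with a $\mathcal{N}(e,\ell)$-action given by letting the generating arrow act (on a suitable basis) essentially by the companion-type matrix $N_n$ of Definition \ref{def:needthis}(b), with the deformation parameters $t_1,\dots,t_n$ inserted in the last column. The relation $(\text{arrow})^{\ell}=0$ in $\mathcal{N}(e,\ell)$ then forces exactly the relations generated by the entries of $(N_n)^{m_V}$, where the exponent $m_V$ is $\mu$ or $\mu-1$ according to whether the "remainder'' $i$ is $\le\ell'$ or not — this is precisely \eqref{eq:mV}, and it comes from carefully tracking how many full loops around $Q_e$ fit inside length $\ell$ once a block of size $ne+i$ has been used up.

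Finally, I would check universality. By \cite[Thm. 2.6]{blehervelez}, universality is automatic once the stable endomorphism ring $\underline{\mathrm{End}}_{\mathcal{N}(e,\ell)}(V)$ is isomorphic to $k$; for uniserial modules over $\mathcal{N}(e,\ell)$ this can fail when $\ell_V\ge e$ (i.e. $n\ge 1$), so the interesting cases require an argument beyond \cite{blehervelez}. Here the strategy is to show directly that the lift constructed above is versal (every lift over an Artinian $k$-algebra $R$ is a pullback along a homomorphism $k[[t_1,\dots,t_n]]/J_n(m_V)\to R$) \emph{and} that this homomorphism is unique. Uniqueness should follow because the explicit form of $N_n$ — with the identity block $\mathrm{I}_{n-1}$ — rigidifies the lift: any isomorphism of lifts over $R$ fixing the reduction mod $\mathfrak{m}_R$ must be the identity after conjugation, so the classifying map is forced.

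\textbf{Main obstacle.} I expect the hard part to be the last step — proving versality together with uniqueness of the classifying homomorphism when $n\ge 1$, since the stable endomorphism ring is then larger than $k$ and the clean criterion of \cite{blehervelez} no longer applies; one must instead exploit the explicit normal form of $N_n$ to control \emph{all} lifts and \emph{all} isomorphisms between them, not just the infinitesimal ones. A secondary technical nuisance will be the bookkeeping needed to verify that the ideal generated by the entries of $(N_n)^{m_V}$ is exactly the obstruction ideal, including the boundary cases $n=0$ (where $R(\mathcal{N}(e,\ell),V)=k$) and the two ranges of $i$ in \eqref{eq:mV}.
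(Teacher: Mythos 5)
Your outline tracks the paper's strategy closely up through the construction of the explicit lift: the reduction via $\Omega$ (Proposition \ref{prop:generalOmega}), the computation $\dim_k\mathrm{Ext}^1_{\mathcal{N}}(V,V)=n$ for $V=V_{n,i}$ (Lemma \ref{lem:ext1}), the matrix representation with $N_n$ sitting in the distinguished arrow (Definition \ref{def:universallift}), and the appearance of $J_n(m_V)$ from the single long cyclic relation (Lemma \ref{lem:ideal!!!}). So far this is essentially the paper's proof.

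The gap is in the final two steps. First, you assert that the relation $(\text{arrow})^\ell=0$ \emph{forces exactly} the relations generated by the entries of $(N_n)^{m_V}$, as though this were a computation internal to the construction of the lift. But that only shows $R(\mathcal{N},V)$ surjects onto $k[[t_1,\dots,t_n]]/J_n(m_V)$; to conclude that $J_n(m_V)$ is the full obstruction ideal you must rule out the existence of a lift of $\rho_{U,n,i}$ over some intermediate quotient $k[[t_1,\dots,t_n]]/J'$ with $J'\subsetneq J_{n}(m_V)$. The paper does this in Theorem \ref{thm:versallift} by expanding the cyclic matrix product over $J'$ (assuming $(t_1,\dots,t_n)J_{n}(m_V)\subseteq J'$), sorting the cross terms involving the correction matrices $\widetilde{D}_w$ into cases (A)--(E), and using the inequality $\ell_{n,i}\le\lfloor\ell/2\rfloor$ to show every such cross term already dies modulo $J'$; this is not a formality and is where most of the work lies.

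Second, your universality argument rests on the claim that "any isomorphism of lifts over $R$ fixing the reduction mod $\mathfrak{m}_R$ must be the identity after conjugation." This is false. The paper computes the centralizer $Z_\gamma\cap G_R$ explicitly (the analysis culminating in (\ref{eq:final}) and (\ref{eq:cond_c_d})) and finds a positive-dimensional family: block-scalar matrices built from $M(\vec{c})$ and $M'(\vec{c})$, parametrized by a vector $\vec{c}\in R^{\theta(1,n,i)}$ whose first coordinate is a unit congruent to $1$ and whose remaining coordinates lie in $\mathfrak{m}_R$. Thus the centralizer is decidedly nontrivial when $n\ge 1$, which is exactly why $\underline{\mathrm{End}}_{\mathcal{N}}(V)\not\cong k$. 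Universality is instead obtained from the \emph{centralizer lifting property} (Definition \ref{def:centralizerlifting}, Lemma \ref{lem:centralizerlifting}, a reformulation of Schlessinger's $(\mathrm{H}_4)$): one needs $Z(\tau_1)\to Z(\tau_0)$ to be \emph{surjective} along every small extension $A_1\to A_0$, which the explicit parametrization by $\vec{c}$ immediately delivers, since lifting $\vec{c}$ through a surjection $A_1\twoheadrightarrow A_0$ is trivial. So the mechanism is "the centralizer has a uniform explicit description that makes it lift," not "the centralizer is trivial"; your proposal as stated would lead you to attempt to prove something false.
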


Our second main result shows that Theorem \ref{thm:supermain} can be generalized to
indecomposable finite dimensional $k$-algebras $\Lambda$ for which there exists
a stable equivalence of Morita type to a self-injective split basic Nakayama algebra.

\begin{thm}
\label{thm:main2}
Let $k$ be an arbitrary field, and let $\Lambda$ be an indecomposable finite dimensional  $k$-algebra such that there exists
a stable equivalence of Morita type between $\Lambda$ and a  self-injective split basic Nakayama
algebra $\mathcal{N}$ over $k$. Suppose $V$ is a finitely generated indecomposable $\Lambda$-module.

The versal deformation ring $R(\Lambda,V)$ is universal and has the following isomorphism type:

\begin{itemize}
\item[(i)] If $V$ is projective then $R(\Lambda,V)\cong k$.
\item[(ii)] Suppose $V$ is not projective and $\Lambda$ has Loewy length $L=2$. Then $V$ is a simple 
non-projective $\Lambda$-module. If $\mathrm{Ext}^1_\Lambda(V,V)=0$ then $R(\Lambda,V)\cong k$.
If $\mathrm{Ext}^1_\Lambda(V,V)\ne 0$ then $R(\Lambda,V)\cong k[[t]]/(t^2)$.
\item[(iii)] Suppose $V$ is not projective and $\Lambda$ has Loewy length $L\ge 3$. 
Then there exist integers $e\ge 1$ and $\ell\ge 3$ such that $\mathcal{N}\cong\mathcal{N}(e,\ell)$.
Write $\ell = \mu\, e + \ell'$ as in $(\ref{eq:ell})$.
Suppose $[V]$ has distance $d_V\ge 0$ from the closest
boundary of the stable Auslander-Reiten quiver $\Gamma_s(\Lambda)$, where distance $0$ corresponds to
modules at a boundary. Define $\ell_V=d_V+1$.
Write $\ell_V= n\, e + i$ as in $(\ref{eq:ellV})$, and define $m_V$ as in $(\ref{eq:mV})$.
Then $R(\Lambda,V)\cong k[[t_1,\ldots,t_n]]/J_n(m_V)$.
\end{itemize}
\end{thm}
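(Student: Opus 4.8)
\emph{The plan} is to derive Theorem~\ref{thm:main2} from Theorem~\ref{thm:supermain} by transporting deformation-theoretic and Auslander--Reiten data across the stable equivalence of Morita type, so that essentially the only genuinely new work is the translation of the Loewy-length case division into the parameters $e,\ell$ of $\mathcal N$. First I would dispose of (i): a projective $V$ admits over any admissible $R$ only the trivial lift $R\otimes_kV$, so its deformation functor is represented by $k$ and $R(\Lambda,V)\cong k$. From now on take $V$ non-projective. Then $\underline{\mathrm{mod}}\,\Lambda\neq0$, so $\Lambda$ is non-semisimple and, being indecomposable, has no semisimple block; since $\Lambda$ is stably equivalent of Morita type to the self-injective algebra $\mathcal N$, it follows that $\Lambda$ is itself self-injective, so $\mathcal N$ is non-semisimple and $\mathcal N\cong\mathcal N(e,\ell)$ for some $e\ge1$, $\ell\ge2$.

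\emph{Next}, fix a stable equivalence of Morita type $F\colon\underline{\mathrm{mod}}\,\Lambda\to\underline{\mathrm{mod}}\,\mathcal N(e,\ell)$ and let $W$ be the indecomposable non-projective $\mathcal N(e,\ell)$-module corresponding to $V$. Two inputs do the work. By \cite[Sect.~3.2]{blehervelezderived}, a stable equivalence of Morita type between self-injective algebras identifies the deformation functors of $V$ and $W$, so that $R(\Lambda,V)\cong R(\mathcal N(e,\ell),W)$ and $R(\Lambda,V)$ is universal exactly when $R(\mathcal N(e,\ell),W)$ is. And since $F$ preserves almost split sequences up to projective summands, it induces an isomorphism $\Gamma_s(\Lambda)\xrightarrow{\,\sim\,}\Gamma_s(\mathcal N(e,\ell))$ carrying $[V]$ to $[W]$ and preserving the distance of a vertex from the closest boundary, so $d_V=d_W$ and $\ell_V=\ell_W$; hence the integers $n$ and $m_V$ built from $\ell_V$, $e$ and $\ell$ via $(\ref{eq:ellV})$--$(\ref{eq:mV})$ are the same for $V$ and for $W$. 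Applying Theorem~\ref{thm:supermain} to $W$ then yields that $R(\mathcal N(e,\ell),W)$, and hence $R(\Lambda,V)$, is universal and isomorphic to $k[[t_1,\dots,t_n]]/J_n(m_V)$.

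\emph{Finally}, I would settle the case division by showing $L=2\iff\ell=2$. If $L=2$, then $\mathrm{rad}^2\Lambda=0$ and self-injectivity forces every indecomposable projective of $\Lambda$ to be uniserial of length $2$; hence the non-projective indecomposable $\Lambda$-modules are exactly the simple ones (so $V$ is simple), $\Gamma_s(\Lambda)$ reduces to a single $\tau$-orbit of isolated vertices, and comparison with $\Gamma_s(\mathcal N(e,\ell))\cong\mathbb{Z}A_{\ell-1}/\langle\tau^e\rangle$ forces $\ell=2$. Conversely, if $\ell=2$, then $\Gamma_s(\mathcal N(e,2))\cong\mathbb{Z}A_1/\langle\tau^e\rangle$ has no arrows, so for every non-projective indecomposable $M$ the middle term of its almost split sequence has no non-projective summand; since a minimal right almost split epimorphism with projective source is a projective cover, this middle term is $P(M)$ and $M\cong P(M)/\mathrm{rad}P(M)$ is simple, which as before gives $\mathrm{rad}^2\Lambda=0$, i.e.\ $L=2$. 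Thus in case (iii) we have $\ell\ge3$ and the previous paragraph is exactly the assertion; in case (ii) we have $\ell=2$ and $V$ simple, so $\dim_kV=1$, $\ell_V=1$, and the formula specializes to $R(\Lambda,V)\cong k$ when $n=0$ and to $R(\Lambda,V)\cong k[[t_1]]/(t_1^2)$ when $n=1$. Since the minimal number of generators of $R(\Lambda,V)$ equals $\dim_k\mathrm{Ext}^1_\Lambda(V,V)$, these are respectively the cases $\mathrm{Ext}^1_\Lambda(V,V)=0$ and $\mathrm{Ext}^1_\Lambda(V,V)\neq0$.

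\emph{The main obstacle} is not in this argument but in its inputs: the explicit computation of $R(\mathcal N(e,\ell),W)$ in Theorem~\ref{thm:supermain}, and the transport theorem of \cite{blehervelezderived}. Within the present proof, the point requiring the most care is checking that the identification of deformation functors genuinely carries \emph{universality} across $F$ (and not merely the isomorphism type of the versal deformation ring), together with the Loewy-length bookkeeping of the last paragraph.
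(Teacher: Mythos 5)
Your overall plan matches the paper's: use Theorem~\ref{thm:supermain} together with the transport result of \cite[Sect.~3.2]{blehervelezderived} and the preservation of AR-theoretic distances under stable equivalence of Morita type. However, there is a genuine gap right at the outset that your argument does not recover from.

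You assert that since $\Lambda$ is indecomposable, non-semisimple, and stably equivalent of Morita type to the self-injective algebra $\mathcal N$, ``it follows that $\Lambda$ is itself self-injective.'' This is false in general. An indecomposable non-semisimple algebra stably equivalent (even of Morita type) to a self-injective algebra need not be self-injective: Reiten's theorem (\cite[Thm.~2.4]{Reiten}, cited in the paper) only gives self-injectivity under the additional hypothesis that the Loewy length is at least $3$; when the Loewy length is $2$, Reiten's results (\cite[Cor.~1.2, Thm.~2.3]{Reiten}) only give that $\Lambda$ is a Nakayama algebra with $\mathrm{rad}^2\Lambda=0$, which can fail to be self-injective. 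For instance, $kA_2$ (the path algebra of $1\to 2$) is indecomposable, non-semisimple, of Loewy length $2$, and stably equivalent of Morita type to $\mathcal N(1,2)=k[x]/(x^2)$ --- but it is not self-injective. This matters because the entire transport mechanism you invoke from \cite[Sect.~3.2]{blehervelezderived} requires \emph{both} algebras to be self-injective; when $\Lambda$ is not self-injective you cannot conclude $R(\Lambda,V)\cong R(\mathcal N,W)$ or carry universality across. In particular your case (ii) with $\mathrm{Ext}^1_\Lambda(V,V)=0$ — precisely where $\Lambda$ may fail to be self-injective — is unsupported.

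The paper avoids this by splitting the case analysis \emph{before} invoking transport. For $L=2$ it first uses Reiten to get that $\Lambda$ is Nakayama (hence $V$ is simple non-projective); if $\mathrm{Ext}^1_\Lambda(V,V)=0$ it concludes $R(\Lambda,V)\cong k$ directly from Remark~\ref{rem:superlame} without any transport; only when $\mathrm{Ext}^1_\Lambda(V,V)\neq 0$ does it show, from the uniserial structure of the projective cover of $V$ and the indecomposability of $\Lambda$, that $\Lambda$ has a unique simple and a unique indecomposable projective and hence is self-injective, at which point transport to $\mathcal N(1,2)$ is legitimate. For $L\ge 3$ it appeals to \cite[Thm.~2.4]{Reiten} for self-injectivity, as you would also need to. Your $L=2\iff\ell=2$ bookkeeping via $\Gamma_s(\mathcal N(e,\ell))\cong \mathbb Z A_{\ell-1}/\langle\tau^e\rangle$ is a nice observation and works once self-injectivity of $\Lambda$ is actually established, but it cannot substitute for that step. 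To repair the proposal, replace the blanket claim of self-injectivity by the two Reiten citations keyed to Loewy length, and handle the $L=2$, $\mathrm{Ext}^1=0$ subcase by Remark~\ref{rem:superlame} alone.
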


For Brauer tree algebras, and hence in particular for $p$-modular blocks of finite groups with cyclic defect groups,
we obtain the following consequence.

\begin{cor}
\label{cor:supermain}
Let $k$ be an arbitrary field, and let $\Lambda$ be a Brauer tree algebra whose Brauer tree has $e$ edges and
an exceptional vertex of multiplicity $m\ge 1$. 

Suppose $V$ is a finitely generated indecomposable non-projective $\Lambda$-module, 
and suppose $[V]$ has distance $d_V\ge 0$ from the closest
boundary of the stable Auslander-Reiten quiver $\Gamma_s(\Lambda)$. Define $\ell_V=d_V+1$, and write $\ell_V= n\, e + i$ 
where $n,i\ge 0$ are integers with $i\le e-1$.
Define 
$$m_V=\left\{\begin{array}{ccl}
m+1&:&e=1,\\
m&:&e>1\mbox{ and } i\in\{0,1\},\\
m-1&:&e>1 \mbox{ and }2\le i \le e-1.\end{array}\right.$$
Then $R(\Lambda,V)$ is universal and isomorphic to $k[[t_1,\ldots,t_n]]/J_n(m_V)$.
\end{cor}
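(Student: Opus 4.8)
The plan is to deduce this corollary directly from Theorem \ref{thm:main2} by identifying the relevant Nakayama algebra and translating the combinatorial data of the Brauer tree into the parameters $e$ and $\ell$. First I would recall, as cited in the introduction (\cite{GabrielRiedtmann}, \cite{Rickard1989}, \cite{Rickard1991}), that a Brauer tree algebra $\Lambda$ with $e$ edges and exceptional multiplicity $m\ge 1$ is connected by a stable equivalence of Morita type to the symmetric split basic Nakayama algebra $\mathcal{N} = \mathcal{N}(e,\ell)$ with $\ell = m e + 1$; in particular $\Lambda$ is indecomposable and self-injective, so Theorem \ref{thm:main2} applies with this specific $\mathcal{N}$. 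I would also note that the Loewy length of $\Lambda$ equals the Loewy length $\ell = me+1$ of $\mathcal{N}(e,\ell)$ (stable equivalences of Morita type between self-injective algebras preserve this), so the three cases of the corollary correspond exactly to the trichotomy in Theorem \ref{thm:main2}: $e=1$ forces $\ell = m+1$; and when $e>1$ we have $\ell = me+1 \ge 3$ as soon as $m\ge 1$, except we must separately check the degenerate possibility $\ell = 2$.

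Next I would carry out the arithmetic. Writing $\ell = \mu e + \ell'$ as in (\ref{eq:ell}) with $0\le \ell'\le e-1$: when $e>1$ we get $\mu = m$ and $\ell' = 1$, while when $e=1$ we get $\mu = m+1$ and $\ell' = 0$. Feeding these into the definition (\ref{eq:mV}) of $m_V$ from Theorem \ref{thm:main2}: for $e>1$, the index $i$ with $0\le i\le \ell'=1$ means $i\in\{0,1\}$, giving $m_V = \mu = m$, and $\ell'+1 = 2 \le i \le e-1$ gives $m_V = \mu - 1 = m-1$; for $e=1$, only $i=0$ is possible and $i\le \ell'=0$, giving $m_V = \mu = m+1$. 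This matches the three displayed cases in the corollary's definition of $m_V$ exactly, and the description $R(\Lambda,V)\cong k[[t_1,\ldots,t_n]]/J_n(m_V)$ is then immediate from Theorem \ref{thm:main2}(iii).

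The only genuine loose end is the boundary case where Theorem \ref{thm:main2}(ii) rather than (iii) could apply, i.e.\ when $\Lambda$ has Loewy length $2$. This happens precisely when $\ell = me+1 = 2$, forcing $e=1$ and $m=1$. Here I would verify directly that the corollary's formula still holds: with $e=1$ and $m=1$ we have $\ell_V = n\cdot 1 + i$ with $i=0$, so $\ell_V = n$, and the claimed ring is $k[[t_1,\ldots,t_n]]/J_n(m+1) = k[[t_1,\ldots,t_n]]/J_n(2)$. Since $\mathcal{N}(1,2)$ has only one simple module and its indecomposable non-projective modules form a single $\tau$-orbit with $\ell_V = 1$, we have $n=1$, and $J_1(2) = (t_1^2)$ because $N_1 = (t_1)$; so the corollary predicts $k[[t]]/(t^2)$. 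On the other hand Theorem \ref{thm:main2}(ii) gives $k[[t]]/(t^2)$ provided $\mathrm{Ext}^1_\Lambda(V,V)\ne 0$, which holds here since $\Lambda\cong\mathcal{N}(1,2)$ is a local self-injective algebra of Loewy length $2$ and its unique simple module is not projective, so it has a nonsplit self-extension. Thus the two formulas agree, and the proof is complete. The main obstacle, such as it is, is not a deep one: it is simply the bookkeeping of getting the off-by-one shifts in $\ell = me+1$ versus the division algorithm $\ell = \mu e + \ell'$ to line up with the case boundaries $i\in\{0,1\}$ versus $i\ge 2$, together with the care needed at the degenerate vertex $(e,m)=(1,1)$.
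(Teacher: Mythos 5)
Your proposal is correct in its conclusion, but it deviates from the paper's route in a way that introduces an unnecessary subtlety and one genuinely false side claim. The paper deduces the corollary directly from Theorem \ref{thm:supermain} (which covers all $\ell\ge 2$) and \cite[Prop.~3.2.6]{blehervelezderived}, so no case split on Loewy length is needed. You instead route through Theorem \ref{thm:main2}, whose part~(iii) is restricted to Loewy length $L\ge 3$, and this forces you to handle the $L=2$ case separately. Your handling of that degenerate case does work, and your arithmetic translating $\ell=me+1$ into $(\mu,\ell')$ and matching it with~(\ref{eq:mV}) is correct.

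However, the justification you give for knowing which case of Theorem \ref{thm:main2} applies is wrong: stable equivalences of Morita type do \emph{not} preserve Loewy length, and the Loewy length of a Brauer tree algebra is \emph{not} in general $me+1$. For instance, a Brauer tree that is a path with $e=4$ edges, $m=1$, and exceptional vertex at an endpoint has Loewy length $4$, not $me+1=5$. What is true, and what you actually need, is the weaker statement that a Brauer tree algebra has Loewy length $2$ if and only if $(e,m)=(1,1)$. This follows from the explicit description of the projective indecomposables of a Brauer tree algebra (for an edge joining vertices $v,w$ of degrees $d_v,d_w$ and multiplicities $m_v,m_w$, the projective cover has length $d_v m_v + d_w m_w$), and does not require any claim about invariance under stable equivalence. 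Replacing your incorrect appeal to Loewy length preservation with this direct computation closes the gap; alternatively, following the paper and invoking Theorem \ref{thm:supermain} directly avoids the issue entirely, since that theorem already handles $\ell=2$.
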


Note that the case when $\Lambda$ is a Brauer tree algebra and $V$ is a finitely generated $\Lambda$-module
whose stable endomorphism ring is isomorphic to $k$ already follows from the results and methods in \cite{bc}. 

\medskip

Let us now outline the organization of the paper and summarize the main ideas of the proofs of our main results.

In Section \ref{s:prelim}, we give an introduction to versal and universal deformation rings and deformations
of finitely generated modules $V$ for a finite dimensional $k$-algebra $\Lambda$. In particular, we show 
in Lemma \ref{lem:vdrblocks} that if $B$ is an indecomposable direct factor algebra, i.e. a block, of $\Lambda$ 
and $V$ is a $B$-module then the versal deformation rings of $V$ viewed either as a $B$-module or
as a $\Lambda$-module are isomorphic. 
In Proposition \ref{prop:generalOmega}, we prove that if $\Lambda$ is a Frobenius algebra then the first syzygy functor
preserves the versal deformation ring of an arbitrary non-projective finitely generated $\Lambda$-module, 
generalizing a result in \cite{blehervelez}. 

In Section \ref{s:nakayama}, we prove Theorem \ref{thm:supermain}, using the following key steps. 
Suppose $V$ is a finitely generated indecomposable non-projective $\mathcal{N}(e,\ell)$-module. 
By replacing $V$ by $\Omega(V)$, if necessary, we can assume that $\ell_V=\mathrm{dim}_k\,V$. By taking a cyclic permutation of
the vertices $1,\ldots, e$ of the quiver $Q_e$ of $\mathcal{N}(e,\ell)$, if necessary, we can also assume
that the radical quotient of $V$ is isomorphic to the simple $\mathcal{N}(e,\ell)$-module corresponding to the vertex $1$.
Write $\ell_V= n\, e + i$ as in $(\ref{eq:ellV})$, and define $m_V$ as in $(\ref{eq:mV})$. 
We first prove that $\mathrm{Ext}^1_{\mathcal{N}(e,\ell)}(V,V)$ is an $n$-dimensional vector space over $k$
(see Lemma \ref{lem:ext1}) and provide an explicit $k$-basis for $\mathrm{Ext}^1_{\mathcal{N}(e,\ell)}(V,V)$ in terms of
extensions of $V$ by itself (see Lemma \ref{lem:ext2}). We then use this to define a lift of $V$ over the ring 
$R_V=k[[t_1,\ldots,t_n]]/J_n(m_V)$, by providing an explicit matrix representation $\rho_U:\mathcal{N}(e,\ell)\to
\mathrm{Mat}_{\ell_V}(R_V)$ (see Lemma \ref{lem:ideal!!!}). In Theorem \ref{thm:versallift}, we then show that 
$R_V$ is isomorphic to the versal deformation ring $R(\mathcal{N}(e,\ell),V)$ and that $\rho_U$
defines a versal lift of $V$ over $R_V$. 
Finally, in Theorem \ref{thm:universal}, we show that $R(\mathcal{N}(e,\ell),V)$ is universal by proving that the deformation
functor associated to $V$ has the centralizer lifting property (see Definition \ref{def:centralizerlifting} and Lemma 
\ref{lem:centralizerlifting}).

In Section \ref{s:stableMorita}, we first review stable equivalences of Morita type. We then prove Theorem \ref{thm:main2} and 
Corollary \ref{cor:supermain}. For the proof of Theorem \ref{thm:main2}, one of the main ingredients is Reiten's characterization  in \cite{Reiten}
of Artin algebras that are stably equivalent to self-injective algebras. Moreover, we use the results in 
\cite[Sect. 3.2]{blehervelezderived}. For the proof of Corollary \ref{cor:supermain}, we moreover use \cite[Sect. 4]{Rickard1989}
and \cite[Sect. 5]{Rickard1991}.

Part of this paper constitutes the Ph.D. thesis  \cite{wackwitz} of the second author under the supervision of the first author.

\medskip

Unless specifically stated otherwise, all our modules are assumed to be 
finitely generated left modules. In fact, right modules only occur in Remark \ref{rem:josepaper}
and the proof of Proposition \ref{prop:generalOmega} when considering dual modules, in
addition to Section \ref{s:stableMorita} where they occur in the context of bimodules.
We write maps on the left so that the map composition $f\circ g$ means that we apply $f$ after $g$.


\section{Versal and universal deformation rings}
\label{s:prelim}
\setcounter{equation}{0}

Let $k$ be a field of arbitrary characteristic. Let $\hat{\mathcal{C}}$ be the category of 
all complete local commutative Noetherian $k$-algebras $R$ with residue field $k$. For
each such $R$, let $\pi_R:R\to k$ be the corresponding reduction map and let $\mathfrak{m}_R$ denote its unique maximal ideal. The morphisms in 
$\hat{\mathcal{C}}$ are continuous $k$-algebra homomorphisms which induce the identity map on $k$. 
Let $\mathcal{C}$ be the full subcategory of $\hat{\mathcal{C}}$ consisting of Artinian rings.

Suppose $\Lambda$ is a finite dimensional $k$-algebra and $V$ is a finitely generated
$\Lambda$-module. Let $R$ be a ring in $\hat{\mathcal{C}}$, and define $R\Lambda=R\otimes_k\Lambda$.
A \emph{lift} of $V$ over $R$ is a finitely generated $R\Lambda$-module $M$ which
is free over $R$ together with a $\Lambda$-module isomorphism $\phi:k\otimes_R M\to V$. Two lifts 
$(M,\phi)$ and $(M',\phi')$ of $V$ over $R$ are \emph{isomorphic} if there exists an 
$R\Lambda$-module isomorphism  $f:M\to M'$ such that 
$\phi'\circ(k\otimes_R f) = \phi$. The isomorphism class of a lift $(M,\phi)$ of $V$ 
over $R$ is denoted by $[M,\phi]$ and called a \emph{deformation} of $V$ over $R$. 
We define $\mathrm{Def}_{\Lambda}(V,R)$ to be the set of all deformations of $V$ over $R$. 
If $\alpha:R\to R'$ is a morphism in $\hat{\mathcal{C}}$,  we define a map
$$\xymatrix @R=.2pc{
\mathrm{Def}_\Lambda(V,\alpha):&\mathrm{Def}_{\Lambda}(V,R)\ar[r]&\mathrm{Def}_{\Lambda}(V,R')\\
&[M,\phi]\ar@{|->}[r]& [R'\otimes_{R,\alpha}M,\phi_\alpha]}$$
where $\phi_\alpha$ is the composition of $\Lambda$-module homomorphisms
$$k\otimes_{R'}(R'\otimes_{R,\alpha}M) \cong k\otimes_R M\xrightarrow{\phi} V\;.$$
With these definitions $\mathrm{Def}_{\Lambda}(V,-)$ is a 
covariant functor from $\hat{\mathcal{C}}$ to the category of sets.

Alternatively, we can describe $\mathrm{Def}_{\Lambda}(V,-)$ using matrices as follows.
Suppose $\mathrm{dim}_k\,V = n$. By choosing a $k$-basis of $V$ we can identify $V$ with 
$k^n$ and $\mathrm{End}_k(V)$ with $\mathrm{Mat}_n(k)$. The action of $\Lambda$ on $V$ is
then given by a $k$-algebra homomorphism $\rho_V:\Lambda\to \mathrm{Mat}_n(k)$.
Let $R$ be a ring in $\hat{\mathcal{C}}$ and denote the reduction map
$\mathrm{Mat}_n(R)\to \mathrm{Mat}_n(k)$ (resp. $\mathrm{GL}_n(R)\to \mathrm{GL}_n(k)$)
also by $\pi_R$. A \emph{lift} of $\rho_V$ over 
$R$ is a $k$-algebra homomorphism $\tau:\Lambda\to \mathrm{Mat}_n(R)$ such that
$\pi_R\circ\tau = \rho_V$. Since $R\Lambda=R\otimes_k\Lambda$, 
such a lift defines an $R\Lambda$-module action on $M=R^n$,
and with the obvious isomorphism $\phi:k\otimes_RM\to V$ such a lift defines a deformation
$[M,\phi]$ of $V$ over $R$.
Two lifts $\tau,\tau':\Lambda\to \mathrm{Mat}_n(R)$ of $\rho_V$ over $R$ give rise
to the same deformation if and only if they are 
\emph{strictly equivalent} in the sense that there exists an element 
$C$ in the kernel of $\mathrm{GL}_n(R)\xrightarrow{\pi_R} \mathrm{GL}_n(k)$
such that $C\,\tau\,C^{-1}=\tau'$. Denote the strict equivalence class
of $\tau$ by $[\tau]$ and define $\mathrm{Def}_{\Lambda}(\rho_V,R)$ to be the set of all 
strict equivalence classes of lifts of $\rho_V$ over $R$. 
In this way, the choice of a $k$-basis of $V$ gives rise to an identification of
$\mathrm{Def}_{\Lambda}(V,R)$ with $\mathrm{Def}_{\Lambda}(\rho_V,R)$.
In the following, we identify the two functors $\mathrm{Def}_{\Lambda}(V,-)=
\mathrm{Def}_{\Lambda}(\rho_V,-)$.

Let $k[\epsilon]$, where $\epsilon^2=0$, denote the ring of dual numbers over $k$. 
The tangent space of $\mathrm{Def}_{\Lambda}(V,-)$ is defined to be the set
$t_V=\mathrm{Def}_{\Lambda}(V,k[\epsilon])$.

By \cite[Prop. 2.1]{blehervelez}, there is a $k$-vector space isomorphism 
$t_V \cong\mathrm{Ext}^1_{\Lambda}(V,V)$, and the restriction of the functor
$\mathrm{Def}_{\Lambda}(V,-)$ to $\mathcal{C}$ has a pro-representable hull 
$R(\Lambda,V)$ in $\hat{\mathcal{C}}$. This means that there exists 
a deformation $[U(\Lambda,V),\phi_U]$ of $V$ over $R(\Lambda,V)$ 
with the following properties. For each ring $R$ in $\hat{\mathcal{C}}$, the map
$\mathrm{Hom}_{\hat{\mathcal{C}}}(R(\Lambda,V),R) \to \mathrm{Def}_{\Lambda}(V,R)$ 
given by $\alpha \mapsto \mathrm{Def}_{\Lambda}(V,\alpha)([U(\Lambda,V),\phi_U])$ is surjective,
and this map is bijective if $R$ is the ring of dual numbers $k[\epsilon]$.
In particular, this implies that if $\mathrm{dim}_k\,\mathrm{Ext}^1_{\Lambda}(V,V)=r$
then $R(\Lambda,V)$ is isomorphic to a quotient algebra of the power series ring
$k[[t_1,\ldots,t_r]]$, and $r$ is minimal with this
property.

The ring $R(\Lambda,V)$  is called the \emph{versal deformation ring} of $V$ and
$[U(\Lambda,V),\phi_U]$ is called the \emph{versal deformation} of $V$. In general,
the isomorphism type of $R(\Lambda,V)$ is unique up to a (non-canonical) isomorphism.

If $R(\Lambda,V)$ represents $\mathrm{Def}_{\Lambda}(V,-)$, then $R(\Lambda,V)$  is called 
the \emph{universal deformation ring} of $V$ and $[U(\Lambda,V),\phi_U]$ is called the 
\emph{universal deformation} of $V$. In this case, $R(\Lambda,V)$ is unique up to 
a canonical isomorphism. 

By \cite[Prop. 2.1]{blehervelez}, $R(\Lambda,V)$ is always universal 
if the endomorphism ring $\mathrm{End}_\Lambda(V)$ is isomorphic to $k$. The following
easy remark gives another example of a universal deformation ring.

\begin{rem}
\label{rem:superlame}
Suppose $V$ is a finitely generated non-zero $\Lambda$-module such that 
$\mathrm{Ext}^1_\Lambda(V,V)=0$. Then the versal deformation ring $R(\Lambda,V)$ is isomorphic to $k$. 
For each $R\in\mathrm{Ob}(\hat{\mathcal{C}})$, let $\iota_R:k\to R$ be the unique morphism in 
$\hat{\mathcal{C}}$ giving $R$ its $k$-algebra structure. Then 
$\mathrm{Hom}_{\hat{\mathcal{C}}}(k,R)=\{\iota_R\}$, which implies that $k$ is the universal deformation ring of $V$.

In particular, if $P$ is a finitely generated non-zero projective $\Lambda$-module, then the versal deformation
ring $R(\Lambda,P)$ is universal and isomorphic to $k$. 
\end{rem}

Note that $\Lambda$ decomposes into a direct product of indecomposable direct factor algebras, 
also called blocks:
$$\Lambda = B_1\times \cdots\times B_r\,.$$
These blocks correspond to a decomposition $1_\Lambda=e_1+\cdots+e_r$ into a sum of
pairwise orthogonal primitive central idempotents. Recall that this decomposition is unique up to permutation
of the summands. For $1\le i\le r$, we call $e_i$ the block idempotent of $B_i$.
The following result shows that if $B$ is a block of $\Lambda$ and $V$ is a $\Lambda$-module
belonging to $B$, then one can restrict oneself to $B$ when computing the (uni-)versal deformation 
ring $R(\Lambda,V)$. Note that we use $\mathrm{rad}(A)$ to denote the Jacobson radical of a ring $A$.

\begin{lemma}
\label{lem:vdrblocks}
Suppose $B$ is a block of $\Lambda$ with block idempotent $e_B$, and suppose $V$ is a $\Lambda$-module belonging to $B$.
Then $R(\Lambda,V)\cong R(B,V)$ in $\hat{\mathcal{C}}$. Moreover, $R(\Lambda,V)$ is universal if and only if $R(B,V)$ is universal.
\end{lemma}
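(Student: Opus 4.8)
The plan is to reduce everything to elementary facts about modules over a direct product of algebras. First I would use the decomposition $\Lambda = B_1 \times \cdots \times B_r$ and the corresponding decomposition $1_\Lambda = e_1 + \cdots + e_r$ of the identity into pairwise orthogonal primitive central idempotents. Since $V$ belongs to $B = B_i$ for some $i$, we have $e_B V = V$ and $e_j V = 0$ for $j \neq i$. The crucial observation is that for any $R \in \mathrm{Ob}(\hat{\mathcal{C}})$, the element $e_B$, regarded as $1_R \otimes e_B \in R\Lambda = R \otimes_k \Lambda$, is still a central idempotent, and $R\Lambda = RB_1 \times \cdots \times RB_r$ as $R$-algebras, where $RB = e_B \cdot R\Lambda = R \otimes_k B$. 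So the category of $R\Lambda$-modules decomposes accordingly.

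Next I would check that lifts and deformations over $R$ are the same whether computed for $\Lambda$ or for $B$. Given a lift $(M, \phi)$ of $V$ over $R$ as an $R\Lambda$-module, I claim $e_B M = M$: indeed $k \otimes_R (e_B M) = e_B(k \otimes_R M) \cong e_B V = V$ while $k \otimes_R ((1-e_B)M) \cong (1-e_B)V = 0$, and since $M$ is $R$-free and $(1-e_B)M$ is a direct summand of $M$ that reduces to $0$ modulo $\mathfrak{m}_R$, Nakayama's lemma forces $(1-e_B)M = 0$. Hence every lift of $V$ over $R$ as an $R\Lambda$-module is automatically an $RB$-module, and conversely every $RB$-lift is an $R\Lambda$-lift via the projection $R\Lambda \twoheadrightarrow RB$; these two constructions are mutually inverse and respect isomorphisms of lifts, so they give a natural bijection $\mathrm{Def}_\Lambda(V, R) \cong \mathrm{Def}_B(V, R)$. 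Compatibility with the functorial maps $\mathrm{Def}(V,\alpha)$ for $\alpha: R \to R'$ in $\hat{\mathcal{C}}$ is immediate from $R' \otimes_{R,\alpha} (e_B M) = e_B(R' \otimes_{R,\alpha} M)$, so we obtain an isomorphism of deformation functors $\mathrm{Def}_\Lambda(V, -) \cong \mathrm{Def}_B(V, -)$ on all of $\hat{\mathcal{C}}$.

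Finally, I would invoke the defining (pro-)representability properties recalled before the lemma: $R(\Lambda, V)$ is a pro-representable hull of $\mathrm{Def}_\Lambda(V, -)|_{\mathcal{C}}$ and $R(B, V)$ is one for $\mathrm{Def}_B(V, -)|_{\mathcal{C}}$. Since the two functors are isomorphic, their versal hulls are isomorphic in $\hat{\mathcal{C}}$ by the standard uniqueness-up-to-isomorphism of pro-representable hulls, giving $R(\Lambda, V) \cong R(B, V)$. Moreover, $R(\Lambda, V)$ represents $\mathrm{Def}_\Lambda(V, -)$ on $\hat{\mathcal{C}}$ if and only if $R(B, V)$ represents $\mathrm{Def}_B(V, -)$, since these are the same functor up to the isomorphism just constructed; this is exactly the statement that $R(\Lambda, V)$ is universal iff $R(B, V)$ is. I do not expect a serious obstacle here: the only point requiring a little care is the Nakayama-lemma argument showing $e_B M = M$ for a lift $M$, i.e. that a lift cannot secretly involve the other blocks — everything else is formal bookkeeping with the functor isomorphism.
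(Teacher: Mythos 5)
Your proof is correct and follows essentially the same route as the paper: both arguments establish a natural isomorphism of deformation functors $\mathrm{Def}_\Lambda(V,-)\cong\mathrm{Def}_B(V,-)$ by observing that $1_R\otimes e_B$ is a central idempotent of $R\Lambda$ with $e_B(R\Lambda)e_B=RB$, and that Nakayama's lemma forces any $R\Lambda$-lift $M$ of $V$ to satisfy $(1-e_B)M=0$, after which the conclusion follows formally from uniqueness of pro-representable hulls. The only cosmetic difference is that the paper additionally invokes idempotent-lifting machinery to identify $1_R\otimes e_B$ with the unique primitive central idempotent of $R\Lambda$ lying over $e_B$, whereas you bypass primitivity entirely since it is not needed for the argument.
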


\begin{proof}
We fix a ring $R$ in $\hat{\mathcal{C}}$ and let $\mathfrak{m}_R$ be its unique maximal ideal. Define $N$ to be the ideal of $R\Lambda$
generated by $\mathfrak{m}_R$, i.e. $N=\mathfrak{m}_R\Lambda$. By \cite[Props. 5.22 and 6.5]{CR}, $N\subseteq\mathrm{rad}(R\Lambda)$ and
the $R$-algebra $R\Lambda$ is complete in the 
$N$-adic topology. By \cite[Thm. 22.11]{Lam}, we can therefore use the natural map 
\begin{equation}
\label{eq:natural}
R\Lambda\to R\Lambda/N=R\Lambda/\mathfrak{m}_R\Lambda\cong \Lambda
\end{equation}
to lift the primitive central idempotent $e_B$ in $\Lambda$ to a primitive central idempotent $\hat{e}_B$ in $R\Lambda$. 
On the other hand, $1_R\otimes e_B$ is a central idempotent of $R\Lambda=R\otimes_k\Lambda$ that is sent by the map 
(\ref{eq:natural}) to $e_B$. Since by \cite[Thm. 22.11]{Lam}, $1_R\otimes e_B$ is centrally primitive in $R\Lambda$
if and only if its image under the map (\ref{eq:natural}) is centrally primitive in $\Lambda$ and since the primitive
central idempotents of $R\Lambda$ are unique, it follows that $\hat{e}_B=1_R\otimes e_B$. In particular, we have
$$\hat{e}_B(R\Lambda)\hat{e}_B = (1_R\otimes e_B)(R\otimes_k\Lambda)(1_R\otimes e_B) = R\otimes_k (e_B\Lambda e_B) = R(e_B\Lambda e_B)=RB\,.$$
We therefore obtain a well-defined map
\begin{equation}
\label{eq:well}
\xymatrix @R=.2pc{
g_R:&\mathrm{Def}_{B}(V,R)\ar[r]&\mathrm{Def}_{\Lambda}(V,R)\\
&[M,\phi]\ar@{|->}[r]& [M,\phi]}
\end{equation}
which is natural with respect to morphisms $\alpha:R\to R'$ in $\hat{\mathcal{C}}$. Moreover, $g_R$ is injective since
every $R\Lambda$-module homomorphism between $R\Lambda$-modules belonging to
$\hat{e}_B(R\Lambda)\hat{e}_B = RB$ is in particular an $RB$-module homomorphism.

To show that $g_R$ is surjective, let $(M,\phi)$ be a lift of $V$ over $R$ when $V$ is viewed as a $\Lambda$-module. Then 
$M= \hat{e}_BM + (1-\hat{e}_B)M$. Since $(1-e_B)V=0$, it follows that $(1-\hat{e}_B)M\subseteq \mathfrak{m}_R M \subseteq \mathrm{rad}(R\Lambda)\,M$. Since $M$ is a finitely generated 
$R\Lambda$-module, we therefore obtain by Nakayama's Lemma that $M = \hat{e}_B M$.
But this means that $M$ is an $RB$-module, and hence $(M,\phi)$ is a lift of $V$ over $R$ when $V$ is viewed as a $B$-module.

It follows that the bijections $g_R$, for $R$ in $\hat{\mathcal{C}}$, define a natural isomorphism
between the deformation functors $\mathrm{Def}_{B}(V,-)$ and 
$\mathrm{Def}_{\Lambda}(V,-)$.
\end{proof}

By \cite[Thm. 2.6]{blehervelez}, if $\Lambda$ is self-injective 
and the stable endomorphism ring $\underline{\mathrm{End}}_\Lambda(V)$ is isomorphic
to $k$, then $R(\Lambda,V)$ represents $\mathrm{Def}_{\Lambda}(V,-)$ and hence $R(\Lambda,V)$ is a 
universal deformation ring of $V$. Moreover, if $\Lambda$ is a Frobenius algebra and 
$\underline{\mathrm{End}}_\Lambda(V)\cong k$
then $R(\Lambda,\Omega(V))\cong R(\Lambda,V)$ in $\hat{\mathcal{C}}$,
where $\Omega(V)$ is the first syzygy of $V$. In other words, $\Omega(V)$ is the kernel of a 
projective cover  $\psi_V:P(V)\to V$. 

We next want to prove that if $\Lambda$ is a Frobenius algebra then 
we always have that the versal deformation rings $R(\Lambda,V)$ and $R(\Lambda,\Omega(V))$
are isomorphic, even if $\underline{\mathrm{End}}_\Lambda(V)$ is not isomorphic to $k$. The proof is 
considerably more involved in this general case, since we cannot assume that 
$R(\Lambda,V)$ represents $\mathrm{Def}_{\Lambda}(V,-)$.

We first collect some useful facts in Remark \ref{rem:josepaper}, which were proved as Claims 1 and 7
in the proof of \cite[Thm. 2.6]{blehervelez} without any assumption on $\underline{\mathrm{End}}_\Lambda(V)$. 
Note that we need to add the assumption that $M$ (resp. $M_0$) is free over $R$ (resp. $R_0$) in Claims 1 and 2
in the proof of \cite[Thm. 2.6]{blehervelez}. This makes no difference since these claims were only used under this assumption.
As before, $\mathcal{C}$ denotes the full 
subcategory of $\hat{\mathcal{C}}$  consisting of Artinian rings.

\begin{rem}
\label{rem:josepaper}
Suppose $\Lambda$ is a self-injective finite dimensional $k$-algebra.
Let $R,R_0$ be in $\mathcal{C}$, and let $\pi:R\to R_0$ be a surjection in $\mathcal{C}$.
Let $M$, $Q$ (resp. $M_0$, $Q_0$) be finitely generated $R\Lambda$-modules
(resp. $R_0\Lambda$-modules) that are free over $R$ (resp. $R_0$),
and assume that $Q$ (resp. $Q_0$) is projective. 
Suppose there are
$R_0\Lambda$-module isomorphisms $g:R_0\otimes_{R,\pi} M \to M_0$ and
$h:R_0\otimes_{R,\pi} Q\to Q_0$. 
\begin{enumerate}
\item[(i)] 
If $\nu_0\in \mathrm{Hom}_{R_0\Lambda}(M_0, Q_0)$, then there
exists $\nu\in \mathrm{Hom}_{R\Lambda}(M, Q)$ with $\nu_0=
h\circ(R_0\otimes_{R,\pi}\nu)\circ g^{-1}$.
\item[(ii)]
Suppose $\Lambda$ is a Frobenius algebra, and
$P$ is a finitely generated projective left (resp. right) $R\Lambda$-module. 
Then $P^* = \mathrm{Hom}_R(P , R)$ is a projective right (resp. left)
$R\Lambda$-module.
\end{enumerate}
\end{rem}

\begin{prop}
\label{prop:generalOmega}
Let $\Lambda$ be a Frobenius algebra, and let $V$ be a non-projective finitely generated 
$\Lambda$-module. Then $R(\Lambda, V) \cong R(\Lambda, \Omega(V))$
in $\hat{\mathcal{C}}$. Moreover, $R(\Lambda,\Omega(V))$ is universal if and only if $R(\Lambda,V)$ is universal.
\end{prop}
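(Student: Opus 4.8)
The plan is to construct a natural isomorphism between the deformation functors $\mathrm{Def}_\Lambda(V,-)$ and $\mathrm{Def}_\Lambda(\Omega(V),-)$ restricted to the Artinian subcategory $\mathcal{C}$, since a versal deformation ring (and the property of being universal) is determined by the restriction of the deformation functor to $\mathcal{C}$. Fix a projective cover $\psi_V:P(V)\to V$ with kernel $\Omega(V)$, so we have a short exact sequence $0\to\Omega(V)\to P(V)\to V\to 0$ of $\Lambda$-modules. Given $R$ in $\mathcal{C}$ and a lift $(M,\phi)$ of $V$ over $R$, I would form a projective cover $\Psi_M: P_M\to M$ of $M$ as an $R\Lambda$-module; since $\Lambda$ is self-injective (hence Frobenius), $R\Lambda$ is a symmetric $R$-algebra and $P_M$ is free over $R$. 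Reducing mod $\mathfrak{m}_R$ and using $\phi$, the reduction of $P_M$ is a projective cover of $V$, hence isomorphic to $P(V)$; so $\Omega_R(M):=\ker\Psi_M$ is free over $R$ (as the kernel of a surjection of $R$-free modules), and $k\otimes_R\Omega_R(M)\cong\Omega(V)$ canonically via the snake lemma applied to the reduction square. This defines the assignment $(M,\phi)\mapsto(\Omega_R(M),\overline\phi)$, and I would check it sends isomorphic lifts to isomorphic lifts (using that a map of lifts extends to a map of their projective covers, which follows from projectivity and Nakayama), giving a well-defined map $\Omega_R:\mathrm{Def}_\Lambda(V,R)\to\mathrm{Def}_\Lambda(\Omega(V),R)$.

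Next I would establish naturality of $\Omega_R$ with respect to morphisms $\alpha:R\to R'$ in $\mathcal{C}$, i.e. that $R'\otimes_{R,\alpha}\Omega_R(M)$ is isomorphic as a deformation to $\Omega_{R'}(R'\otimes_{R,\alpha}M)$. The point is that $R'\otimes_{R,\alpha}P_M$ is again a projective $R'\Lambda$-module surjecting onto $R'\otimes_{R,\alpha}M$ with the same reduction $P(V)$, hence is a projective cover of $R'\otimes_{R,\alpha}M$; tensoring the defining sequence of $\Omega_R(M)$ with $R'$ stays exact because $\Omega_R(M)$ is $R$-free, and this identifies the two syzygies. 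This is where Remark~\ref{rem:josepaper}(i) is the natural tool: it lets one lift homomorphisms between reductions back up to the modules over $R$, which is exactly what is needed to compare the two projective covers and hence the two syzygies, all without any hypothesis on $\underline{\mathrm{End}}_\Lambda(V)$.

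To produce the inverse natural transformation, I would use that $\Lambda$ is Frobenius: the functor $\Omega^{-1}$ (cosyzygy, i.e. cokernel of an injective envelope) is an inverse of $\Omega$ on the stable category, and since $\Lambda$ is self-injective, injective envelopes of $R$-free $R\Lambda$-modules are projective covers in the opposite sense and again $R$-free; here Remark~\ref{rem:josepaper}(ii) guarantees that the relevant duals of projectives are projective, so the construction of $\Omega^{-1}_R$ over $R$ works just as above. Running the same argument shows $\Omega^{-1}_R$ is natural, and the usual stable-category identities $\Omega\Omega^{-1}\cong\mathrm{id}\cong\Omega^{-1}\Omega$ — lifted to the level of $R$-free modules by splitting off projective summands, which are $R$-free — show $\Omega_R$ and $\Omega^{-1}_R$ are mutually inverse. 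Therefore $\mathrm{Def}_\Lambda(V,-)$ and $\mathrm{Def}_\Lambda(\Omega(V),-)$ are naturally isomorphic on $\mathcal{C}$, so their pro-representable hulls agree, giving $R(\Lambda,V)\cong R(\Lambda,\Omega(V))$ in $\hat{\mathcal{C}}$, and $\mathrm{Def}_\Lambda(V,-)$ is representable on $\mathcal{C}$ if and only if $\mathrm{Def}_\Lambda(\Omega(V),-)$ is, which is the statement about universality.

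The main obstacle I anticipate is the bookkeeping around freeness: at each stage one must verify that the modules produced (syzygies, cosyzygies, the projective covers/envelopes themselves) are genuinely free over $R$ and not merely projective, and that their reductions mod $\mathfrak{m}_R$ are canonically the expected $\Lambda$-modules $\Omega(V)$ or $V$; this is precisely the subtlety flagged in the excerpt's comment that one must add "$M$ is free over $R$" to the cited claims. Handling the comparison of projective covers over $R$ versus over $R'$ — showing the base change of a projective cover is a projective cover — is the technical heart, and Remark~\ref{rem:josepaper}(i) together with Nakayama's lemma is the mechanism I would rely on throughout.
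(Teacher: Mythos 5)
Your overall strategy matches the paper's: build a natural bijection between $\mathrm{Def}_\Lambda(V,-)$ and $\mathrm{Def}_\Lambda(\Omega(V),-)$ on the Artinian subcategory $\mathcal{C}$ and conclude via continuity of the functors. The construction of the forward map $(M,\phi)\mapsto(\Omega_R(M),\Omega(\phi))$ via projective $R\Lambda$-covers, the use of Remark~\ref{rem:josepaper}(i) to compare the various covers, and the role of Remark~\ref{rem:josepaper}(ii) for the duality are all in line with the paper's proof. (A small reduction you omit but the paper does at the outset: strip off projective direct summands of $V$ first, using $R(\Lambda,V)\cong R(\Lambda,V\oplus Q)$.)

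However, there is a genuine gap in the step where you assert that "a map of lifts extends to a map of their projective covers, which follows from projectivity and Nakayama," and again where you say the stable identities $\Omega\Omega^{-1}\cong\mathrm{id}\cong\Omega^{-1}\Omega$ "lifted to the level of $R$-free modules by splitting off projective summands" show $\Omega_R$ and $\Omega^{-1}_R$ are mutually inverse. Projectivity and Nakayama only give you \emph{some} $R\Lambda$-isomorphism $\tilde\mu:Y_1\to Y_2$ between the two syzygies; they do not give you one compatible with the markings $\Omega(\xi_i)$. What Claim 2 in the paper actually has to prove is that although $\Omega(\xi_2)\circ(k\otimes\tilde\mu)$ and $\Omega(\xi_1)$ only agree \emph{modulo maps factoring through a projective $\Lambda$-module}, one can correct $\tilde\mu$ by a term $q_R\circ\iota_1$ (with $q_R$ lifted along the projective $R\Lambda$-module $P_1$) so that the corrected map is an honest isomorphism of lifts. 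That correction step is the technical heart of well-definedness, naturality, and (via duality) injectivity of $g_{\Omega,R}$; it is not automatic from "splitting off projectives," since the issue is not spurious projective summands but rather stable versus strict equality of maps. Your appeal to $\Omega\Omega^{-1}\cong\mathrm{id}$ has exactly this difficulty: a stable isomorphism gives an $R\Lambda$-map between the lift and $\Omega^{-1}_R\Omega_R(M)$ that is only a marked isomorphism up to maps through projectives, and without the Claim-2-type correction you cannot conclude they define the same \emph{deformation}.

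A structural difference worth noting: the paper does not construct $\Omega^{-1}_R$ explicitly. It proves surjectivity of $g_{\Omega,R}$ by directly producing a preimage (embedding a given lift of $\Omega(V)$ into $P_R(V)$ via Remark~\ref{rem:josepaper}(i) and taking the cokernel — morally your cosyzygy), and it proves injectivity by dualizing and applying Claim 2 to right modules via the sequence $0\to L_i^*\to P_R(V)^*\to\Omega_R(L_i)^*\to 0$. This sidesteps any need to develop a theory of relative injective hulls over $R\Lambda$. Your cosyzygy-functor route is morally dual to this, but you would still have to carry out the same Claim-2 argument (on both sides) to get the compatibility with markings, so the hard work is not actually avoided. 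Two small terminological slips: Frobenius implies self-injective, not the other way around (you wrote "self-injective (hence Frobenius)"); and $R\Lambda$ is a Frobenius $R$-algebra when $\Lambda$ is Frobenius over $k$, not necessarily symmetric.
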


\begin{proof}
By \cite[Lemma 3.2.2]{blehervelezderived}, we have 
$R(\Lambda,V)\cong R(\Lambda, V\oplus Q)$ 
for any finitely generated projective
$\Lambda$-module $Q$. Therefore, we assume now for the remainder of the proof that
$V$ has no projective direct summands.

Since $\Lambda$ is a Frobenius algebra, we have
${}_\Lambda\Lambda \cong (\Lambda_\Lambda)^*=
\mathrm{Hom}_k(\Lambda_\Lambda,k)$ as left $\Lambda$-modules
and also $\Lambda_\Lambda \cong ({}_\Lambda\Lambda)^*=
\mathrm{Hom}_k({}_\Lambda\Lambda,k)$ as right $\Lambda$-modules.
This implies, in particular, that $\Lambda$ is injective both as a left and as a 
right module over itself.

Let $T$ be a finitely generated non-zero left (resp. right) 
$\Lambda$-module such that $T$ has no projective direct summands.
We fix a projective cover $\psi_T:P(T)\to T$  of $T$, which means that 
$P(T)$ is a left (resp. right) projective $\Lambda$-module and
$\psi_T$ is an essential epimorphism. Then $\Omega(T)$ is the kernel of
$\psi_T$ and we have a short exact sequence of  left (resp. right)
$\Lambda$-modules
\begin{equation}
\label{eq:syzygy1}
\xymatrix{
0\ar[r]& \Omega(T)\ar[r]^{\iota_T}&P(T)\ar[r]^{\psi_T}&T\ar[r]&0}
\end{equation}
where $\iota_T$ is the inclusion map. 
Since  $\Lambda$ is self-injective, it follows that 
$P(T)$ is also an injective $\Lambda$-module. Note that since we assume 
$T$ has no projective direct summands, the same is true for $\Omega(T)$. 
This means that if we apply $\mathrm{Hom}_k(-,k)$ to the sequence
(\ref{eq:syzygy1}), we obtain a short exact sequence of right (resp. left)
$\Lambda$-modules
\begin{equation}
\label{eq:syzygy2}
\xymatrix{
0\ar[r]& T^*\ar[r]^{\psi_T^*}&P(T)^*\ar[r]^{\iota_T^*}&\Omega(T)^*\ar[r]&0}
\end{equation}
where $P(T)^*$ is a projective right $\Lambda$-module. Moreover, since
$\Omega(T)^*$ and $T^*$ do not have projective direct summands, it
follows that $\iota_T^*:P(T)^*\to \Omega(T)^*$ is a projective cover
of $\Omega(T)^*$. 

Fix an Artinian ring $R$ in $\mathcal{C}$. Let $M$ be a finitely generated
left (resp. right) $R\Lambda$-module that is free as an $R$-module.
Define $M^*=\mathrm{Hom}_R(M,R)$ where the right (resp. left) 
$R\Lambda$-module 
structure is induced by the left (resp. right) $R\Lambda$-module 
structure of $M$.
Define $(k\otimes_RM)^*=\mathrm{Hom}_k(k\otimes_R M,k)$.
There is a $\Lambda$-module isomorphism 
$k\otimes_RM^*\cong (k\otimes_RM)^*$
which is natural with respect to homomorphisms between finitely generated
$R\Lambda$-modules that are free as $R$-modules.

We first prove two claims.

\medskip

\noindent
\textit{Claim $1$}. 
Let $X$ be a finitely generated left (resp. right) $R\Lambda$-module 
that is free as an $R$-module such that there is a $\Lambda$-module isomorphism
$\xi:k\otimes_RX\to T$. Assume that there
is a short exact sequence of left (resp. right) $R\Lambda$-modules
\begin{equation}
\label{eq:ses1}
0\to Y\xrightarrow{\iota} P\xrightarrow{\psi}X\to 0
\end{equation}
such that $P$ is a projective left (resp. right) $R\Lambda$-module with 
$k\otimes_RP\cong P(T)$. Then $\psi$ is an essential epimorphism,
implying that $\psi: P\to X$ is a projective $R\Lambda$-module cover of $X$. 

\medskip

\noindent
\textit{Proof of Claim $1$}. 
Since $k\otimes_RP$ is projective, there exist $\Lambda$-module homomorphisms
$\overline{\lambda}:k\otimes_RP \to P(T)$ and $\overline{\mu}:k\otimes_R Y\to \Omega(T)$ making the following diagram of left (resp. right)
$\Lambda$-modules commutative:
\begin{equation}
\label{eq:first}
\xymatrix{
0\ar[r]&k\otimes_R Y\ar[r]^{k\otimes\iota}\ar[d]_{\overline{\mu}}& k\otimes_RP\ar[r]^{k\otimes\psi} \ar[d]_{\overline{\lambda}}&k\otimes_RX\ar[r]\ar[d]^{\xi}&0\\
0\ar[r]&\Omega(T)\ar[r]^{\iota_T} & P(T)\ar[r]^{\psi_T}&T\ar[r]&0.
}
\end{equation}
Since $\psi_T$ is an essential epimorphism, it follows that 
$\overline{\lambda}$ is surjective, and hence bijective
because $k\otimes_RP$ and $P(T)$ have the same $k$-dimension.
This implies that $\overline{\lambda}$ and $\overline{\mu}$ are 
$\Lambda$-module isomorphisms. In particular, this means that
$k\otimes\psi$ is an essential epimorphism. 
But then it follows from Nakayama's Lemma that $\psi$ is also an
essential epimorphism, which proves Claim 1.

\medskip

\noindent
\textit{Claim $2$}. 
For $i=1,2$, let $X_i$ be a finitely generated left (resp. right) $R\Lambda$-module that
is free as an $R$-module such that there is a $\Lambda$-module isomorphism
$\xi_i:k\otimes_RX_i\to T$. Assume that there
is a short exact sequence of left (resp. right) $R\Lambda$-modules
$$0\to Y_i\xrightarrow{\iota_i} P_i\xrightarrow{\psi_i}X_i\to 0$$
such that $P_i$ is a projective left (resp. right) $R\Lambda$-module with $k\otimes_RP_i\cong P(T)$. Suppose $\Xi_i:k\otimes_RP_i\to P(T)$ 
and $\Omega(\xi_i):k\otimes_RY_i\to \Omega(T)$ are
$\Lambda$-module homomorphisms such that there
is a commutative diagram 
\begin{equation}
\label{eq:needthis!}
\xymatrix{
0\ar[r]&k\otimes_RY_i\ar[r]^{k\otimes\iota_i} \ar[d]_{\Omega(\xi_i)}& 
k\otimes_RP_i\ar[r]^{k\otimes\psi_i}\ar[d]_{\Xi_i}&k\otimes_RX_i\ar[r]\ar[d]^{\xi_i}&0\\
0\ar[r]&\Omega(T)\ar[r]^{\iota_T} & P(T)\ar[r]^{\psi_T}&T\ar[r]&0
}
\end{equation}
of $\Lambda$-modules.
Then $\Xi_i$ and $\Omega(\xi_i)$
are $\Lambda$-module isomorphisms.
If there exists an $R\Lambda$-module isomorphism
$\nu:X_1\to X_2$ such that $\xi_2\circ(k\otimes \nu)=\xi_1$, 
then there exists an $R\Lambda$-module isomorphism
$\mu:Y_1\to Y_2$ with $\Omega(\xi_2)\circ(k\otimes \mu)=\Omega(\xi_1)$. In particular, this is true for any choice
of $\Omega(\xi_i)$, $i=1,2$, that makes the diagram in (\ref{eq:needthis!})
commutative.

\medskip

\noindent
\textit{Proof of Claim $2$}. 
We prove this for left modules.
Since $P_1$ is projective, there exist $R\Lambda$-module homomorphisms
$\tilde{\lambda}:P_1\to P_2$ and $\tilde{\mu}:Y_1\to Y_2$ making the following diagram of 
left $R\Lambda$-modules commutative
\begin{equation}
\label{omega1'}
\xymatrix{
0\ar[r]&Y_1\ar[r]^{\iota_1}\ar[d]_{\tilde{\mu}}& P_1\ar[r]^{\psi_1} \ar[d]_{\tilde{\lambda}}&X_1\ar[r]\ar[d]^{\nu}&0\\
0\ar[r]&Y_2\ar[r]^{\iota_2} & P_2\ar[r]^{\psi_2}&X_2\ar[r]&0.
}
\end{equation}
Since $\psi_2$ is an essential epimorphism by Claim 1, it follows that $\tilde{\lambda}$ is surjective, and hence
bijective because $P_1$ and $P_2$ are free $R$-modules of the same finite rank. 
This implies that $\tilde{\lambda}$ and $\tilde{\mu}$ are $R\Lambda$-module isomorphisms.

As in (\ref{eq:first}), we see that  for $i=1,2$, there exist $\Lambda$-module homomorphisms
$\Xi_i:k\otimes_RP_i\to P(T)$ and $\Omega(\xi_i):k\otimes_RY_i\to \Omega(T)$ such that we
obtain a commutative diagram as in (\ref{eq:needthis!}). On the other hand, if $\Xi_i$ and $\Omega(\xi_i)$ are
any $\Lambda$-module homomorphisms in a commutative diagram (\ref{eq:needthis!}), then
it follows, as in (\ref{eq:first}), that $\Xi_i$ and $\Omega(\xi_i)$ are $\Lambda$-module isomorphisms.

Since $\Lambda$ is injective as a left module over itself, 
$\Omega$ defines an auto\-equivalence of $\Lambda$-\underline{mod}. 
Because $\xi_2\circ (k\otimes \nu)
=\xi_1$ in $\Lambda$-mod, this implies that 
$$\Omega(\xi_2)\circ (k\otimes \tilde{\mu})=\Omega(\xi_1)$$
in $\Lambda$-\underline{mod}. 
This means that there exists a $\Lambda$-module homomorphism 
$p:k\otimes_RY_1\to k\otimes_RY_2$ such that $p$ factors through a
projective $\Lambda$-module and 
$$\Omega(\xi_2)\circ (k\otimes \tilde{\mu}+p)=\Omega(\xi_1)$$
in $\Lambda$-mod. 
Since finitely generated projective $\Lambda$-modules are injective, 
$p$ factors through $k\otimes\iota_1$, 
say $p=q\circ (k\otimes\iota_1)$ for some $\Lambda$-module homomorphism 
$q:k\otimes_RP_1\to k\otimes_RY_2$. 
Because $P_1$ is a projective $R\Lambda$-module, there exists
an $R\Lambda$-module homomorphism $q_R:P_1\to Y_2$ such that
$k\otimes q_R=q$. Hence we obtain a commutative diagram of $R\Lambda$-modules
\begin{equation}
\label{omega3}
\xymatrix{
0\ar[r]&Y_1\ar[r]^{\iota_1}\ar[d]_{\tilde{\mu}+q_R\circ\iota_1} & P_1
\ar[r]^{\psi_1}\ar[d]_{\tilde{\lambda}+\iota_2\circ q_R} &X_1\ar[r]\ar[d]^{\nu}&0\\
0\ar[r]&Y_2\ar[r]^{\iota_2} & P_2\ar[r]^{\psi_2}&X_2\ar[r]&0
}
\end{equation}
such that 
$$\Omega(\xi_2)\circ (k\otimes (\tilde{\mu}+q_R\circ\iota_1))=\Omega(\xi_1)\;.$$
Since $\psi_2$ is an essential epimorphism by Claim 1, we can argue as above to see
that $\mu=\tilde{\mu}+q_R\circ\iota_1$ is an $R\Lambda$-module isomorphism.
Because $\Omega(\xi_2)\circ (k\otimes \mu)=\Omega(\xi_1)$, this proves Claim 2.

\medskip

To prove Proposition \ref{prop:generalOmega}, we follow the strategy in 
\cite[Sect. 3.6]{velezthesis}. As we said at the beginning of the proof,
we may assume that $V$ has no projective direct summands, so that Claims
1 and 2 apply to both $T=V$ and $T=\Omega(V)^*$. Note that we can
use sequence (\ref{eq:syzygy2}) with $T=V$ in lieu of sequence 
(\ref{eq:syzygy1}) with $T=\Omega(V)^*$. In particular, we let
$P(\Omega(V)^*)=P(V)^*$ and $\psi_{\Omega(V)^*}=\iota_V^*$, which
implies $\Omega(\Omega(V)^*)=V^*$ and $\iota_{\Omega(V)^*}=\psi_V^*$.

Fix an Artinian ring $R$ in $\mathcal{C}$, and
let $\eta:P_R(V)\to P(V)$ be a projective $R\Lambda$-module cover of $P(V)$. 
In other words, $P_R(V)$ is a projective $R\Lambda$-module and $\eta$ is an essential epimorphism. Equivalently,
there exists a $\Lambda$-module isomorphism 
$\bar{\eta}:k\otimes_RP_R(V)\to P(V)$ and
$\eta=\bar{\eta}\circ p_{P_R(V)}$ where $p_{P_R(V)}:P_R(V)\to k\otimes_R P_R(V)$ 
sends $x\in P_R(V)$ to $1\otimes x$.

If $(M,\phi)$ is a lift of $V$ over $R$, then
there exists an $R\Lambda$-module homomorphism $\psi_M:P_R(V)\to M$ such that
$\psi_M:P_R(V)\to M$ is a projective $R\Lambda$-module cover of $M$. Define $\Omega_R(M)$
to be the kernel of $\psi_M$. 
As in (\ref{eq:needthis!}) in Claim 2, we have a commutative diagram of $\Lambda$-modules
\begin{equation}
\label{eq:omega1}
\xymatrix{
0\ar[r]&k\otimes_R\Omega_R(M)\ar[r]\ar[d]_{\Omega(\phi)} & k\otimes_R P_R(V)
\ar[r]^{k\otimes\psi_M}\ar[d]_{\Phi_M} &k\otimes_RM\ar[r]\ar[d]^{\phi}&0\\
0\ar[r]&\Omega(V)\ar[r]^{\iota_V} & P(V)\ar[r]^{\psi_V}&V\ar[r]&0
}
\end{equation}
where $\Phi_M$ and $\Omega(\phi)$ are $\Lambda$-module isomorphisms.
Hence $(\Omega_R(M),\Omega(\phi))$ is a lift of $\Omega(V)$ over $R$. 
By Claim 2, we obtain a well-defined map
\begin{equation}
\label{eq:omega5}
\xymatrix @R=.2pc{
g_{\Omega,R}:&\mathrm{Def}_{\Lambda}(V,R)\ar[r]&\mathrm{Def}_{\Lambda}(\Omega(V),R)\\
&[M,\phi]\ar@{|->}[r]& [\Omega_R(M),\Omega(\phi)]\;.}
\end{equation}
Let $\alpha:R\to R'$ be a morphism in $\mathcal{C}$, and
consider the induced lift $(R'\otimes_{R,\alpha}M,(\phi)_\alpha)$ of $V$ over $R'$.
Then
it follows from Claim 2 that 
we have an isomorphism
$$(\Omega_{R'}(R'\otimes_RM),\Omega((\phi)_\alpha))\cong
(R'\otimes_{R,\alpha}\Omega_R(M),(\Omega(\phi))_\alpha)$$
as lifts of $\Omega(V)$ over $R'$. This proves that $g_{\Omega,R}$ is natural with respect 
to morphisms $\alpha:R\to R'$ in $\mathcal{C}$. 

We next prove that $g_{\Omega,R}$ in (\ref{eq:omega5}) is surjective. Let $(U,\rho)$ be a lift
of $\Omega(V)$ over $R$. Since $\rho:k\otimes_RU\to \Omega(V)$ is a $\Lambda$-module
isomorphism and since $k\otimes_RP_R(V)\cong P(V)$, 
it follows from Remark \ref{rem:josepaper}(i)
that there exists an $R\Lambda$-module homomorphism 
$\varphi:U\to P_R(V)$ such that we have a commutative diagram of $R\Lambda$-modules
\begin{equation}
\label{eq:omega9}
\xymatrix{
0\ar[r]&U\ar[r]^{\varphi}\ar[d]_{\rho\circ p_U} & P_R(V)
\ar[r]^{\pi}\ar[d]_{\eta} &\mathrm{Coker}(\varphi)\ar[r]\ar[d]^{\zeta}&0\\
0\ar[r]&\Omega(V)\ar[r]^{\iota_V} & P(V)\ar[r]^{\psi_V}&V\ar[r]&0
}
\end{equation}
where $p_U:U\to k\otimes_R U$ sends $u\in U$ to $1\otimes u$,
$\pi$ is the natural projection and $\zeta$ is the $R\Lambda$-module homomorphism induced 
by $\eta$. Note that since $U$ and $P_R(V)$ are free $R$-modules of finite rank and 
since $\iota_V$
is injective, it follows by Nakayama's Lemma that $\varphi$ is also injective.
Moreover, by lifting bases from $k$ to $R$,
we see that
$\mathrm{Coker}(\varphi)$ is a free $R$-module. 
Tensoring the top row of (\ref{eq:omega9}) with $k$ over $R$, we see that $\zeta$ induces a
$\Lambda$-module isomorphism $\bar{\zeta}: k\otimes_R\mathrm{Coker}(\varphi) \to V$.
Hence $(\mathrm{Coker}(\varphi),\bar{\zeta})$ is a lift of $V$ over $R$.
By Claim 1, $\pi:P_R(V)\to \mathrm{Coker}(\varphi)$ is an essential 
epimorphism.
This implies
that $g_{\Omega,R}([\mathrm{Coker}(\varphi),\bar{\zeta}]) = [U,\rho]$, proving that $g_{\Omega,R}$ is 
surjective.

Finally, we show that $g_{\Omega,R}$ in (\ref{eq:omega5}) is injective. Let $(L_1,\phi_1)$, $(L_2,\phi_2)$ be lifts of $V$ over
$R$ such that 
$(\Omega_R(L_1),\Omega(\phi_1))$ and $(\Omega_R(L_2),\Omega(\phi_2))$ are isomorphic
as lifts of $\Omega(V)$ over $R$. Let $f:\Omega_R(L_1)\to \Omega_R(L_2)$ be an $R\Lambda$-module
isomorphism such that 
$$\Omega(\phi_2)\circ (k\otimes f)=\Omega(\phi_1)\,.$$ 
Let $i=1,2$. Note that $(L_i^*,(\phi_i^*)^{-1})$ is a lift of 
$V^*$ over $R$, and $(\Omega_R(L_i)^*,(\Omega(\phi_i)^*)^{-1})$
is a lift of $\Omega(V)^*$ over $R$. 
We have a short exact sequence of right $R\Lambda$-modules
$$0\to L_i^*\xrightarrow{\psi_{L_i}^*} P_R(V)^*
\xrightarrow{\iota_{L_i}^*} \Omega_R(L_i)^*\to 0$$
where $P_R(V)^*$ is a projective right $R\Lambda$-module, by Remark
\ref{rem:josepaper}(ii), and $k\otimes_R P_R(V)^*\cong P(V)^*$.
Moreover, $(f^*)^{-1}: \Omega_R(L_1)^* \to \Omega_R(L_2)^*$
is an isomorphism of right $R\Lambda$-modules satisfying
$$(\Omega(\phi_2)^*)^{-1}\circ (k\otimes (f^*)^{-1}) =(\Omega(\phi_1)^*)^{-1}\;.$$ 
Therefore, 
it follows from Claim 2
that there exists an isomorphism 
$\tilde{h}:L_2^*\to L_1^*$ of right $R\Lambda$-modules such that 
\begin{equation}
\label{eq:neceq}
(\phi_2^*)^{-1}\circ (k\otimes \tilde{h}^{-1}) = (\phi_1^*)^{-1}\,.
\end{equation}
In other words, $(L_1^*,(\phi_1^*)^{-1})$ and $(L_2^*,(\phi_2^*)^{-1})$
are isomorphic lifts of $V^*$ over $R$.

For each finitely generated left $R\Lambda$-module $M$ that is free as an $R$-module, 
let $\delta_M:M\to M^{**}$ be the $R\Lambda$-module isomorphism
given by evaluation. Note that
if $N$ is another finitely generated $R\Lambda$-module that is free as an $R$-module and
$\lambda:M\to N$ is an $R\Lambda$-module homomorphism, then
$\lambda^{**}=\delta_N\circ \lambda\circ (\delta_M)^{-1}$.
Similarly, we can define a $\Lambda$-module isomorphism 
$\delta_V:V\to V^{**}$. Define
$$h=(\delta_{L_2})^{-1}\circ \tilde{h}^*\circ \delta_{L_1}:\quad L_1\to L_2\;.$$
Using (\ref{eq:neceq}), it follows that $h$ is an $R\Lambda$-module isomorphism such that $\phi_2\circ (k\otimes h)=\phi_1$.
Hence $(L_1,\phi_1)$ and $(L_2,\phi_2)$ are isomorphic lifts of $V$ over $R$, proving 
that $g_{\Omega,R}$ is injective. 

It follows that the syzygy functor $\Omega$ induces a natural isomorphism
between the restrictions of the deformation functors $\mathrm{Def}_{\Lambda}(V,-)$ and 
$\mathrm{Def}_{\Lambda}(\Omega(V),-)$ to $\mathcal{C}$.
Since the deformation functors $\mathrm{Def}_{\Lambda}(V,-)$ and 
$\mathrm{Def}_{\Lambda}(\Omega(V),-)$ are continuous by \cite[Prop. 2.1]{blehervelez}, 
this proves Proposition \ref{prop:generalOmega}.
\end{proof}

We next give a necessary and sufficient criterion for a versal
deformation ring to be universal. To state the result, we need the following definition
(see, for example, \cite[Def. 2.5]{bcillinois}).

\begin{dfn}
\label{def:centralizerlifting}
Let $\Lambda$ be an arbitrary finite dimensional $k$-algebra and let $V$ be an arbitrary finitely generated
$\Lambda$-module. 
As at the beginning of Section \ref{s:prelim}, choose a $k$-basis of $V$, and let
$\rho_V:\Lambda\to \mathrm{Mat}_n(k)$ be the $k$-algebra homomorphism giving the
action of $\Lambda$ on $V$ with respect to this basis.

Let $\alpha:A_1\to A_0$
be a small extension in $\mathcal{C}$, which means that $\alpha$ is a 
surjective morphism in $\mathcal{C}$ and its kernel is a non-zero principal
ideal $(t)$ of $A_1$ such that $\mathfrak{m}_{A_1}\cdot t=0$. For $j\in\{0,1\}$ define
$$G_{A_j}=\mathrm{Ker}\left(\mathrm{GL}_n(A_j)\xrightarrow{\pi_{A_j}} \mathrm{GL}_n(k)\right)\;.$$
Note that $\alpha$ induces a surjective homomorphism $G_{A_1}\to G_{A_0}$.
Suppose $\tau_1:\Lambda\to\mathrm{Mat}_n(A_1)$ is a lift of $\rho_V$ over $A_1$, and define
$\tau_0=\alpha\circ\tau_1$. For $j\in\{0,1\}$ define
$$Z(\tau_j)=\{T_i\in G_{A_j}\;|\;T_j\,\tau_j=\tau_j\,T_j\}\;.$$
We say the deformation functor $\mathrm{Def}_{\Lambda}(V,-)=\mathrm{Def}_{\Lambda}(\rho_V,-)$
has the \emph{centralizer lifting property} if for all small extensions $\alpha:A_1\to A_0$
in $\mathcal{C}$ and for all lifts $\tau_1,\tau_0$ as above, the natural homomorphism
$$Z(\tau_1)\to Z(\tau_0)$$
induced by $\alpha$
is surjective. Note that the surjectivity of this map only depends on the strict equivalence
class $[\tau_1]$ and the ring homomorphism $\alpha$ but not on the choice of representative
$\tau_1$ in $[\tau_1]$.
\end{dfn}

Using Schlessinger's criterion $(\HH_4)$ (see \cite[Sect. 2]{Sch}) together with the fact that 
$\mathrm{Def}_{\Lambda}(V,-)$ is continuous by \cite[Prop. 2.1]{blehervelez}, 
the following result is proved similarly to 
\cite[Lemma 1]{maz} (see also \cite[Thm. 2.7(ii)]{bcillinois}).

\begin{lemma}
\label{lem:centralizerlifting}
Let $\Lambda$ and $V$ be as in Definition $\ref{def:centralizerlifting}$.
The versal deformation ring $R(\Lambda,V)$ is universal if and only if the
deformation functor $\mathrm{Def}_{\Lambda}(V,-)$ has the centralizer lifting property.
\end{lemma}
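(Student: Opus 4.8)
The plan is to establish the equivalence in Lemma~\ref{lem:centralizerlifting} by unwinding what it means for the versal hull $R(\Lambda,V)$ to actually pro-represent $\mathrm{Def}_\Lambda(V,-)$, and matching this against Schlessinger's criterion $(\HH_4)$. Recall that $\mathrm{Def}_\Lambda(V,-)$ always has a versal hull, so by Schlessinger's theorem (see \cite[Sect. 2]{Sch}) the functor satisfies $(\HH_1)$, $(\HH_2)$, $(\HH_3)$, and it is representable (i.e. $R(\Lambda,V)$ is universal) precisely when it additionally satisfies $(\HH_4)$: for every small extension $\alpha:A_1\to A_0$ in $\mathcal{C}$, the canonical map
$$\mathrm{Def}_\Lambda(V,A_1\times_{A_0}A_1)\longrightarrow \mathrm{Def}_\Lambda(V,A_1)\times_{\mathrm{Def}_\Lambda(V,A_0)}\mathrm{Def}_\Lambda(V,A_1)$$
is a bijection (it is automatically surjective by $(\HH_2)$, so the content is injectivity). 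Since $\mathrm{Def}_\Lambda(V,-)$ is continuous by \cite[Prop. 2.1]{blehervelez}, it suffices to check $(\HH_4)$ on Artinian rings, which is exactly the setting of Definition~\ref{def:centralizerlifting}.

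First I would translate $(\HH_4)$ into the matrix language of Definition~\ref{def:centralizerlifting}. Fix a small extension $\alpha:A_1\to A_0$ with kernel $(t)$, and recall $\mathrm{Def}_\Lambda(V,R)=\mathrm{Def}_\Lambda(\rho_V,R)$ is the set of $G_R$-orbits of lifts $\tau:\Lambda\to\mathrm{Mat}_n(R)$ of $\rho_V$, where $G_R=\mathrm{Ker}(\mathrm{GL}_n(R)\to\mathrm{GL}_n(k))$. Given two lifts $\tau_1,\tau_1'$ over $A_1$ that become strictly equivalent over $A_0$ and agree there with a fixed $\tau_0$, one forms the lift over the fibre product $A_1\times_{A_0}A_1$ and asks whether the pair $([\tau_1],[\tau_1'])$ lifts to a single strict equivalence class over $A_1\times_{A_0}A_1$. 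A direct diagram chase shows this reduces to the following: if $C\in G_{A_1}$ satisfies $\alpha(C)\in Z(\tau_0)$, i.e. $\alpha(C)$ commutes with $\tau_0$, can one correct $C$ by an element of $\mathrm{Ker}(G_{A_1}\to G_{A_0})$ so that the result commutes with $\tau_1$? Because the kernel of $G_{A_1}\to G_{A_0}$ consists of matrices $\mathrm{I}+tX$ with $X\in\mathrm{Mat}_n(k)$ and $\mathfrak{m}_{A_1}t=0$, the obstruction to such a correction is precisely a class in $\mathrm{Ext}^1_\Lambda(V,V)$ (via the identification $t_V\cong\mathrm{Ext}^1_\Lambda(V,V)$ of \cite[Prop. 2.1]{blehervelez}), and it vanishes for all choices exactly when the map $Z(\tau_1)\to Z(\tau_0)$ is surjective. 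This is the standard argument from \cite[Lemma 1]{maz}; I would follow it essentially verbatim, replacing the group algebra $kG$ there by the arbitrary finite dimensional $k$-algebra $\Lambda$, which causes no difficulty since the argument only uses the matrix description of lifts and the identification of the tangent space with $\mathrm{Ext}^1$.

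Concretely the key steps, in order, are: (1) invoke Schlessinger to reduce universality to $(\HH_4)$ on $\mathcal{C}$, using continuity of $\mathrm{Def}_\Lambda(V,-)$ to pass from $\hat{\mathcal{C}}$ to $\mathcal{C}$; (2) for a small extension $\alpha:A_1\to A_0$, express the fibre-product map of $(\HH_4)$ in terms of strict equivalence classes of matrix lifts; (3) show that injectivity of this map for the pair $(\tau_1,\tau_0)$ is equivalent to the surjectivity of $Z(\tau_1)\to Z(\tau_0)$, by reducing the ``gluing'' question to lifting a centralizing matrix modulo the ideal $(t)$ and identifying the obstruction with an element of $\mathrm{Ext}^1_\Lambda(V,V)$; (4) observe that the condition ``for all small extensions and all lifts'' on the $(\HH_4)$ side matches verbatim the centralizer lifting property, noting as in Definition~\ref{def:centralizerlifting} that surjectivity of $Z(\tau_1)\to Z(\tau_0)$ depends only on $[\tau_1]$ and $\alpha$. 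The main obstacle is step~(3): one has to be careful that the equivalence is genuinely two-sided, i.e. that \emph{failure} of centralizer lifting for some $\tau_1$ produces an honest failure of injectivity in $(\HH_4)$, which requires checking that a non-liftable element of $Z(\tau_0)$ actually yields two lifts over $A_1\times_{A_0}A_1$ restricting to the \emph{same} pair over the factors but lying in \emph{distinct} strict equivalence classes. This is handled by the explicit first-order computation with $\mathrm{I}+tX$, and is precisely where the hypothesis that $\alpha$ is a \emph{small} extension (so that $\mathfrak{m}_{A_1}t=0$) is used to linearize everything and land in $\mathrm{Ext}^1_\Lambda(V,V)$.
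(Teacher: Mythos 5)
Your proof takes essentially the same route as the paper, which only sketches the argument by citing Schlessinger's criterion $(\HH_4)$, continuity of the deformation functor, and \cite[Lemma 1]{maz} (see also \cite[Thm. 2.7(ii)]{bcillinois}); your elaboration of the diagram chase translating $(\HH_4)$ on Artinian rings into the centralizer lifting property is the standard argument from those references. One small slip: surjectivity of the fibre-product map in $(\HH_4)$ comes from Schlessinger's $(\HH_1)$, not $(\HH_2)$.
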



\section{Self-injective split basic Nakayama algebras}
\label{s:nakayama}
\setcounter{equation}{0}

The goal of this section is to prove Theorem \ref{thm:supermain}.
Let $k$ be an arbitrary field. Recall that a finite dimensional $k$-algebra $\Lambda$ is called
a \emph{Nakayama algebra} if both the indecomposable
projective and the indecomposable injective $\Lambda$-modules are uniserial.
If $\mathrm{rad}(\Lambda)$ denotes the 
Jacobson radical of $\Lambda$, then $\Lambda$ is said to be \emph{split basic} over $k$ if 
$\Lambda/\mathrm{rad}(\Lambda)$ is isomorphic to a direct product of copies of $k$.

Let $e\ge 1$ and $\ell \ge 2$ be integers, and let $Q_e$, $\mathcal{J}$ and $\mathcal{N}(e,\ell)$ be as 
in Definition \ref{def:needthis}(a). In other words, $Q_e$ is the circular quiver with $e$ 
vertices, labeled $1,\ldots,e$, and $e$ arrows,
labeled $\alpha_1,\ldots,\alpha_e$, such that $\alpha_v: v\to v+1$, for $1\le v\le e-1$, and 
$\alpha_e:e\to 1$:
\begin{center}
\begin{tikzpicture}

\def \n {5}
\def \radius {1}
\def \margin {15} 

\foreach \s in {1,2,\n}
{
  \draw[->, black,thick] ({360/\n * (\s - 1)+\margin}:\radius) 
    arc ({360/\n * (\s - 1)+\margin}:{360/\n * (\s)-\margin}:\radius);
}

\node at (-2,0) {$Q_e=$};
\node at (1,0) {1};
\node at (.309,.951) {2};
\node at (-.809,.588) {3};
\node at (.309,-.951) {$e$};
\node at (1.2,.6) {$\alpha_1$};
\node at (-.3,1.2) {$\alpha_2$};
\node at (1.1,-.7) {$\alpha_e$};

\draw[->,thick, dashed] (-.940,.342) arc(160:274:1);

\end{tikzpicture} 
\end{center}
Moreover, $\mathcal{J}$ is the ideal of $k\,Q_e$ generated by all arrows and
$$\mathcal{N}(e,\ell) =k\,Q_e/\mathcal{J}^\ell\;.$$
For $1\le j\le e$, let $S_j$ be the simple $\mathcal{N}(e,\ell)$-module corresponding to
the vertex $j$. Write 
$$\ell=\mu\, e + \ell'$$ 
as in (\ref{eq:ell}),
where $\mu,\ell'\ge 0$ are integers and $\ell'\le e-1$.
Since we assume $\ell\ge 2$, it follows in the case when $e=1$ that $\mu\ge 2$.

The algebra $\mathcal{N}(e,\ell)$ is an indecomposable split basic Nakayama algebra over $k$.
The projective indecomposable $\mathcal{N}(e,\ell)$-modules $P_1,\ldots,P_e$ and the
injective indecomposable $\mathcal{N}(e,\ell)$-modules $E_1,\ldots,E_e$
are uniserial of length $\ell$ such that, for $1\le j\le e$, 
$P_j/\mathrm{rad}(P_j)\cong S_j$ and $\mathrm{soc}(E_j)\cong S_j$.
In other words, the descending composition factors of $P_j$, for $1\le j\le e$,
are given by the sequence of $\ell$ simple $\mathcal{N}(e,\ell)$-modules
$$S_j,S_{j+1},\ldots,S_e,S_1,\ldots,S_{j-1},S_j,\ldots,S_{j-1},S_j,\ldots,\ldots,S_{j-1},S_j,\ldots,S_{j-1+\ell'}$$
where $S_j$ occurs $\mu$ (resp. $\mu+1$) times as a composition factor
when $\ell'=0$ (resp. $\ell'\ge 1$). Note that
$$P_j\cong E_{j-1+\ell'}$$
where we take indices modulo $e$. In particular, 
$\mathcal{N}(e,\ell)$ is a Frobenius algebra for all $e\ge 1$ and all $\ell \ge 2$,
and $\mathcal{N}(e,\ell)$ is a symmetric algebra if and only if $\ell'=1$. 

It is well-known (see, for example, \cite[p. 243]{GabrielRiedtmann})
that every indecomposable self-injective non-semisimple split basic Nakayama algebra over $k$ 
is isomorphic to $\mathcal{N}(e,\ell)$ for appropriate integers $e\ge 1$ and $\ell\ge 2$. 

For the remainder of this section, fix integers $e\ge 1$ and $\ell\ge 2$, and define
\begin{equation}
\label{eq:Nsimplify}
\mathcal{N}=\mathcal{N}(e,\ell)\;.
\end{equation}
There are 
precisely $e\cdot \ell$ isomorphism classes of indecomposable 
$\mathcal{N}$-modules. A representative of each such isomorphism class is uniquely determined by its
top radical quotient, which we will call its top, and its length. 
In the following, we will concentrate on indecomposable $\mathcal{N}$-modules 
whose top is isomorphic to $S_1$. 

\begin{dfn}
\label{def:notation}
Let $0\le n\le \mu$ and $0\le i\le e-1$ be integers, and define 
$\ell_{n,i}=n\,e+i$. Assume $\ell_{n,i}\le \ell$. If $\ell_{n,i}=0$, define $V_{0,0}=0$. Now suppose $\ell_{n,i}\ge 1$.

Define $V_{n,i}$ to be an indecomposable $\mathcal{N}$-module with
$\mathrm{top}(V_{n,i})=S_1$ and $\mathrm{dim}_k\,V_{n,i}=\ell_{n,i}$. Then $V_{n,i}$ is
unique up to isomorphism. 
The descending composition factors of $V_{n,i}$ are given by the sequence of $\ell_{n,i}$ simple 
$\mathcal{N}$-modules
$$S_1,S_2,\ldots,S_e,S_1,\ldots,S_e,S_1,\ldots,\ldots,S_e,S_1,\ldots,S_{i}$$
where each of $S_{i+1},\ldots,S_e$ occurs $n$ times and, if $i\ge 1$, each of $S_1,\ldots,S_i$ occurs $n+1$ times.
Define
\begin{equation}
\label{eq:delta}
\theta({v,n,i})=\left\{\begin{array}{ccl}n+1&:&1\le v\le i,\\
n&:&i+1\le v \le e.\end{array}\right.
\end{equation}
We fix a representation of $V_{n,i}$
$$\rho_{n,i}:\qquad\mathcal{N}\longrightarrow \mathrm{Mat}_{\ell_{n,i}}(k)$$
as follows. 

If $n=0$ then $\ell_{n,i}=i<e$. In this case, $\rho_{n,i}(v)$ (resp. $\rho_{n,i}(\alpha_v)$) is the zero matrix for $i+1\le v\le e$.
Moreover, for $1\le v\le i$,
\begin{equation}
\label{eq:rho00}
\rho_{n,i}(v) = \left(\begin{array}{cccc}\delta_{v,1}&0&\cdots&0\\ 0&\delta_{v,2}&\;\ddots&\vdots\\
\vdots&\ddots&\ddots&0\\ 0&\cdots&0&\delta_{v,i}\end{array}\right),\qquad
\rho_{n,i}(\alpha_v) = \left(\begin{array}{ccccc}0&\cdots&\cdots&\cdots&0\\
\delta_{v,1}&0&&&\vdots\\ 0&\delta_{v,2}&\;\ddots&&\vdots\\
\vdots&\ddots&\ddots&\;\ddots&\vdots\\ 0&\cdots&0&\delta_{v,i-1}&0\end{array}\right)
\end{equation}
where $\delta_{v,j}$ is the Kronecker delta.

If $n\ge 1$
then $\ell_{n,i}\ge e$. In this case,
for $1\le v\le e$, $\rho_{n,i}(v)$ and $\rho_{n,i}(\alpha_v)$ are $e\times e$ block matrices
\begin{eqnarray}
\label{eq:rho11vertices}
\rho_{n,i}(v) &=& \left(\begin{array}{cccc}\delta_{v,1}\,\mathrm{I}_{\theta({1,n,i})}&0&\quad\cdots\quad&0\\ 
0&\delta_{v,2}\,\mathrm{I}_{\theta({2,n,i})}&\;\ddots&\vdots\\
\vdots&\ddots&\ddots&0\\ 0&\cdots&0&\delta_{v,e}\,\mathrm{I}_{\theta({e,n,i})}\end{array}\right),\\[3ex]
\label{eq:rho11}
\rho_{n,i}(\alpha_v) &=& \left(\begin{array}{ccccc}0&0&\quad\cdots\quad&0&\delta_{v,e}A_{e,n,i}\\[1ex]
\delta_{v,1}A_{1,n,i}&0&&&0\\[1ex] 0&\delta_{v,2}A_{2,n,i}&\;\ddots&&\vdots\\[2ex]
\vdots&\ddots&\;\ddots&0&0\\[1ex] 0&\cdots&0&\delta_{v,e-1}A_{e-1,n,i}&0\end{array}\right)
\end{eqnarray}
where 
$A_{v,n,i}$ is a $\theta({v+1,n,i})\times \theta({v,n,i})$ matrix for $1\le v\le e-1$ and
$A_{e,n,i}$ is a $\theta({1,n,i})\times \theta({e,n,i})$ matrix. Moreover,
\begin{equation}
\label{eq:Avni}
A_{v,n,i}=\left\{\begin{array}{ccl}
\mathrm{Id}_{\theta({v+1,n,i})\times \theta({v,n,i})}^{\mathrm{max}}&:&1\le v\le e-1\;,\\[2ex]
\mathrm{Id}_{\theta({1,n,i})\times\theta({e,n,i})}^{\theta({1,n,i})-1}&:&v=e\;,
\end{array}\right.
\end{equation}
where $\mathrm{Id}_{x\times y}^{r}$ denotes the $x\times y$ matrix of rank $r$ of the form
\begin{equation}
\label{eq:Id}
\mathrm{Id}_{x\times y}^r=\left(\begin{array}{lc} {0} & {0}\\ \mathrm{I}_{r} &{0}\end{array}\right)
\qquad\mbox{if $1\le r\le \mathrm{min}(x,y)$,}
\end{equation}
$\mathrm{Id}_{x\times y}^{\mathrm{max}}=\mathrm{Id}_{x\times y}^{\mathrm{min}(x,y)}$ and
$\mathrm{Id}_{x\times y}^0$ is the $x\times y$ zero matrix.

For $n\ge 0$,
we denote the $k$-basis of $V_{n,i}$ with respect to which we obtain the matrix representation $\rho_{n,i}$ 
by
\begin{equation}
\label{eq:basis}
\mathcal{B}_{n,i}=\{\vec{b}_{n,i,v,w}\;|\; 1\le v\le e, \;1\le w\le \theta(v,n,i)\}\;,
\end{equation}
where $\vec{b}_{n,i,v,w}$ is the column vector of length $\ell_{n,i}$ whose entry at the coordinate $w+\sum_{u=1}^{v-1}\theta(u,n,i)$  is 1 and
whose all other entries are 0. Note that
$\mathcal{B}_{0,i}=\{\vec{b}_{n,i,v,1}\;|\; 1\le v\le i\}$.
\end{dfn}

\begin{lemma}
\label{lem:ext1}
Let $0\le n\le \mu$ and $0\le i\le e-1$ be integers such that $\ell_{n,i}=n\,e+i$ satisfies $1\le \ell_{n,i}\le \lfloor \ell/2 \rfloor$.
Let $V_{n,i}$ be as in Definition $\ref{def:notation}$.
Then $\mathrm{Ext}^1_{\mathcal{N}}(V_{n,i},V_{n,i})\cong \mathrm{Hom}_\mathcal{N}(\Omega(V_{n,i}),V_{n,i})$ and
$\mathrm{dim}_k\mathrm{Ext}^1_{\mathcal{N}}(V_{n,i},V_{n,i})=n$.
\end{lemma}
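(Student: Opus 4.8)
The first isomorphism $\mathrm{Ext}^1_{\mathcal{N}}(V_{n,i},V_{n,i})\cong \mathrm{Hom}_\mathcal{N}(\Omega(V_{n,i}),V_{n,i})$ should follow from the standard long exact sequence argument for self-injective algebras. Applying $\mathrm{Hom}_\mathcal{N}(-,V_{n,i})$ to the short exact sequence $0\to \Omega(V_{n,i})\to P(V_{n,i})\to V_{n,i}\to 0$ gives an exact sequence whose relevant piece is $\mathrm{Hom}_\mathcal{N}(P(V_{n,i}),V_{n,i})\to \mathrm{Hom}_\mathcal{N}(\Omega(V_{n,i}),V_{n,i})\to \mathrm{Ext}^1_\mathcal{N}(V_{n,i},V_{n,i})\to \mathrm{Ext}^1_\mathcal{N}(P(V_{n,i}),V_{n,i})=0$. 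So $\mathrm{Ext}^1_\mathcal{N}(V_{n,i},V_{n,i})$ is the cokernel of $\mathrm{Hom}_\mathcal{N}(P(V_{n,i}),V_{n,i})\to \mathrm{Hom}_\mathcal{N}(\Omega(V_{n,i}),V_{n,i})$, which is by definition $\underline{\mathrm{Hom}}_\mathcal{N}(\Omega(V_{n,i}),V_{n,i})$. I would then observe that since $\Omega(V_{n,i})$ has no projective summands (as $\ell_{n,i}\le \lfloor\ell/2\rfloor < \ell$ forces $\Omega(V_{n,i})$ to be indecomposable non-projective, being a submodule of the uniserial module $P(V_{n,i})$ of length $\ell-\ell_{n,i}\le \lfloor\ell/2\rfloor$), no nonzero homomorphism $\Omega(V_{n,i})\to V_{n,i}$ can factor through a projective: indeed the image of such a map would be a quotient of a projective lying inside $V_{n,i}$, but $V_{n,i}$ has length $\le\lfloor\ell/2\rfloor<\ell$ so it contains no nonzero projective submodule, and a nonzero factor-through-projective map $\Omega(V_{n,i})\to V_{n,i}$ would have to have image a nonzero projective (since maps between uniserial modules have uniserial, hence here too-short-to-be-projective, images unless... — this needs care). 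So $\underline{\mathrm{Hom}}_\mathcal{N}(\Omega(V_{n,i}),V_{n,i})=\mathrm{Hom}_\mathcal{N}(\Omega(V_{n,i}),V_{n,i})$, giving the first claimed isomorphism.

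For the dimension count, I would use the explicit structure of uniserial modules over $\mathcal{N}$. The projective cover $P(V_{n,i})$ is the uniserial projective $P_1$ of length $\ell$ with top $S_1$, so $\Omega(V_{n,i})$ is the unique uniserial submodule of $P_1$ of length $\ell-\ell_{n,i}$; its top is $S_{1+\ell_{n,i}}$ (indices mod $e$), i.e.\ $S_{1+i}$ since $\ell_{n,i}\equiv i \pmod e$. Now for uniserial modules $X,Y$ over a Nakayama algebra, $\dim_k\mathrm{Hom}_\mathcal{N}(X,Y)$ equals the number of composition factors $S_j$ of $Y$ isomorphic to $\mathrm{top}(X)$ such that the uniserial quotient of $Y$ "starting" at that occurrence of $S_j$ has length $\ge \dim_k X$ — equivalently, the number of submodules of $Y$ that are quotients of $X$. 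So I need to count occurrences of $S_{1+i}$ among the descending composition factors of $V_{n,i}$ such that the corresponding uniserial quotient of $V_{n,i}$ has length at least $\ell-\ell_{n,i}=\dim_k\Omega(V_{n,i})$. Reading off from the composition series $S_1,S_2,\ldots,S_e,S_1,\ldots$ (ending at $S_i$) listed in Definition \ref{def:notation}, the copies of $S_{1+i}$ occur at depths $i, \,i+e,\, i+2e,\ldots$ — that is, $\theta(1+i,n,i)$ of them if $1\le i\le e-1$ wait, one must be careful: the top occurrence $S_{1+i}$ appears at position... Let me instead count via $\Omega$: the copies of the top of $\Omega(V_{n,i})$, namely $S_{1+i}$, appearing in $V_{n,i}$, for which the down-set quotient is long enough. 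This should come out to exactly $n$; I would verify it directly from the composition-factor list.

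\textbf{Main obstacle.} The delicate point is the bookkeeping in the $\mathrm{Hom}$-dimension count: correctly identifying $\mathrm{top}(\Omega(V_{n,i}))$, the length $\ell-\ell_{n,i}$, and then counting the occurrences of that simple among the composition factors of $V_{n,i}$ that sit "high enough" to admit a length-$(\ell-\ell_{n,i})$ uniserial quotient — and checking the boundary behavior when $i=0$ (where $\mathrm{top}(\Omega(V_{n,i}))=S_1=\mathrm{top}(V_{n,i})$) versus $i\ge 1$. One must also confirm that the hypothesis $\ell_{n,i}\le\lfloor\ell/2\rfloor$ (so $\ell-\ell_{n,i}\ge\ell_{n,i}$) is exactly what makes the count equal $n$ rather than something larger: it guarantees each relevant occurrence contributes, and prevents $\Omega(V_{n,i})$ from being projective. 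I expect the argument that no nonzero map $\Omega(V_{n,i})\to V_{n,i}$ factors through a projective to need a short separate justification using that both modules are uniserial of length $<\ell$ while every nonzero quotient of a projective $\mathcal{N}$-module of composition-length $<\ell$ is non-projective and hence cannot "pass through" — more precisely, if $\Omega(V_{n,i})\xrightarrow{f} P' \xrightarrow{g} V_{n,i}$ with $P'$ projective, then $\mathrm{im}(g|_{\mathrm{im} f})$ is a uniserial submodule of $V_{n,i}$, necessarily of length $<\ell$, hence non-projective; but then since $P'$ is injective (self-injective $\mathcal{N}$) one can push the obstruction around — I'd phrase this cleanly using that the stable category sees $\underline{\mathrm{Hom}}$ directly, so really the content is just the first-paragraph identification $\mathrm{Ext}^1\cong\underline{\mathrm{Hom}}(\Omega V, V)$ plus showing $\underline{\mathrm{Hom}}=\mathrm{Hom}$ here because $\mathrm{Hom}(\Omega V_{n,i}, V_{n,i})$ already contains no maps through projectives, the latter because the only way a map between these short uniserials factors through a projective is if it is zero.
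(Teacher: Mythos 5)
Your plan follows the paper's proof: identify $\mathrm{Ext}^1_\mathcal{N}(V_{n,i},V_{n,i})$ with $\underline{\mathrm{Hom}}_\mathcal{N}(\Omega(V_{n,i}),V_{n,i})$ using self-injectivity, observe that no nonzero map $\Omega(V_{n,i})\to V_{n,i}$ factors through a projective, and count $\dim_k\mathrm{Hom}_\mathcal{N}(\Omega(V_{n,i}),V_{n,i})$ from the uniserial structure. Two quick points on the pieces you flagged as needing care: (1) for the factoring-through-projectives claim, any such map extends (projectives being injective) to some $g\colon P_1=P(V_{n,i})\to V_{n,i}$; since $\ker(g)$ has length $\ell-r$ with $r\le\ell_{n,i}$, it contains the unique submodule of $P_1$ of length $\ell-\ell_{n,i}$, namely $\Omega(V_{n,i})$, so $g$ restricts to zero there; (2) the bookkeeping in the count collapses once you use $\dim_k\Omega(V_{n,i})\ge\dim_k V_{n,i}$, which makes every submodule of $V_{n,i}$ with top $S_{i+1}$ automatically a quotient of $\Omega(V_{n,i})$, so the dimension is just the multiplicity of $S_{i+1}$ in $V_{n,i}$, i.e.\ $\theta(i+1,n,i)=n$ (and without that inequality the count could only drop below $n$, not exceed it).
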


\begin{proof}
Since $\mathcal{N}$ is self-injective, it follows that
$$\mathrm{Ext}^1_{\mathcal{N}}(V_{n,i},V_{n,i})\cong\underline{\mathrm{Hom}}_\mathcal{N}(\Omega(V_{n,i}),V_{n,i})$$
as $k$-vector spaces.
Note that $\Omega(V_{n,i})$ is an indecomposable $\mathcal{N}$-module 
with $\mathrm{dim}_k\,\Omega(V_{n,i})=\ell-\ell_{n,i}$ and $\mathrm{top}(\Omega(V_{n,i}))=S_{i+1}$.
Since $\mathrm{dim}_k\,\Omega(V_{n,i})\ge \mathrm{dim}_k\,V_{n,i}$ and since both $V_{n,i}$ and $\Omega(V_{n,i})$ 
are uniserial, it follows that $\mathrm{dim}_k\,\mathrm{Hom}_\mathcal{N}(\Omega(V_{n,i}),V_{n,i})$
equals the multiplicity of $S_{i+1}$ as a composition factor of $V_{n,i}$. Since this number is equal to $n$
and since none of the $\mathcal{N}$-module homomorphisms from $\Omega(V_{n,i})$ to $V_{n,i}$ factors through a 
projective $\mathcal{N}$-module, the result follows.
\end{proof}

For $n\ge 1$ (which implies $\mu\ge 1$) and $\ell_{n,i}\le \lfloor \ell/2 \rfloor$, we need an explicit description of a $k$-basis of
$\mathrm{Ext}^1_{\mathcal{N}}(V_{n,i},V_{n,i})$ in terms of short exact sequences. We use the following definitions.

\begin{dfn}
\label{def:maps}
Suppose $M=M(a,\theta_M)$ is an indecomposable $\mathcal{N}$-module with $\mathrm{top}(M)=S_a$ such that
the multiplicity of $S_a$ as a composition factor of $M$ is $\theta_M$.
Fix an element $z_M\in M$ such that $z_M\not\in \mathrm{rad}(M)$ and
$\alpha_v\, z_M=0$ for $v\in\{1,\ldots,e\}-\{a\}$. We call $z_M$ a top element of $M$.
Since $M$ is uniserial, every $\mathcal{N}$-module homomorphism $\beta$ with domain $M$ is uniquely determined by
$\beta(z_M)$. 

Suppose  $0\le n\le \mu$ and $0\le i\le e-1$ are integers such that $\ell_{n,i}=n\,e+i\le \ell$.
Let $V_{n,i}$ be as in Definition \ref{def:notation}. Define $\theta(a,0,0)=0$. 
If $\ell_{n,i}\ge 1$, then $\theta(a,n,i)$ from (\ref{eq:delta}) is the multiplicity of
$S_a$ as a composition factor of $V_{n,i}$. For $0\le j\le \mathrm{min}(\theta(a,n,i),\theta_M)$, define
$$\beta({M,V_{n,i},j}):\quad M \to V_{n,i}$$
to be the $\mathcal{N}$-module homomorphism such that $\beta({M,V_{n,i},0})$ sends $z_M$ to $0$ and
$\beta({M,V_{n,i},j})$ sends $z_M$ to $\vec{b}_{n,i,a,\theta(a,n,i)-j+1}$ for $1\le j\le \mathrm{min}(\theta(a,n,i),\theta_M)$.

For $0\le c,d\le\mu$, define
$$\beta_{c,d}:\qquad V_{c,i}\to V_{d,i}$$
to be $\beta_{c,d} = \beta({V_{c,i},V_{d,i},\mathrm{min}(\theta(1,c,i),\theta(1,d,i))})$ where $z_{V_{c,i}}=\vec{b}_{c,i,1,1}$.
In particular, if $c=d$ then $\beta_{c,d}$ is the identity morphism, if
$c> d$ then $\beta_{c,d}$ is the natural projection from $V_{c,i}$ onto $V_{d,i}$, and
if $c<d$ then $\beta_{c,d}$ is the natural inclusion of $V_{c,i}$ into $V_{d,i}$.
\end{dfn}

\begin{dfn}
\label{def:extsequences}
Let $1\le n\le \mu$ and $0\le i\le e-1$ be integers such that $\ell_{n,i}=n\,e+i$ satisfies $\ell_{n,i}\le \lfloor \ell/2 \rfloor$.
For $s\in\{1,\ldots,n\}$, define a short exact sequence of $\mathcal{N}$-modules
$$\mathcal{E}_s:\qquad0 \rightarrow V_{n,i} \xrightarrow{\;\iota_s\;} 
V_{n+s,i} \oplus V_{n-s,i} \xrightarrow{\;\pi_s\;} V_{n,i} \rightarrow 0$$
where 
$$\iota_s=\left(\begin{array}{r}\beta_{n,n+s}\\-\beta_{n,n-s}\end{array}\right)\;, \qquad
\pi_s=\left(\begin{array}{cc}\beta_{n+s,n}&\beta_{n-s,n}\end{array}\right)$$
and
$\beta_{n,n+s}$, $\beta_{n,n-s}$, $\beta_{n+s,n}$, $\beta_{n-s,n}$ are as in Definition \ref{def:maps}.
\end{dfn}

\begin{lemma}
\label{lem:ext2}
Let $1\le n\le \mu$ and $0\le i\le e-1$ be integers such that $\ell_{n,i}=n\,e+i$ satisfies $\ell_{n,i}\le \lfloor \ell/2 \rfloor$.
The short exact sequences $\mathcal{E}_1,\ldots,\mathcal{E}_n$ from Definition
$\ref{def:extsequences}$ define $k$-linearly independent elements, and hence a 
$k$-basis, of $\mathrm{Ext}_\mathcal{N}^1(V_{n,i},V_{n,i})$.
\end{lemma}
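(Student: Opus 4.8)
The plan is to show that the $n$ short exact sequences $\mathcal{E}_1,\ldots,\mathcal{E}_n$ are $k$-linearly independent in the Yoneda $\mathrm{Ext}$-group; since by Lemma \ref{lem:ext1} we have $\dim_k\mathrm{Ext}^1_{\mathcal{N}}(V_{n,i},V_{n,i})=n$, linear independence immediately forces them to be a basis. It therefore suffices to verify two things: first, that each $\mathcal{E}_s$ is in fact a non-split short exact sequence, and second, that no nontrivial $k$-linear combination of the classes $[\mathcal{E}_s]$ is split.

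First I would check exactness and non-splitness of each $\mathcal{E}_s$ directly. Exactness of
$$0\to V_{n,i}\xrightarrow{\iota_s} V_{n+s,i}\oplus V_{n-s,i}\xrightarrow{\pi_s} V_{n,i}\to 0$$
is a dimension count together with the observation that $\iota_s$ is injective (its first component $\beta_{n,n+s}$ is the inclusion of $V_{n,i}$ into $V_{n+s,i}$, hence injective, provided $n+s\le\mu$, which holds because $\ell_{n,i}\le\lfloor\ell/2\rfloor$ forces $n+s\le\mu$) and that $\pi_s\circ\iota_s=\beta_{n+s,n}\circ\beta_{n,n+s}-\beta_{n-s,n}\circ\beta_{n,n-s}$ equals $\mathrm{id}_{V_{n,i}}-\mathrm{id}_{V_{n,i}}=0$ by the composition rules recorded at the end of Definition \ref{def:maps}; surjectivity of $\pi_s$ follows since its first component $\beta_{n+s,n}$ is the natural projection, hence surjective. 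For non-splitness: a splitting would be a map $V_{n+s,i}\oplus V_{n-s,i}\to V_{n,i}$ that is a retraction of $\iota_s$. Using that every $\mathcal{N}$-module map out of a uniserial module is determined by the image of a top element and the explicit description of all homomorphisms $\beta(M,V_{n,i},j)$ from Definition \ref{def:maps}, one checks that the composite $V_{n,i}\hookrightarrow V_{n+s,i}\xrightarrow{\mathrm{any\ map}}V_{n,i}$ always lands in $\mathrm{rad}(V_{n,i})$ when $s\ge 1$ — because $V_{n,i}$ sits inside $\mathrm{rad}^{\,se}(V_{n+s,i})$, so any homomorphic image of it in $V_{n,i}$ lies in $\mathrm{rad}^{\,se}(V_{n,i})=0$ when $se\ge\ell_{n,i}$, or more carefully lands deep enough in the radical filtration that it cannot hit $z_{V_{n,i}}$ — so no retraction can exist.

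For the linear independence of the classes I would argue analogously with a Baer sum. Suppose $\sum_{s=1}^n c_s[\mathcal{E}_s]=0$ in $\mathrm{Ext}^1$ with $c_s\in k$ not all zero; let $s_0$ be the smallest index with $c_{s_0}\ne 0$. The Baer sum is represented by a sequence $0\to V_{n,i}\to E\to V_{n,i}\to 0$, and splitness means there is a map $\sigma\colon V_{n,i}\to E$ splitting it. Pulling $\sigma$ back through the explicit pullback/pushout construction of the Baer sum of the $\mathcal{E}_s$ and projecting onto the $V_{n+s_0,i}$-component, one extracts from $\sigma$ a homomorphism $V_{n,i}\to V_{n+s_0,i}$ whose composite with the inclusion-coordinate must be the identity on $V_{n,i}$ up to contributions of the strictly-larger-index terms, which live in a strictly deeper layer of the radical filtration of $V_{n+s_0,i}$; since $\mathrm{Hom}_{\mathcal{N}}(V_{n,i},V_{n+s_0,i})$ is spanned by $\beta(V_{n,i},V_{n+s_0,i},j)$ for $0\le j\le\theta(1,n,i)$ and the only such map composing to a unit is forced by degree reasons not to be available, we reach a contradiction. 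Concretely, I would organize this by choosing a filtration-degree (``depth'') function on the modules and showing the index $s$ exactly records the depth at which $V_{n,i}$ is glued in, so that the sequences are ``layered'' and cannot cancel.

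The main obstacle I anticipate is bookkeeping rather than conceptual: making the depth/radical-layer argument for non-splitness of an arbitrary combination $\sum c_s[\mathcal{E}_s]$ fully rigorous, since one must track how the Baer sum interleaves the various $V_{n\pm s,i}$ and verify that the homomorphism spaces $\mathrm{Hom}_{\mathcal{N}}(V_{n,i},V_{n\pm s,i})$ are too small, in the relevant filtration degrees, to produce a retraction. An alternative, possibly cleaner, route would be to pass through the isomorphism $\mathrm{Ext}^1_{\mathcal{N}}(V_{n,i},V_{n,i})\cong\underline{\mathrm{Hom}}_{\mathcal{N}}(\Omega(V_{n,i}),V_{n,i})$ of Lemma \ref{lem:ext1}, compute the connecting map sending each $\mathcal{E}_s$ to an explicit element $\beta(\Omega(V_{n,i}),V_{n,i},j_s)$ with pairwise distinct parameters $j_s$ (these are visibly $k$-linearly independent, being among the basis maps of Definition \ref{def:maps}), and conclude. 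I would likely present the proof via this second route, as it reduces everything to the already-catalogued homomorphisms between uniserial modules.
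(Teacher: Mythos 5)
Your preferred (second) route is exactly what the paper does: the paper constructs a commutative diagram identifying the class of $\mathcal{E}_s$ with $\omega_s=\beta(\Omega(V_{n,i}),V_{n,i},s)\in\mathrm{Hom}_\mathcal{N}(\Omega(V_{n,i}),V_{n,i})$ and then argues linear independence by observing that $\mathrm{Im}(\omega_1)\subset\mathrm{Im}(\omega_2)\subset\cdots\subset\mathrm{Im}(\omega_n)$ is a chain of proper inclusions. The only thing to make explicit is this last step: having pairwise distinct parameters $j_s$ does not by itself yield independence of the $\beta$'s, so you would still need the proper-inclusion observation (or an equivalent) to close the argument.
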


\begin{proof}
As in (\ref{eq:ell}), write $\ell=\mu\, e + \ell'$.
It follows from the assumptions that $\Omega(V_{n,i})$ is an indecomposable $\mathcal{N}$-module 
whose top is isomorphic to $S_{i+1}$ and $\mathrm{dim}_k\,\Omega(V_{n,i})=\ell-\ell_{n,i}\ge e$.
Let $\theta_{\Omega,n,i}$ be the multiplicity of $S_{i+1}$ as a composition factor of $\Omega(V_{n,i})$. Note
that $\theta_{\Omega,n,i}=\mu-n$ if $i+1> \ell'$ and $\theta_{\Omega,n,i}=\mu-n+1$ if $i+1\le \ell'$.
Fix $s\in\{1,2,\ldots,n\}$. We have the following commutative diagram of $\mathcal{N}$-modules with exact rows
\begin{equation}
\label{eq:seq!}
\xymatrix{
0\ar[r]&  \Omega(V_{n,i}) \ar[rr]^{\iota} \ar[dd]_{\omega_s} && P_1=V_{\mu,\ell'} \ar[rr]^{\pi} \ar[dd]^{\small \left(\begin{array}{c}\kappa_s\\0\end{array}\right)} && V_{n,i}\ar[r]\ar@{=}[dd] & 0\\
&&&&&&&\\
0 \ar[r]& V_{n,i} \ar[rr]^(.4){\iota_s} &&
V_{n+s,i} \oplus V_{n-s,i} \ar[rr]^{\pi_s}&& V_{n,i} \ar[r]& 0}
\end{equation}
where 
$$\begin{array}{rclrcl}
\iota &=& \beta(\Omega(V_{n,i}),V_{\mu,\ell'}, \theta_{\Omega,n,i})\;,&
\pi&=&\beta(V_{\mu,\ell'}, V_{n,i},\theta(1,n,i))\;,\\
\omega_s&=&\beta(\Omega(V_{n,i}),V_{n,i}, s)\;,&
\kappa_s&=&\beta(V_{\mu,\ell'}, V_{n+s,i},\theta(1,n+s,i))\;.
\end{array}$$
By Lemma \ref{lem:ext1}, it follows that the short exact sequence $\mathcal{E}_s$, which is the bottom row of (\ref{eq:seq!}), 
corresponds to the map
$\omega_s\in\mathrm{Hom}_\mathcal{N}(\Omega(V_{n,i}),V_{n,i})$. Therefore, to prove Lemma \ref{lem:ext2}, it suffices to show that $\omega_1,\ldots,\omega_n$
are $k$-linearly independent as elements of $\mathrm{Hom}_\mathcal{N}(\Omega(V_{n,i}),V_{n,i})$. Considering the images of $\omega_1,\ldots,\omega_n$
in $V_{n,i}$, we see that
$$\mathrm{Im}(\omega_1)\subset \mathrm{Im}(\omega_2)\subset \cdots \subset \mathrm{Im}(\omega_n)$$
where all inclusions are proper. This implies right away that $\omega_1,\ldots,\omega_n$
are $k$-linearly independent in $\mathrm{Hom}_\mathcal{N}(\Omega(V_{n,i}),V_{n,i})$, completing the proof of Lemma \ref{lem:ext2}.
\end{proof}

We next use the short exact sequences $\mathcal{E}_1,\ldots,\mathcal{E}_n$ from Definition \ref{def:extsequences} to define $n$
$k$-linearly independent deformations of $V_{n,i}$ over the ring of dual numbers $k[\epsilon]$.

\begin{dfn}
\label{def:dualnumberlifts}
Fix integers $1\le n\le \mu$ and $0\le i\le e-1$ such that $\ell_{n,i}=n\,e+i$ satisfies $\ell_{n,i}\le \lfloor \ell/2 \rfloor$.
Fix $s\in\{1,\ldots,n\}$, and let $\mathcal{E}_s$ be the short exact sequence from Definition \ref{def:extsequences}.
Define 
$$M_s=V_{n+s,i} \oplus V_{n-s,i}\;,$$
i.e. $M_s$ is the center module of the sequence $\mathcal{E}_s$. Also define an $\mathcal{N}$-module endomorphism $\epsilon_s: M_s\to M_s$ by
$$\epsilon_s=\iota_s\circ\pi_s\;.$$
Then $\epsilon_s\circ\epsilon_s=0$.
Define $R_s=k[t_s]/(t_s^2)$, so $R_s\cong k[\epsilon_s]$ which is isomorphic to the ring of dual numbers
$k[\epsilon]$. Then $M_s$ is a free $R_s$-module of rank $\ell_{n,i}=n\,e + i$, where we let $t_s$
act as the endomorphism $\epsilon_s$.
More precisely, if we view $V_{n+s,i}$ as a $k$-subspace of $M_s$ and use the $k$-basis
$\mathcal{B}_{n+s,i}$ of $V_{n+s,i}$ from (\ref{eq:basis}), then an $R_s$-basis of $M_s$ is given by
$$\{\vec{b}_{n+s,i,v,w}\;|\;1\le v \le e, \; 1\le w \le \theta(v,n,i)\}\;,$$
where $\theta(v,n,i)$ is as in (\ref{eq:delta}).
With respect to this $R_s$-basis, we obtain the following representation 
$$\rho_{n,i,s}:\qquad \mathcal{N} \to \mathrm{Mat}_{\ell_{n,i}}(R_s)$$
of $M_s$.
Viewing $k$ as a $k$-subalgebra of $R_s$ and the notation from Definition \ref{def:notation}, we have for all $1\le v\le e$:
\begin{eqnarray*}
\rho_{n,i,s}(v)&=&\rho_{n,i}(v) \,,\\[2ex]
\rho_{n,i,s}(\alpha_v) &=& \left\{\begin{array}{ccl}
\rho_{n,i}(\alpha_v)&:&v\not\equiv i\mod e\;,\\[2ex]
\rho_{n,i}(\alpha_v)+T_s&:&v\equiv  i\mod e\;,
\end{array}\right.
\end{eqnarray*}
where $T_s$ is an $e\times e$ block matrix
$$T_s=\left(T_{s,a,b}\right)_{1\le a,b\le e}$$
such that 
$T_{s,a,b}$ is a $\theta(a,n,i)\times \theta(b,n,i)$ matrix.
Moreover, $T_{s,a,b}$ is the zero matrix unless $a=i+1$ and $b\equiv i\mod e$, and
$$T_{s,a,b} =\left(\begin{array}{ccc|c}0&\cdots&0&\\ \vdots&&\vdots&\vec{t}_s\\ 0&\cdots&0&\end{array}\right)
\quad \mbox{for $a=i+1$ and $b\equiv i\mod e$}\;,$$
where $\vec{t}_s$ is the column vector of length $n=\theta(i+1,n,i)$ whose $(n-s+1)$-th entry is $t_s$ and whose all other entries are zero.
Since $k\otimes_{R_s} M_s\cong M_s/t_s\,M_s\cong M_s/\mathrm{Im}(\iota_s) = M_s/\mathrm{Ker}(\pi_s)$, we can define
$$\phi_s : k\otimes_{R_s} M_s\to V_{n,i}$$ 
to be the isomorphism induced by $\pi_s$. Hence we obtain a lift $(M_s,\phi_s)$ of $V_{n,i}$ over $R_s\cong k[\epsilon]$
corresponding to the sequence $\mathcal{E}_s$. Because the reduction map $\pi_{R_s}:R_s\to k$ is the $k$-algebra homomorphism given by sending
$t_s$ to $0$, the deformation $[M_s,\phi_s]$ of $V_{n,i}$ over $R_s\cong k[\epsilon]$ can be identified with the strict equivalence
class $[\rho_{n,i,s}]$. Since the tangent space of the deformation functor $\mathrm{Def}_\mathcal{N}(V_{n,i},-)$
is isomorphic to $\mathrm{Ext}^1_\mathcal{N}(V_{n,i},V_{n,i}$), it follows from Lemma \ref{lem:ext2} that the set of deformations
$$\{[M_s,\phi_s]\;|\;1\le s\le n\}= \{[\rho_{n,i,s}]\;|\;1\le s\le n\}$$
provides a $k$-basis of $\mathrm{Def}_\mathcal{N}(V_{n,i},k[\epsilon])$. 
\end{dfn}

Let $Q_{e,0}=\{1,\ldots,e\}$ (resp. $Q_{e,1}=\{\alpha_1,\ldots,\alpha_e\}$) be the set of vertices (resp.
arrows) in the circular quiver $Q_e$.
We want to use the lifts constructed in Definition \ref{def:dualnumberlifts} to define a map 
$f_{n,i}: Q_{e,0}\cup Q_{e,1}\to \mathrm{Mat}_{\ell_{n,i}}(k[[t_1,\ldots,t_n]])$
and an ideal $J_{n,i}$ of $k[[t_1,\ldots,t_n]]$ such that $J_{n,i}$ is the smallest ideal of $k[[t_1,\ldots,t_n]]$ with the property that $f_{n,i}$ defines a $k$-algebra 
homomorphism $\mathcal{N}\to  \mathrm{Mat}_{\ell_{n,i}}(k[[t_1,\ldots,t_n]]/J_{n,i})$. 
We first define certain matrices and determine their powers to set up the ideal $J_{n,i}$
(see also Definition \ref{def:needthis}(b)). 

\begin{dfn}
\label{def:ideal}
Fix a positive integer $n$, and let $N_n$ be the $n\times n$ matrix from Definition 
\ref{def:needthis}(b):
$$N_n=\left(\begin{array}{ccc|c}0&\cdots&0&t_n\\ \hline &&&t_{n-1}\\&\mathrm{I}_{n-1}&&\vdots\\&&&t_1\end{array}\right)$$
with entries in $k[t_1,\ldots,t_n]\subset k[[t_1,\ldots,t_n]]$. In particular, $N_1=(t_1)$.
Also define the following $(n+1)\times(n+1)$ matrix
$$\widetilde{N}_n=N_{n+1}\Big|_{t_{n+1}=0}=\left(\begin{array}{ccc|c}0&\cdots&0&0\\ \hline &&&t_n\\&\mathrm{I}_n&&\vdots\\&&&t_1\end{array}\right)\,.$$
Define inductively the following polynomials $h_{a,\nu}$ in $k[t_1,\ldots,t_n]\subset k[[t_1,\ldots,t_n]]$ for $1\le a\le n$, $\nu\ge 0$:
\begin{eqnarray}
\label{eq:polys0}
h_{a,0}&=& \left\{\begin{array}{ccl}
1&:&a=1\;,\\[2ex]
0&:&2\le a \le n\;;
\end{array}\right.\\[3ex]
h_{a,\nu} &=& \left\{\begin{array}{ccl}
t_n\, h_{n,\nu-1}&:&a=1\;,\\[2ex]
\label{eq:polys1}
h_{a-1,\nu-1} + t_{n-a+1}\,h_{n,\nu-1}&:&2\le a \le n\;.
\end{array}\right.
\end{eqnarray}
\end{dfn}

The following result is straightforward, so we omit its proof.
\begin{lemma}
\label{lem:needthiseasy}
Let $n$ be a positive integer, and let $N_n$ and $\widetilde{N}_n$ be as in
Definition $\ref{def:ideal}$. 
\begin{itemize}
\item[(i)] For $1\le a\le n$ and $\nu\ge 0$ , let $h_{a,\nu}\in k[t_1,\ldots,t_n]$ 
be as in Definition $\ref{def:ideal}$.
For all $\nu\ge 1$,
\begin{eqnarray*}
(N_n)^\nu &=& \left( \begin{array}{cccc} h_{1,\nu}&h_{1,\nu+1}&\cdots&h_{1,n+\nu-1}\\
h_{2,\nu}&h_{2,\nu+1}&\cdots&h_{2,n+\nu-1}\\ \vdots&\vdots&\ddots&\vdots\\
h_{n,\nu}&h_{n,\nu+1}&\cdots&h_{n,n+\nu-1}\\
\end{array}\right)\;;\\[3ex]
(\widetilde{N}_n)^\nu &=& 
 \left( \begin{array}{ccc|c} 0&\cdots&0&0\\ \hline &&&h_{1,n+\nu-1}\\
&(N_n)^{\nu-1}&&\vdots\\&&&h_{n,n+\nu-1}\\
\end{array}\right)\;.
\end{eqnarray*}
\vspace{1ex}
\item[(ii)] We have
\begin{eqnarray*}
(N_n)^n&=& t_n\,(\mathrm{I}_n) + t_{n-1}\,(N_n) + \cdots + t_2\, (N_n)^{n-2} + 
t_1\, (N_n)^{n-1} \;;\\[3ex]
(\widetilde{N}_n)^{n+1}&=& t_n\,(\widetilde{N}_n) + 
t_{n-1}\,(\widetilde{N}_n)^2 + \cdots+ t_2\, (\widetilde{N}_n)^{n-1} 
+ t_1\, (\widetilde{N}_n)^{n} \;.
\end{eqnarray*}
\end{itemize}
\end{lemma}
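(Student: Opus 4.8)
The plan is to prove Lemma \ref{lem:needthiseasy} by a direct computation, establishing (i) by induction on $\nu$ and then deducing (ii) as an immediate consequence. The key observation is that the polynomials $h_{a,\nu}$ are \emph{defined} by exactly the recursion that arises when one multiplies $N_n$ against a column vector. So first I would verify the base case $\nu=1$ of (i): the columns of $N_n^1=N_n$ are, reading left to right, $(h_{1,1},\ldots,h_{n,1})^{\mathrm{tr}},\ldots,(h_{1,n},\ldots,h_{n,n})^{\mathrm{tr}}$. Using the definitions (\ref{eq:polys0}) and (\ref{eq:polys1}), one computes $h_{1,1}=t_n h_{n,0}=0$, and for $2\le a\le n$, $h_{a,1}=h_{a-1,0}+t_{n-a+1}h_{n,0}=h_{a-1,0}$, which is $1$ precisely when $a-1=1$, i.e. $a=2$, and $0$ otherwise. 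So the first column of the claimed matrix is $(0,1,0,\ldots,0)^{\mathrm{tr}}$, matching the first column of $N_n$ (whose top entry is $0$ and whose block $\mathrm{I}_{n-1}$ contributes the $1$ in row $2$). More generally, for the $j$-th column with $1\le j\le n-1$ one gets $h_{a,j}$ supported in row $a=j+1$, while for $j=n$ one reads off $h_{a,n}$: here $h_{1,n}=t_n h_{n,n-1}$ and by descending through the recursion $h_{n,n-1}=h_{n-1,n-2}+t_1 h_{n,n-2}=\cdots$, eventually reducing to $h_{n,n-1}=t_1 h_{n,n-2}+\cdots$, which after fully unwinding gives $h_{n,n-1}=1$ and hence $h_{a,n}=t_{n-a+1}$; thus the last column is $(t_n,t_{n-1},\ldots,t_1)^{\mathrm{tr}}$, matching the right-hand column of $N_n$.

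For the inductive step of (i), I would assume the formula for $N_n^\nu$ and compute $N_n^{\nu+1}=N_n\cdot N_n^\nu$ one column at a time. The $c$-th column of $N_n^\nu$ is the vector $\vec{v}_c=(h_{1,\nu+c-1},\ldots,h_{n,\nu+c-1})^{\mathrm{tr}}$, and one checks that $N_n\vec{v}_c$ has $a$-th entry equal to $t_n h_{n,\nu+c-1}$ when $a=1$ (from the top-right $t_n$) and equal to $h_{a-1,\nu+c-1}+t_{n-a+1}h_{n,\nu+c-1}$ when $2\le a\le n$ (the first summand from the $\mathrm{I}_{n-1}$ block, the second from the rightmost column $(t_{n-1},\ldots,t_1)^{\mathrm{tr}}$). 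By the recursion (\ref{eq:polys1}), this is precisely $(h_{1,\nu+c},\ldots,h_{n,\nu+c})^{\mathrm{tr}}$, i.e. the $c$-th column of the matrix claimed for $N_n^{\nu+1}$. This closes the induction. The statement for $\widetilde{N}_n$ follows by applying the already-proved formula with $n$ replaced by $n+1$ and $t_{n+1}=0$: setting $t_{n+1}=0$ kills the top-right entry of $N_{n+1}$, so $\widetilde{N}_n$ has first row zero and, deleting the first row and column, leaves exactly $N_n$; then $(\widetilde{N}_n)^\nu$ is block-lower-triangular with a zero first row, its lower-left $n\times n$ block equals $N_n^{\nu-1}$ (one factor of $\widetilde{N}_n$ is "spent" moving out of the first coordinate), and its rightmost column below the first entry is $(h_{1,n+\nu-1},\ldots,h_{n,n+\nu-1})^{\mathrm{tr}}$, read off from the $n+1$-variable formula with $t_{n+1}=0$.

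For part (ii), I would note that it is just the Cayley--Hamilton-type identity coming from the companion-like shape of $N_n$. Concretely, the $n$-th column of $N_n^n$ is, by (i), the vector $(h_{1,2n-1},\ldots,h_{n,2n-1})^{\mathrm{tr}}$; but $N_n^n$ can also be written as $N_n\cdot N_n^{n-1}$, and since the last column of $N_n$ is $(t_n,\ldots,t_1)^{\mathrm{tr}}=\sum_{j=1}^{n}t_{n-j+1}\vec{e}_j$ (where $\vec{e}_j$ is the $j$-th standard basis vector), while the first $n-1$ columns of $N_n$ just shift, a direct expansion shows $N_n^n=\sum_{j=1}^{n}t_{n-j+1}N_n^{j-1}=t_n\mathrm{I}_n+t_{n-1}N_n+\cdots+t_1 N_n^{n-1}$; alternatively one reads the identity off column-by-column from (i) using the recursion $h_{1,\nu}=t_n h_{n,\nu-1}$ together with $h_{a,\nu}-h_{a-1,\nu-1}=t_{n-a+1}h_{n,\nu-1}$. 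The identity for $\widetilde{N}_n$ is the same statement with $n$ replaced by $n+1$ and $t_{n+1}=0$, using that $\widetilde{N}_n$ annihilates $\vec{e}_1$ so the $\mathrm{I}_{n+1}$ term drops out, leaving $(\widetilde{N}_n)^{n+1}=t_n\widetilde{N}_n+\cdots+t_1(\widetilde{N}_n)^n$. There is no serious obstacle here: the only mild subtlety is bookkeeping the index shift (the $\nu+c-1$ appearing in column $c$) and the orientation of the identity block, so the main care is simply in getting the indexing of the $h_{a,\nu}$ aligned with the columns of the matrix powers; this is why the paper is content to omit the proof.
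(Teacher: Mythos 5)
The paper gives no proof of this lemma — it is explicitly omitted as ``straightforward'' — so there is nothing to compare against beyond the obvious direct computation, which is exactly what you do. Your induction on $\nu$ for the $N_n$ formula is correct: the base case $\nu=1$ reduces to $h_{a,j}=\delta_{a,j+1}$ for $0\le j\le n-1$ and $h_{a,n}=t_{n-a+1}$, and the inductive step is precisely the defining recursion (\ref{eq:polys0})--(\ref{eq:polys1}) read column by column. The passage to $\widetilde{N}_n$ via the $(n+1)$-variable identity with $t_{n+1}=0$ is the right move (one checks $\tilde h_{a+1,\nu+1}\big|_{t_{n+1}=0}=h_{a,\nu}$), and part (ii) follows from the companion-matrix shape of $N_n$ as you say: $N_n\vec e_j=\vec e_{j+1}$ for $j<n$, so the identity holds when applied to $\vec e_1$, and then on every $\vec e_c=N_n^{c-1}\vec e_1$ by commutativity.

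One small factual error in an inessential aside: you write that ``$\widetilde{N}_n$ annihilates $\vec e_1$ so the $\mathrm{I}_{n+1}$ term drops out.'' This is false — the first column of $\widetilde{N}_n$ is $\vec e_2$, so $\widetilde{N}_n\vec e_1=\vec e_2\ne 0$ (what is true is that the first \emph{row} of $\widetilde{N}_n$ vanishes, i.e. $\vec e_1^{\,\mathrm{tr}}\widetilde{N}_n=0$). But no such fact is needed: the $\mathrm{I}_{n+1}$ term in $N_{n+1}^{n+1}=t_{n+1}\mathrm{I}_{n+1}+t_nN_{n+1}+\cdots+t_1N_{n+1}^n$ has coefficient $t_{n+1}$, which is simply set to $0$ in passing to $\widetilde{N}_n$. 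Delete the parenthetical and the argument is clean.
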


\begin{dfn}
\label{def:universallift}
Fix integers $1\le n\le \mu$ and $0\le i\le e-1$ such that $\ell_{n,i}=n\,e+i$ satisfies $\ell_{n,i}\le \lfloor \ell/2 \rfloor$.
Define $J_{n,i}$ to be the ideal of $k[[t_1,\ldots,t_n]]$ generated by the entries of the matrix $(N_n)^{m_i}$, where
$$m_i =\left\{\begin{array}{ccl}
\mu&:&0\le i \le \ell'\;,\\[2ex]
\mu-1&:&\ell'+1\le i \le e-1\;.
\end{array}\right.$$
By Definition \ref{def:ideal} and Lemma \ref{lem:needthiseasy}(i), we have
\begin{equation}
\label{eq:ideal!}
J_{n,i} = \left(h_{1,m_i},h_{2,m_i},\ldots , h_{n,m_i}\right)\,.
\end{equation}
Define a map
$$f_{n,i}:\qquad Q_{e,0}\cup Q_{e,1}\to \mathrm{Mat}_{\ell_{n,i}}(k[[t_1,\ldots,t_n]])$$
as follows. Viewing $k$ as a $k$-subalgebra of $k[[t_1,\ldots,t_n]]$ and using the notation introduced
in Definition \ref{def:notation}, define for all $1\le v\le e$:
\begin{eqnarray*}
f_{n,i}(v) &=& \rho_{n,i}(v)\quad\mbox{as in (\ref{eq:rho11vertices}), and}\\[2ex]
f_{n,i}(\alpha_v) &=& \left(\begin{array}{ccccc}0&0&\quad\cdots\quad&0&\delta_{v,e}B_{e,n,i}\\[1ex]
\delta_{v,1}B_{1,n,i}&0&&&0\\[1ex] 0&\delta_{v,2}B_{2,n,i}&\;\ddots&&\vdots\\[2ex]
\vdots&\ddots&\;\ddots&0&0\\[1ex] 0&\cdots&0&\delta_{v,e-1}B_{e-1,n,i}&0\end{array}\right)
\end{eqnarray*}
such that
\begin{equation}
\label{eq:Bvni}
B_{v,n,i}=\left\{\begin{array}{ccl}
A_{v,n,i}&:&v\not\equiv i\mod e\;,\\[2ex]
A_{v,n,i}+\left(\begin{array}{ccc|c}0&\cdots&0&t_n\\ \vdots&&\vdots&\vdots\\ 0&\cdots&0&t_1\end{array}\right)&:&v\equiv  i\mod e\;,
\end{array}\right.
\end{equation}
where $A_{v,n,i}$ is as in (\ref{eq:Avni}).
\end{dfn}

\begin{lemma}
\label{lem:ideal!!!}
Let $1\le n\le \mu$ and $0\le i\le e-1$ be integers such that $\ell_{n,i}=n\,e+i$ satisfies $\ell_{n,i}\le \lfloor \ell/2 \rfloor$.
Let $f_{n,i}$ and $J_{n,i}$ be as in Definition $\ref{def:universallift}$. Then $f_{n,i}$ defines a $k$-algebra
homomorphism
$$\rho_{U,n,i}:\qquad \mathcal{N}\to \mathrm{Mat}_{\ell_{n,i}}(k[[t_1,\ldots,t_n]]/J_{n,i})$$
and $J_{n,i}$ is the smallest ideal of $k[[t_1,\ldots,t_n]]$ with this property.
\end{lemma}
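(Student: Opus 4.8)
The plan is to verify directly that the matrices $f_{n,i}(v)$ and $f_{n,i}(\alpha_v)$, reduced modulo $J_{n,i}$, satisfy the defining relations of $\mathcal{N}=kQ_e/\mathcal{J}^\ell$, and then to argue minimality of $J_{n,i}$. A $k$-algebra homomorphism $\rho_{U,n,i}$ as claimed exists if and only if the images satisfy: the idempotent/orthogonality relations $\rho_{U,n,i}(v)\rho_{U,n,i}(w)=\delta_{v,w}\rho_{U,n,i}(v)$, $\sum_v\rho_{U,n,i}(v)=\mathrm{I}$; the path-compatibility relations $\rho_{U,n,i}(v+1)\rho_{U,n,i}(\alpha_v)=\rho_{U,n,i}(\alpha_v)=\rho_{U,n,i}(\alpha_v)\rho_{U,n,i}(v)$; and the zero relation coming from $\mathcal{J}^\ell$, namely that every product of $\ell$ consecutive arrows vanishes. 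The first two families hold already over $k[[t_1,\ldots,t_n]]$ by the block structure in Definition \ref{def:notation}, since only $f_{n,i}(\alpha_v)$ was perturbed and only in the block indexed by $(i+1,i)$ (or $(1,e)$ when $i=0$), which is consistent with the vertex decomposition; I would state this and move on.

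The substantive point is the relation $\mathcal{J}^\ell=0$. Composing $e$ consecutive arrows around the cycle starting at vertex $v$ produces the block-diagonal matrix whose block at vertex $v$ is a product $B_{v-1,n,i}\cdots B_{v,n,i}$ of the $B$-matrices; because exactly one of the $e$ factors in each such cyclic product is the perturbed matrix $A_{\cdot,n,i}+(\text{column }(t_n,\ldots,t_1)^{\mathrm{t}})$ and all the others are the unperturbed maximal-rank identity-type matrices from (\ref{eq:Avni}), the cyclic product collapses (after the necessary rank bookkeeping between the unequal block sizes $\theta(a,n,i)$) to a matrix conjugate/equal to $N_n$ acting on the $n$-dimensional $S_v$-isotypic piece for $v\le i$, and to a nilpotent shift with an extra $t$-column otherwise; here I expect to reuse the computation already implicit in Definition \ref{def:dualnumberlifts} where the single perturbation $T_s$ was introduced. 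Iterating, the product of $\mu e$ arrows (for $0\le i\le\ell'$) or $(\mu-1)e$ arrows plus the remaining short run (for $\ell'+1\le i\le e-1$) has entries lying in the ideal generated by the entries of $(N_n)^{m_i}$, which is exactly $J_{n,i}$ by (\ref{eq:ideal!}) and Lemma \ref{lem:needthiseasy}(i); and the product of $\ell$ arrows, being a further multiple of this by the leftover factors, is then zero modulo $J_{n,i}$. This shows $f_{n,i}$ descends to a well-defined $\rho_{U,n,i}$.

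For minimality, suppose $J'$ is any ideal of $k[[t_1,\ldots,t_n]]$ for which $f_{n,i}$ induces a $k$-algebra homomorphism $\mathcal{N}\to\mathrm{Mat}_{\ell_{n,i}}(k[[t_1,\ldots,t_n]]/J')$. Then the image of a product of $\ell$ consecutive arrows must be zero, so all its entries lie in $J'$; by the computation above, among these entries one reads off precisely the polynomials $h_{1,m_i},\ldots,h_{n,m_i}$ (appearing in a single column of the relevant cyclic product raised to the appropriate power, via Lemma \ref{lem:needthiseasy}(i)), and these generate $J_{n,i}$ by (\ref{eq:ideal!}). Hence $J_{n,i}\subseteq J'$, giving minimality.

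The main obstacle is the bookkeeping in the second paragraph: tracking how the rectangular matrices $A_{v,n,i}$ of sizes $\theta(v+1,n,i)\times\theta(v,n,i)$ compose around the cycle — the sizes $\theta(a,n,i)$ jump between $n$ and $n+1$ as $a$ crosses $i$ — and checking that, after this size bookkeeping, the cyclic product of the $B$-matrices really reproduces $N_n$ (and not some other $n\times n$ matrix) acting on the appropriate isotypic component, with the extra $t$-column of (\ref{eq:Bvni}) contributing exactly the last column $(t_n,\ldots,t_1)^{\mathrm{t}}$ of $N_n$. Once that identification is pinned down, Lemma \ref{lem:needthiseasy} does the rest of the work of relating powers of $N_n$ to the $h_{a,\nu}$ and hence to $J_{n,i}$.
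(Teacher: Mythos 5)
Your overall plan coincides with the paper's: reduce to the single surviving block of the length-$\ell$ path from each vertex $v$, write it as a product of the $B$-matrices from (\ref{eq:Bvni}), identify the cyclic chunk of length $e$ with a power-series matrix, and then compare powers using Lemma \ref{lem:needthiseasy}. The paper does exactly this, introducing $E_{v,n,i}$, its unique nonzero block $C_{v,n,i}=(B_{v,n,i,e})^\mu B_{v,n,i,\ell'}$, and the key identification (\ref{eq:Bvnie}).

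However, there is a concrete error in the identification you propose, and it is precisely at the point you flag as "the main obstacle." You say the cyclic product collapses to $N_n$ on the $n$-dimensional isotypic piece "for $v\le i$." This is backwards: for $v\le i$ the target vertex $v+1$ still lies in $\{1,\ldots,i\}$, so $\theta(v+1,n,i)=n+1$ and the cyclic product $B_{v,n,i,e}$ is the $(n+1)\times(n+1)$ matrix $\widetilde{N}_n$; it is for $i\le v\le e-1$ (and for $i=0$) that one gets the $n\times n$ matrix $N_n$. This is not a cosmetic slip: the value of $m_i$ (whether $J_{n,i}$ is generated by $(N_n)^\mu$ or $(N_n)^{\mu-1}$) is decided by running over all starting vertices $v$, taking the smallest ideal that arises, and observing --- via Lemma \ref{lem:needthiseasy}(i) --- that the entries of $(\widetilde{N}_n)^\mu$ generate the same ideal as those of $(N_n)^{\mu-1}$. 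The paper's proof then pins down $v_0\equiv\ell'\pmod e$ as the vertex realizing the minimum and checks, by a three-way enumeration of the possible shapes of $C_{v,n,i}$ in each of the cases $0\le i\le\ell'$ and $\ell'+1\le i\le e-1$, that the resulting ideal is $(N_n)^\mu$ in the first case and $(N_n)^{\mu-1}$ in the second. With your reversed identification, this bookkeeping would produce the wrong $m_i$. Your description of the product as "$(\mu-1)e$ arrows plus the remaining short run" is also not how the computation is organized --- one always multiplies all $\ell=\mu e+\ell'$ arrows and the distinction comes purely from whether $B_{v,n,i,e}$ is $N_n$ or $\widetilde{N}_n$ --- though this is closer to a matter of exposition. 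So: right approach and right auxiliary lemma, but the case analysis that you leave undone is exactly the content of the proof, and the one concrete step you do commit to is stated with the cases swapped.
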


\begin{proof}
For each $v\in\{1,\ldots,e\}$, define $E_{v,n,i}$ to be the following product of $\ell=\mu\,e+\ell'$
matrices:
\begin{eqnarray*}
E_{v,n,i}&=&\left(f_{n,i}(\alpha_v)\cdot f_{n,i}(\alpha_{v-1})\cdots f_{n,i}(\alpha_1) \cdot f_{n,i}(\alpha_e)\cdots f_{n,i}(\alpha_{v+1})\right)^\mu\cdot\\
&&\,f_{n,i}(\alpha_v)\cdot f_{n,i}(\alpha_{v-1})\cdots f_{n,i}(\alpha_{v-\ell'+1})
\end{eqnarray*}
where  we take the indices of the $\alpha$'s occurring in the last $\ell'$ matrices modulo $e$ if necessary.
To prove that $f_{n,i}$ defines a $k$-algebra homomorphism modulo $J_{n,i}$ and that
$J_{n,i}$ is the smallest ideal of $k[[t_1,\ldots,t_n]]$ with this property, it suffices to show that $E_{v,n,i}$
has entries in $J_{n,i}$ for all $1\le v\le e$ and that there exists an element $v_0\in\{1,\ldots,e\}$ such that the entries of $E_{v_0,n,i}$
generate $J_{n,i}$. 

Fix $v\in\{1,\ldots,e\}$. Then $E_{v,n,i}$ is an $e\times e$ block matrix whose blocks are of the same size as
in $f_{n,i}(\alpha_v)$. Moreover, the only block that is not a zero matrix is the $(a_v,b_v)$ block where
$1\le a_v,b_v\le e$ and $a_v\equiv v+1 \mod e$ and $b_v\equiv v-\ell'+1 \mod e$. Letting
$C_{v,n,i}$ be the $(a_v,b_v)$ block of $E_{v,n,i}$, we obtain
$$C_{v,n,i}=\left(B_{v,n,i} B_{v-1,n,i}\cdots B_{1,n,i}B_{e,n,i}\cdots B_{v+1,n,i}\right)^\mu
B_{v,n,i}B_{v-1,n,i}\cdots B_{v-\ell'+1,n,i}$$
where we use the matrices defined in (\ref{eq:Bvni}) and we take the first indices of the last $\ell'$ matrices modulo 
$e$ if necessary.
Define $B_{v,n,i,0}=\mathrm{I}_{\theta(a_v,n,i)}$, and, for $1\le w\le e$, define $B_{v,n,i,w}=B_{v,n,i}B_{v-1,n,i}\cdots 
B_{v-w+1,n,i}$
to be a product of $w$ matrices, where we take again the first indices of these matrices modulo 
$e$ if necessary. Then we can write
$$C_{v,n,i} = \left(B_{v,n,i,e}\right)^\mu\, B_{v,n,i,\ell'}\;.$$
Note that
\begin{equation}
\label{eq:Bvnie}
B_{v,n,i,e}=\left\{\begin{array}{ccl} N_n&:&i=0 \mbox{ or } 1\le i \le v\le e-1\;;
\\[2ex]
\widetilde{N}_n&:&1\le v\le i-1  \mbox{ or } v=e \mbox{ and } i\ge 1 \;.
\end{array}\right.
\end{equation}

Suppose first that $i=0$. Then $C_{v,n,0}$ is equal to either $(N_n)^\mu$ or $(N_n)^{\mu+1}$. In particular,
if $v_0\in\{1,\ldots,e\}$ such that $v_0\equiv \ell'\mod e$ then $C_{v_0,n,0}=(N_n)^\mu$. It follows
from Definition \ref{def:universallift} and Lemma \ref{lem:needthiseasy}(i) 
that the entries of $E_{v_0,n,0}$ generate the same ideal as the entries of 
$(N_n)^{\mu}$ and that for all $v\in\{1,\ldots,e\}$, the entries of $E_{v,n,0}$ lie in this ideal. 
This proves Lemma \ref{lem:ideal!!!} for $i=0$.

Now suppose $i\ge 1$. If $i \le v\le e-1$, we have the following three possibilities for $C_{v,n,i}$:
\begin{equation}
\label{eq:poss1}
\left(N_n\right)^{\mu+1},\qquad 
\left(\begin{array}{ccc|c}&&&h_{1,\mu+n}\\&\left(N_n\right)^\mu&&\vdots\\&&&h_{n,\mu+n}\end{array}\right), \qquad
\left(N_n\right)^{\mu},
\end{equation}
where $\left(N_n\right)^{\mu+1}$ occurs precisely when $i\le v\le \ell'-1$, and
$\left(N_n\right)^{\mu}$ occurs precisely when $v\ge \max(i,\ell')$.

If $1\le v\le i-1$ or $v=e$, we have the following three possibilities for $C_{v,n,i}$:
\begin{equation}
\label{eq:poss2}
(\widetilde{N}_n)^{\mu+1},\qquad
\left(\begin{array}{ccc}0&\cdots&0\\\hline &&\\&\left(N_n\right)^\mu\\ &&\end{array}\right),\qquad
(\widetilde{N}_n)^\mu,
\end{equation}
where $(\widetilde{N}_n)^\mu$ occurs precisely when $\ell'\le v\le i-1$ or when $v=e$ and $\ell'=0$.

Let $v_0\in\{1,\ldots,e\}$ be such that $v_0\equiv \ell'\mod e$.
If $1\le i\le \ell'$, then $(\widetilde{N}_n)^\mu$ in (\ref{eq:poss2}) cannot occur. In this case,
it follows from (\ref{eq:poss1}), (\ref{eq:poss2}) and Lemma \ref{lem:needthiseasy}(i)
that the entries of $C_{v_0,n,i}$ generate the same ideal as the entries of 
$(N_n)^\mu$ and that for all $v\in\{1,\ldots,e\}$, the entries of $E_{v,n,i}$ lie in this ideal. 
On the other hand, if $\ell'+1\le i\le e-1$ then $(\widetilde{N}_n)^\mu$ in (\ref{eq:poss2}) can occur.
In this case, it follows from (\ref{eq:poss1}), (\ref{eq:poss2}) and Lemma \ref{lem:needthiseasy}(i)
that the entries of $C_{v_0,n,i}$ generate the same ideal as the entries of 
$(N_n)^{\mu-1}$ and that for all $v\in\{1,\ldots,e\}$, the entries of $E_{v,n,i}$ lie in this ideal. 
This proves Lemma \ref{lem:ideal!!!} in the case when $i\ge 1$.
\end{proof}

\begin{thm}
\label{thm:versallift}
Let $1\le n\le \mu$ and $0\le i\le e-1$ be integers such that $\ell_{n,i}=n\,e+i$ satisfies $\ell_{n,i}\le \lfloor \ell/2 \rfloor$.
Let $J_{n,i}$ be the ideal in $k[[t_1,\ldots,t_n]]$ from Definition $\ref{def:universallift}$ and let
$$\rho_{U,n,i}:\qquad \mathcal{N}\to \mathrm{Mat}_{\ell_{n,i}}(k[[t_1,\ldots,t_n]]/J_{n,i})$$
be the $k$-algebra homomorphism from Lemma $\ref{lem:ideal!!!}$.
The versal deformation ring $R(\mathcal{N},V_{n,i})$ of $V_{n,i}$ is isomorphic to 
$$R_{n,i}=k[[t_1,\ldots,t_n]]/J_{n,i}$$ 
with the reduction map $\pi_{R_{n,i}}:R_{n,i}\to k$ given by the morphism in $\hat{\mathcal{C}}$ sending $t_j$ to $0$ for $1\le j\le n$.
Moreover, the versal deformation of $V_{n,i}$ over $R_{n,i}$ is given by the strict 
equivalence class $[\rho_{U,n,i}]$.
\end{thm}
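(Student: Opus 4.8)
The plan is to identify $R_{n,i}$ with the versal deformation ring by exhibiting $[\rho_{U,n,i}]$ as a versal deformation of $V_{n,i}$ over $R_{n,i}$. First I would check that $\rho_{U,n,i}$ really is a lift of $V_{n,i}$ over $R_{n,i}$: reducing the matrix entries modulo $t_1,\dots,t_n$ replaces each $B_{v,n,i}$ by $A_{v,n,i}$, so $\pi_{R_{n,i}}\circ\rho_{U,n,i}=\rho_{n,i}$ by Definition \ref{def:notation}, and together with the obvious isomorphism $k\otimes_{R_{n,i}}R_{n,i}^{\ell_{n,i}}\cong V_{n,i}$ this produces a deformation $[\rho_{U,n,i}]$ of $V_{n,i}$ over $R_{n,i}$. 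By the versal property recalled in Section \ref{s:prelim}, there is then a classifying morphism $\alpha\colon R(\mathcal{N},V_{n,i})\to R_{n,i}$ in $\hat{\mathcal{C}}$ carrying the versal deformation to $[\rho_{U,n,i}]$, and everything follows once $\alpha$ is shown to be an isomorphism, by the uniqueness of the versal hull (which simultaneously identifies $[\rho_{U,n,i}]$ with the versal deformation).

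The surjectivity of $\alpha$ is the easy half. Because $m_i\ge 2$ under the hypothesis $\ell_{n,i}\le\lfloor\ell/2\rfloor$, we have $J_{n,i}\subseteq\mathfrak{m}_{k[[t_1,\dots,t_n]]}^2$, so $\dim_k\mathfrak{m}_{R_{n,i}}/\mathfrak{m}_{R_{n,i}}^2=n$; by Lemma \ref{lem:ext1} we also have $\dim_k\mathrm{Ext}^1_{\mathcal{N}}(V_{n,i},V_{n,i})=n$, so by completeness and Nakayama's lemma it suffices to show that $\alpha$ induces a bijection on tangent spaces, i.e.\ that the map $\mathrm{Hom}_{\hat{\mathcal{C}}}(R_{n,i},k[\epsilon])\to\mathrm{Def}_{\mathcal{N}}(V_{n,i},k[\epsilon])$, $\beta\mapsto\mathrm{Def}_{\mathcal{N}}(V_{n,i},\beta)([\rho_{U,n,i}])$, is onto. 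Taking $\beta_s\colon t_s\mapsto\epsilon$, $t_j\mapsto 0$ for $j\ne s$, and comparing Definition \ref{def:universallift} with Definition \ref{def:dualnumberlifts} — only the blocks $B_{v,n,i}$ with $v\equiv i\bmod e$ are affected, and their last column $(t_n,\dots,t_1)^{\mathrm{t}}$ specializes to the column with $\epsilon$ in position $n-s+1$, which is precisely the perturbation $T_s$ — one obtains $\mathrm{Def}_{\mathcal{N}}(V_{n,i},\beta_s)([\rho_{U,n,i}])=[\rho_{n,i,s}]$. Since $\{[\rho_{n,i,1}],\dots,[\rho_{n,i,n}]\}$ is a $k$-basis of $\mathrm{Def}_{\mathcal{N}}(V_{n,i},k[\epsilon])$ by Definition \ref{def:dualnumberlifts} (which rests on Lemma \ref{lem:ext2}), the map is onto and $\alpha$ is surjective. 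In particular, $R(\mathcal{N},V_{n,i})$ may be presented as $k[[t_1,\dots,t_n]]/\mathcal{I}$ with $\mathcal{I}\subseteq\mathfrak{m}^2$ in such a way that $\alpha$ becomes the canonical projection, whence $\mathcal{I}\subseteq J_{n,i}$.

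For injectivity it remains to prove $J_{n,i}\subseteq\mathcal{I}$, and this is the main obstacle. The idea is to normalize the versal lift $\tau_U\colon\mathcal{N}\to\mathrm{Mat}_{\ell_{n,i}}(R(\mathcal{N},V_{n,i}))$ into the explicit form of Definition \ref{def:universallift} and then invoke the minimality statement in Lemma \ref{lem:ideal!!!}. Concretely, since idempotents of $\mathcal{N}$ lift and can be simultaneously conjugated to $\rho_{n,i}(1),\dots,\rho_{n,i}(e)$ by an element of the kernel of $\mathrm{GL}_{\ell_{n,i}}(R(\mathcal{N},V_{n,i}))\to\mathrm{GL}_{\ell_{n,i}}(k)$, we may assume $\tau_U(v)=\rho_{n,i}(v)$ for all $v$; then each $\tau_U(\alpha_v)$ is supported in the block that carries $A_{v,n,i}$ and reduces to $A_{v,n,i}$. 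One then shows, by a direct gauge argument using block‑diagonal matrices in that kernel — exploiting the near‑maximal rank of the $A_{v,n,i}$ and the uniseriality of $V_{n,i}$ — that the tuple $(\tau_U(\alpha_v))_v$ can be conjugated into the standard shape, i.e.\ that $\tau_U$ is, after a change of basis, given by the formulas $f_{n,i}$ with the $t_j$ replaced by suitable elements of $\mathfrak{m}_{R(\mathcal{N},V_{n,i})}$. Together with compatibility with $\alpha$ and the surjectivity already established, these elements generate $\mathfrak{m}_{R(\mathcal{N},V_{n,i})}$, hence define a continuous surjection $k[[t_1,\dots,t_n]]\twoheadrightarrow R(\mathcal{N},V_{n,i})=k[[t_1,\dots,t_n]]/\mathcal{I}$ through which $\tau_U$ is the reduction of $f_{n,i}$; since $\tau_U$ is an algebra homomorphism, Lemma \ref{lem:ideal!!!} forces $J_{n,i}\subseteq\mathcal{I}$, so $\mathcal{I}=J_{n,i}$ and $\alpha$ is an isomorphism. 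The part requiring genuine care is this normalization of the versal lift (equivalently, the rigidity of lifts of uniserial $\mathcal{N}$‑modules under the gauge action); by contrast, identifying $R_{n,i}$ as a quotient of $k[[t_1,\dots,t_n]]$ and feeding the normalized lift into Lemma \ref{lem:ideal!!!} is routine bookkeeping.
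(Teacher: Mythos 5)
Your surjectivity argument is essentially the paper's (comparing tangent spaces via the explicit deformations $[\rho_{n,i,s}]$ over $k[\epsilon]$), but your injectivity argument takes a genuinely different route. The paper proceeds by contradiction: it assumes a strictly smaller ideal $J'\subsetneq J_{n,i}$ over which $\rho_{U,n,i}$ lifts, reduces to the small-extension case $(t_1,\dots,t_n)J_{n,i}\subseteq J'$, writes a lift over $k[[t_1,\dots,t_n]]/J'$ as $f_{n,i}+\widetilde{D}$ with correction terms $\widetilde{D}_v$ having entries in $J_{n,i}$, and then expands the length-$\ell$ quiver relation to show that every monomial involving a $\widetilde{D}$ already lies in $J'$, so that the relation forces $J_{n,i}\subseteq J'$ -- a contradiction. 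You instead propose to normalize the versal lift $\tau_U$ itself into the form $f_{n,i}$ with the $t_j$ replaced by elements of $\mathfrak{m}_{R(\mathcal{N},V_{n,i})}$ and then feed the resulting surjection $k[[t_1,\dots,t_n]]\twoheadrightarrow R(\mathcal{N},V_{n,i})$ into Lemma \ref{lem:ideal!!!}; this is a ``rational canonical form for lifts'' argument rather than an obstruction argument, and it would make Theorem \ref{thm:versallift} a consequence of a rigidity statement about arbitrary lifts. The trade-off is that your approach isolates a conceptual normalization lemma, while the paper's approach only needs explicit control of the quiver relation modulo a small extension.

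The main issue is that your normalization claim is asserted, not proved, and it is where all the work is. It \emph{is} true, but the argument needs care: after reducing to $\tau_U(v)=\rho_{n,i}(v)$ and invertibly normalizing the ``long'' arrows, the decisive step is to bring the length-$e$ cycle $\Phi$ based at the vertex of top simple factor to the companion form $N_n$ (if $i=0$) or $\widetilde N_n$ (if $i\ge1$). This uses that $e_1,\Phi e_1,\dots,\Phi^{\theta(1,n,i)-1}e_1$ is an $R$-basis (Nakayama, since mod $\mathfrak{m}_R$ the cycle is the nilpotent shift), together with Cayley--Hamilton; when $i\ge1$ one must in addition observe that $\det\Phi=0$ because $\Phi$ factors through a strictly smaller rank (through $\theta(i+1,n,i)=n<n+1$), which is exactly what guarantees the top-left entry of the companion form is $0$ as required by $\widetilde N_n$. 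One must also check that, after normalizing $\Phi$, the individual rectangular arrows $\tau_U(\alpha_i)$ and $\tau_U(\alpha_e)$ land in the exact shapes $B_{i,n,i}$ and $A_{e,n,i}$, and that the parameters extracted from $\Phi$ and from the ``short'' cycle $\tau_U(\alpha_i)\cdots\tau_U(\alpha_e)\cdots$ agree (they do: the two cycles $C_eC_i$ and $C_iC_e$ have characteristic polynomials differing only by a factor of $x$). Finally, the step ``these elements generate $\mathfrak{m}_{R(\mathcal{N},V_{n,i})}$'' deserves a real argument: since $[\alpha\circ\tau_U]=[\rho_{U,n,i}]$ and strict equivalence preserves the characteristic polynomial of the cycle, the images of your $s_j$ under $\alpha$ are exactly the $t_j$ in $R_{n,i}$, from which generation follows.

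One small but genuine slip: you claim $m_i\ge2$ implies $J_{n,i}\subseteq\mathfrak{m}^2$. This is false for $n\ge2$ -- for example $(N_2)^2$ has entries $t_1,t_2$, so $J_2(2)=\mathfrak{m}$. What the hypothesis $\ell_{n,i}\le\lfloor\ell/2\rfloor$ actually gives is $m_i\ge 2n$ (since $\mu\ge 2n$ when $i\le\ell'$ and $\mu\ge 2n+1$ when $i>\ell'$), and it is $m_i\ge 2n$ that ensures $(N_n)^{m_i}=((N_n)^n)^{\cdots}$ has entries in $\mathfrak{m}^2$ by Cayley--Hamilton. The conclusion $\dim_k\mathfrak{m}_{R_{n,i}}/\mathfrak{m}_{R_{n,i}}^2=n$ is correct, but the reason you gave is not.
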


\begin{proof}
Let $S=R(\mathcal{N},V_{n,i})$ be the versal deformation ring of $V_{n,i}$, with reduction map 
$\pi_S:S\to k$. Since $\mathrm{dim}_k\,\mathrm{Ext}^1_\mathcal{N}(V_{n,i},V_{n,i})=n$
by Lemma \ref{lem:ext1}, it follows that $S$ is isomorphic to a quotient algebra of 
$k[[t_1,\ldots,t_n]]$ and that $n$ is minimal with this property. 
Let $\tau:\mathcal{N} \to \mathrm{Mat}_{\ell_{n.i}}(S)$ be 
a versal lift of $V_{n,i}$ over $S$ such that
$$\pi_S\circ \tau = \rho_{n,i}$$
where $\rho_{n,i}$ is the representation of $V_{n,i}$ from Definition \ref{def:notation}. 
Since by Lemma \ref{lem:ideal!!!}, $\rho_{U,n,i}$ is a lift of
$V_{n,i}$ over $R_{n,i}$, there exists a (not necessarily unique) morphism 
$$\gamma:S\to R_{n,i}$$
in $\hat{\mathcal{C}}$ such that we have an equality 
$$[\gamma \circ \tau] = [\rho_{U,n,i}]$$ of strict equivalence classes.
For $s\in\{1,\ldots,n\}$, let $\gamma_s:R_{n,i}\to R_s= k[[t_s]]/(t_s^2)$ be the morphism 
in $\hat{\mathcal{C}}$ sending $t_s$ to $t_s$ and $t_j$ to 0 for $1\le j \le n$, 
$j\neq s$. Then, for $1\le s\le n$, we have
$$[\gamma_s\circ\gamma\circ \tau] = [\gamma_s\circ \rho_{U,n,i}] = [\rho_{n,i,s}]$$ 
where $\rho_{n,i,s}$ is as in  Definition \ref{def:dualnumberlifts}. In other words,
using Lemma \ref{lem:ext2} together with Definition 
\ref{def:dualnumberlifts}, we obtain that $\lambda$
ranges over all morphisms $R_{n,i}\to k[\epsilon]$ in $\hat{\mathcal{C}}$
if and only if $\lambda\circ\gamma$
ranges over all morphisms $S\to k[\epsilon]$ in $\hat{\mathcal{C}}$. This implies that 
$\gamma:S \to R_{n,i}$ is surjective.

Suppose now that $\gamma$ is not injective. Then there must exist a non-trivial lift 
$\tau':\mathcal{N}\to \mathrm{Mat}_{\ell_{n,i}}(S')$ of $\rho_{U,n,i}$ over a ring
of the form $S'=k[[t_1,\ldots,t_n]]/J'$, corresponding to a surjective morphism 
$\gamma':S'\to R_{n,i}$ in $\hat{\mathcal{C}}$ with a non-trivial kernel, such that
$$[\gamma'\circ \tau'] = [\rho_{U,n,i}]\;.$$
Let $\gamma'': S \to S'$ be a morphism in $\hat{\mathcal{C}}$ such that
$$[\gamma'' \circ \tau] = [\tau']\;.$$
In particular, we have
$$[(\gamma'\circ\gamma'')\circ \tau] = [\rho_{U,n,i}]\;.$$
Using the same argument as above, we obtain that $\gamma'\circ\gamma''$ is surjective. Note that $\gamma'\circ\gamma''$ may in principle be different from $\gamma$, 
since we have not proved yet that $V_{n,i}$ has a universal deformation ring, but we only know that it has a versal deformation ring. Since $R_{n,i}$ has finite $k$-dimension, 
we know, however, that $\gamma'\circ\gamma''$ is injective if and only if 
$\mathrm{dim}_k\, S = \mathrm{dim}_k\, R_{n,i}$ if and only if
$\gamma$ is injective. Hence we can (and will) assume in what follows that 
$\gamma=\gamma'\circ\gamma''$.

For $1\le j \le n$, choose an element $u_j\in\gamma'^{-1}(t_j)$. Since $\gamma'$ is a morphism in $\hat{\mathcal{C}}$, it satisfies $\gamma'^{-1}(\mathfrak{m}_{R_{n,i}}) = \mathfrak{m}_{S'}$.
This means that $u_1,\ldots,u_n$ generate the maximal ideal $\mathfrak{m}_{S'}$, which implies that the morphism $\sigma:S'\to S'$ in $\hat{\mathcal{C}}$, defined by
$\sigma(t_j)=u_j$ for $1\le j\le n$, is an isomorphism. Define 
$\widetilde{\gamma}'=\gamma'\circ\sigma: S'\to R_{n,i}$, define 
$\widetilde{\gamma}'' = \sigma^{-1}\circ\gamma'':S \to S'$,
and define
$$\widetilde{\tau}' = \widetilde{\gamma}''\circ \tau:
\quad \mathcal{N}\to \mathrm{Mat}_{\ell_{n,i}}(S')\;.$$
Note that $\widetilde{\gamma}'(t_j)=t_j$ for $1\le j\le n$, meaning that 
$\widetilde{\gamma}':S'\to R_{n,i}$ is the natural projection. In particular, $J'$ is properly contained in $J_{n,i}$, and 
$\mathrm{Ker}(\widetilde{\gamma}' ) = J_{n,i}/J'$. Also, note that 
$\gamma =\widetilde{\gamma}'\circ \widetilde{\gamma}''$ and that $\widetilde{\tau}'$ 
is a non-trivial lift of $\rho_{U,n,i}$ over $S'$. 
We now show that $\widetilde{\tau}'$ does not exist, which implies that $\gamma$ is injective. To prove this, we can restrict to the case when 
\begin{equation}
\label{eq:assume1}
J'\supseteq (t_1,\ldots,t_n)\, J_{n,i}\;.
\end{equation}
Hence, we assume this from now on.

Since $[\widetilde{\gamma}'\circ \widetilde{\tau}'] = [\rho_{U,n,i}]$ and since 
$\widetilde{\gamma}':S'\to R_{n,i}$ is the natural projection,
there exists a matrix $\Sigma'$ in $\mathrm{Mat}_{\ell_{n,i}}(S')$ which is congruent to
the identity matrix modulo $\mathfrak{m}_{S'}$ such that 
$$\rho_{U,n,i} =
\widetilde{\gamma}'(\Sigma')\, \left(\widetilde{\gamma}'\circ \widetilde{\tau}'\right)\,
\widetilde{\gamma}'(\Sigma')^{-1} 
=\widetilde{\gamma}'\circ \left(\Sigma'\,\widetilde{\tau}'\,{\Sigma'}^{-1}\right)\,.$$
Replacing $\widetilde{\tau}'$ by $\Sigma'\,\widetilde{\tau}'\,{\Sigma'}^{-1}$, we can
(and will) assume from now on that
$$\widetilde{\gamma}'\circ \widetilde{\tau}'=\rho_{U,n,i}\;.$$
This means that for $1\le v\le e$, we can write
\begin{equation}
\label{eq:assume2}
\widetilde{\tau}'(\alpha_v)= f_{n,i}(\alpha_v) + \widetilde{D}_v\quad \mod J'
\end{equation}
for an $e\times e$ block matrix 
$$\widetilde{D}_v=\left(\widetilde{D}_{v,a,b}\right)_{1\le a,b\le e}$$
with entries in $k[[t_1,\ldots,t_n]]$,
where $\widetilde{D}_{v,a,b}$ is a $\theta(a,n,i)\times \theta(b,n,i)$ matrix and
$\theta(a,n,i)$ and $\theta(b,n,i)$ are as in (\ref{eq:delta}).
Moreover, $\widetilde{D}_{v,a,b}$ has entries in $J_{n,i}$. Since $\widetilde{\tau}'$
is a $k$-algebra homomorphism, we must have that for all $v\in\{1,\ldots,e\}$
the product of $\ell=\mu\,e+\ell'$ matrices
\begin{equation}
\label{eq:matrixproduct}
\left(\widetilde{\tau}'(\alpha_v)\cdot \widetilde{\tau}'(\alpha_{v-1})\cdots 
\widetilde{\tau}'(\alpha_1)\cdot \widetilde{\tau}'(\alpha_e)\cdots
\widetilde{\tau}'(\alpha_{v+1})\right)^\mu\cdot
\widetilde{\tau}'(\alpha_v)\cdot \widetilde{\tau}'(\alpha_{v-1})\cdots 
\widetilde{\tau}'(\alpha_{v-\ell'+1})
\end{equation}
lies in $\mathrm{Mat}_{\ell_{n,i}}(J')$ for all $v\in\{1,\ldots,e\}$, 
where  we take the indices of the $\alpha$'s occurring in the last $\ell'$ matrices modulo $e$ if necessary.
Using (\ref{eq:assume2}), we can expand this matrix product and write it as a sum of
monomials in $f_{n,i}(\alpha_w)$ and $\widetilde{D}_{w'}$, for $1\le w,w'\le e$.
By (\ref{eq:assume1}), since $J_{n,i}\subset (t_1,\ldots, t_n)$, it follows that any 
such monomial involving at least two matrices $\widetilde{D}_w$ and $\widetilde{D}_{w'}$ is a matrix with
entries in $J'$. 
To consider monomials involving precisely one matrix $\widetilde{D}_w$, we write
$$F_{v,n,i}=f_{n,i}(\alpha_v)\cdot f_{n,i}(\alpha_{v-1})\cdots f_{n,i}(\alpha_1) 
\cdot f_{n,i}(\alpha_e)\cdots f_{n,i}(\alpha_{v+1})\;.$$
Since $\ell_{n,i}\le \lfloor \ell/2 \rfloor$, it follows that 
$$2n\,e + 2i\le \mu\, e + \ell'\,,$$
and hence either $\mu > 2n+1$, or $\mu=2n+1$ and $e+\ell'\ge 2i$, or
$\mu=2n$ and $\ell'\ge 2i$. We have the following possibilities to consider for
the monomials in (\ref{eq:matrixproduct}) involving precisely one matrix $\widetilde{D}_w$:
\vspace{1ex}
\begin{enumerate}
\item[(A)] $\cdots \widetilde{D}_w\cdots \left(F_{v,n,i}\right)^{n+1}\cdots$ , or
\vspace{1ex}
\item[(B)] $\cdots \left(F_{v,n,i}\right)^{n+1}\cdots \widetilde{D}_w\cdots$ , or
\vspace{1ex}
\item[(C)]$\left(F_{v,n,i}\right)^{n}\cdots \widetilde{D}_w\cdots\left(F_{v,n,i}\right)^{n}
\cdot f_{n,i}(\alpha_v)\cdot  f_{n,i}(\alpha_{v-1})\cdots  f_{n,i}(\alpha_{v-\ell'+1})$  and $\mu=2n+1$, or
\vspace{1ex}
\item[(D)]$\left(F_{v,n,i}\right)^{n}\cdots \widetilde{D}_w\cdots\left(F_{v,n,i}\right)^{n-1}
\cdot f_{n,i}(\alpha_v)\cdot  f_{n,i}(\alpha_{v-1})\cdots  f_{n,i}(\alpha_{v-\ell'+1})$  and $\mu=2n$, or
\vspace{1ex}
\item[(E)] $\left(F_{v,n,i}\right)^{n-1} \cdots \widetilde{D}_w\cdots \left(F_{v,n,i}\right)^{n}\cdot f_{n,i}(\alpha_v)\cdot  f_{n,i}(\alpha_{v-1})\cdots  f_{n,i}(\alpha_{v-\ell'+1})$ and $\mu = 2n$.
\end{enumerate}
\vspace{1ex}
As in the proof of Lemma \ref{lem:ideal!!!}, we see that $F_{v,n,i}$ is an $e\times e$ block matrix whose
blocks are of the same size as in $f_{n,i}(\alpha_v)$. Moreover, the only block that is not a zero matrix is the
$(a,a)$ block for $a \equiv (v+1)\mod e$ and this block is equal to the matrix $B_{v,n,i,e}$ from (\ref{eq:Bvnie}).

By Lemma \ref{lem:needthiseasy}(i), it follows that $(F_{v,n,i})^{n+1}$ always has entries in $(t_1,\ldots,t_n)$.
By (\ref{eq:assume1}), this means that the matrix products in the cases (A) and (B)
always have entries in $J'$. If $i=0$ or $1\le i \le v\le e-1$ then $B_{v,n,i,e}=N_n$. Hence it also follows that the 
matrix products in the cases (C), (D) and (E) have entries in $J'$. Thus we need to discuss
the cases (C), (D) and (E) when $1\le v\le i-1$ or $v=e$ and $i\ge 1$.

Suppose $1\le v\le i-1$ or $v=e$ and $i\ge 1$. 
If $w\le e-1$ then the matrix products in the cases (C) and (D) have the form
\begin{equation}
\label{eq:need11}
\left(F_{v,n,i}\right)^{n}\cdots f_{n,i}(\alpha_e)\cdots  \widetilde{D}_w\cdots .
\end{equation}
If $w=e$ then the matrix product in  (C) has the form
\begin{equation}
\label{eq:need12}
\left(F_{v,n,i}\right)^{n}\cdots \widetilde{D}_e\cdots  f_{n,i}(\alpha_i)\cdots \left(F_{v,n,i}\right)^{n}\cdots
\end{equation}
and the matrix product in  (D) has the form
\begin{equation}
\label{eq:need12A}
\left(F_{v,n,i}\right)^{n}\cdots \widetilde{D}_e \cdots  f_{n,i}(\alpha_i)\cdots 
\left(F_{v,n,i}\right)^{n-1}\cdots f_{n,i}(\alpha_e) \cdots 
\end{equation}
On the other hand, the matrix product in (E) has the form
\begin{equation}
\label{eq:need13}
\left(F_{v,n,i}\right)^{n-1}\cdots \widetilde{D}_w\cdots \left(F_{v,n,i}\right)^{n}
\cdots f_{n,i}(\alpha_e)\cdots  
\end{equation}
Using the matrices defined in (\ref{eq:Bvni}) and (\ref{eq:Bvnie}), we see that the matrix products
$$(B_{v,n,i,e})^{n}B_{e,n,i}\;,\quad B_{i,n,i}(B_{v,n,i,e})^{n}\;,\quad 
B_{i,n,i}(B_{v,n,i,e})^{n-1}B_{e,n,i}\;,\quad\mbox{and}\quad (B_{v,n,i,e})^{n}B_{e,n,i}$$
all have entries in $(t_1,\ldots,t_n)$. Hence the product of matrices to the left of $\widetilde{D}_w$ in (\ref{eq:need11})
and to the right of $\widetilde{D}_e$ (resp. $\widetilde{D}_w$) in (\ref{eq:need12}) and (\ref{eq:need12A})
(resp. in (\ref{eq:need13}))
has entries in $(t_1,\ldots,t_n)$. By (\ref{eq:assume1}), it follows that the matrix products in (\ref{eq:need11}),
(\ref{eq:need12}), (\ref{eq:need12A}) and (\ref{eq:need13}) all have entries in $J'$.

We conclude that for all $v\in\{1,\ldots,e\}$, each monomial in the matrix product
(\ref{eq:matrixproduct}) involving precisely one matrix $\widetilde{D}_w$ is a matrix with
entries in $J'$. This means that for all $v\in\{1,\ldots,e\}$, the
matrix product (\ref{eq:matrixproduct}) and the matrix product of $\ell=\mu\, e +\ell'$
matrices
\begin{equation}
\label{eq:another1}
\left(F_{v,n,i}\right)^\mu\cdot
f_{n,i}(\alpha_v)\cdot f_{n,i}(\alpha_{v-1})\cdots 
f_{n,i}(\alpha_{v-\ell'+1})
\end{equation}
are congruent modulo $\mathrm{Mat}_{\ell_{n,i}}(J')$. Since the matrix product 
(\ref{eq:matrixproduct}) lies in $\mathrm{Mat}_{\ell_{n,i}}(J')$ for all $v\in\{1,\ldots,e\}$, 
it follows that the matrix product (\ref{eq:another1}) also lies in 
$\mathrm{Mat}_{\ell_{n,i}}(J')$ for all $v\in\{1,\ldots,e\}$.
Let $v_0\in\{1,\ldots,e\}$ be such that $v_0\equiv \ell'\mod e$.
Arguing the same way as in the proof of Lemma
\ref{lem:ideal!!!}, we see that for $v=v_0$, the entries of the
matrix product in (\ref{eq:another1}) generate $J_{n,i}$. But this means that
$J_{n,i}=J'$, which is a contradiction to our assumption that $J'$ is properly contained
in $J_{n,i}$. Therefore, the lift $\widetilde{\tau}'$ does not exist, which implies that
$\gamma$ is an isomorphism in $\hat{\mathcal{C}}$. This
implies that $S=R(\mathcal{N}, V_{n,i})\cong R_{n,i}$ and that
the versal deformation of $V_{n,i}$ over $R_{n,i}$ is given by the strict equivalence class 
$[\rho_{U,n,i}]$.
\end{proof}

We next prove that for $n$ and $i$ as in Theorem \ref{thm:versallift},
the ring $R_{n,i}$ is a universal deformation ring of $V_{n,i}$ by proving that
the deformation functor $\mathrm{Def}_{\mathcal{N}}(V_{n,i},-)$ has the centralizer 
lifting property (see Definition \ref{def:centralizerlifting}).

\begin{thm}
\label{thm:universal}
Let $1\le n\le \mu$ and $0\le i\le e-1$ be integers such that $\ell_{n,i}=n\,e+i$ satisfies 
$\ell_{n,i}\le \lfloor \ell/2 \rfloor$. Let $J_{n,i}$, $R_{n,i}$ and $\rho_{U,n,i}$ be as in 
Theorem $\ref{thm:versallift}$. Then $R_{n,i}$ is a universal deformation ring
of $V_{n,i}$ and the universal deformation of $V_{n,i}$ over $R_{n,i}$ is given by
the strict equivalence class $[\rho_{U,n,i}]$.
\end{thm}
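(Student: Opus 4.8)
The plan is to apply Lemma \ref{lem:centralizerlifting}: by Theorem \ref{thm:versallift} we already know $R(\mathcal{N},V_{n,i})\cong R_{n,i}$ with versal lift $[\rho_{U,n,i}]$, so it suffices to verify that $\mathrm{Def}_{\mathcal{N}}(V_{n,i},-)$ has the centralizer lifting property. Concretely, I would fix a small extension $\alpha:A_1\to A_0$ in $\mathcal{C}$ with kernel $(t)$, $\mathfrak{m}_{A_1}t=0$, and a lift $\tau_1:\mathcal{N}\to\mathrm{Mat}_{\ell_{n,i}}(A_1)$, put $\tau_0=\alpha\circ\tau_1$, and show that the natural map $Z(\tau_1)\to Z(\tau_0)$ is surjective. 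Since surjectivity only depends on the strict equivalence class $[\tau_1]$, I may first use the versality of $[\rho_{U,n,i}]$ to replace $\tau_1$ by a conveniently normalized representative: there is a morphism $\beta:R_{n,i}\to A_1$ with $[\beta\circ\rho_{U,n,i}]=[\tau_1]$, so without loss of generality $\tau_1=\beta\circ\rho_{U,n,i}$ is given in the explicit block form of Definition \ref{def:universallift}, with all ``deformation entries'' lying in $\beta(\mathfrak{m}_{R_{n,i}})\subseteq\mathfrak{m}_{A_1}$.

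The heart of the argument is then a direct computation of the centralizer of $\rho_{n,i}$ (the residual representation) and of its lifts. First I would compute $Z_k:=\{C\in\mathrm{GL}_{\ell_{n,i}}(k)\mid C\,\rho_{n,i}=\rho_{n,i}\,C\}$; because $V_{n,i}$ is uniserial with top $S_1$, the endomorphism algebra $\mathrm{End}_{\mathcal{N}}(V_{n,i})$ is $k[x]/(x^{\,?})$ — spanned by the identity together with the ``descend by $e$'' nilpotent maps — and in the chosen basis $\mathcal{B}_{n,i}$ these are the block matrices built from powers of a shift. The key structural fact is that the unit group of $\mathrm{End}_{\mathcal{N}}(V_{n,i})$ (intersected with $G_{A_j}$) is already the full centralizer, so $Z(\tau_j)$ consists of matrices congruent to the identity that commute with $\tau_j$; each such matrix is of the form $\mathrm{I}+(\text{nilpotent endomorphism-type matrix with entries in }\mathfrak{m}_{A_j})$. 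Given $T_0\in Z(\tau_0)$, I lift its entries arbitrarily to a matrix $\widetilde T_1\in G_{A_1}$ of the same block/endomorphism shape; then $\widetilde T_1\,\tau_1-\tau_1\,\widetilde T_1$ has entries in $\ker(\alpha)=(t)$, hence defines a $1$-cocycle in $\mathrm{Hom}_{\mathcal{N}}\!\big(\Omega(V_{n,i}),V_{n,i}\big)$-type data valued in $(t)$; I then correct $\widetilde T_1$ by an element of $\mathrm{I}+t\cdot(\text{endomorphism matrices})$ to kill this obstruction. The reason the correction succeeds is precisely that the obstruction lives in a space which, by the defining relations of $J_{n,i}$ (Lemma \ref{lem:ideal!!!}), is already accounted for: the relevant Massey-product/obstruction map vanishes because $\rho_{U,n,i}$ is an \emph{actual} algebra homomorphism modulo $J_{n,i}$, so no genuine obstruction to commuting survives.

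The main obstacle I anticipate is the bookkeeping in this last correction step: one must show that the difference $\widetilde T_1\tau_1-\tau_1\widetilde T_1$, a matrix with entries in the one-dimensional ideal $(t)$, always lies in the image of the ``coboundary'' map $D\mapsto D\tau_1-\tau_1 D$ restricted to $D\in t\cdot(\text{endomorphism-type matrices})$. Equivalently, since $\mathfrak{m}_{A_1}t=0$, this map is induced by the cup/Yoneda pairing $\mathrm{End}_{\mathcal{N}}(V_{n,i})\times\mathrm{Ext}^1_{\mathcal{N}}(V_{n,i},V_{n,i})\to\mathrm{Ext}^1_{\mathcal{N}}(V_{n,i},V_{n,i})$, and I must check that the submodule of $\mathrm{Ext}^1$ ``used up'' by $\tau_1$ is stable and that the quotient obstruction vanishes. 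I would carry this out by going to the syzygy description $\mathrm{Ext}^1_{\mathcal{N}}(V_{n,i},V_{n,i})\cong\mathrm{Hom}_{\mathcal{N}}(\Omega(V_{n,i}),V_{n,i})$ from Lemma \ref{lem:ext1} and the explicit basis $\omega_1,\dots,\omega_n$ from the proof of Lemma \ref{lem:ext2}, on which both $\mathrm{End}_{\mathcal{N}}(V_{n,i})$ and the shift act by the same nilpotent operator, so that the relevant pairing is computed by multiplication in a single truncated polynomial ring; the vanishing of the obstruction then reduces to the matrix identities for powers of $N_n$ collected in Lemma \ref{lem:needthiseasy}. Once surjectivity of $Z(\tau_1)\to Z(\tau_0)$ is established for this normalized $\tau_1$, Lemma \ref{lem:centralizerlifting} gives that $R_{n,i}=R(\mathcal{N},V_{n,i})$ is universal and that $[\rho_{U,n,i}]$ is the universal deformation, completing the proof.
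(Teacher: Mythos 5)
Your framework is correct: you rightly reduce the theorem, via Lemma \ref{lem:centralizerlifting}, to showing that for every small extension $\alpha:A_1\to A_0$ and every lift $\tau_1$ of $\rho_{n,i}$ the map $Z(\tau_1)\to Z(\tau_0)$ is surjective, and you rightly use versality to normalize $\tau_1=\beta\circ\rho_{U,n,i}$. But the core step where your argument diverges from the paper is the surjectivity of $Z(\tau_1)\to Z(\tau_0)$, and there your proposal has a genuine gap.

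You propose an obstruction-theoretic strategy: lift $T_0$ to some $\widetilde T_1\in G_{A_1}$ ``of the same shape,'' observe that $\widetilde T_1\tau_1-\tau_1\widetilde T_1$ has entries in $(t)$, reinterpret this as a class related to $\mathrm{Ext}^1$, and correct $\widetilde T_1$ by an element of $\mathrm{I}+t\cdot(\text{endomorphism matrices})$ to kill the defect. Two problems. First, it is not actually clear which cohomology group your ``obstruction'' lives in, and the Yoneda/cup-pairing heuristic you gesture at is not set up precisely enough to conclude vanishing; you concede that the bookkeeping is the difficulty, but that bookkeeping \emph{is} the theorem, and your outline leaves it open. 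Second, the correction mechanism is delicate: since $\mathfrak{m}_{A_1}t=0$, the map $D\mapsto D\tau_1-\tau_1 D$ on $t\cdot(\text{matrices})$ only sees $\rho_{n,i}$, not $\tau_1$, and you have not shown that the defect lies in its image rather than in the quotient by that image (which is exactly where a genuine obstruction would sit).

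The paper avoids all of this by a more direct calculation that makes the lifting automatic. It computes $Z_\gamma=\{\Sigma\mid\Sigma\tau_\gamma=\tau_\gamma\Sigma\}$ explicitly from the block structure of $\rho_{U,n,i}$: commuting with the vertex idempotents forces $\Sigma$ block-diagonal; commuting with the arrow matrices forces most diagonal blocks to coincide and reduces the remaining condition to $\Sigma_{1,1} M_{\theta(1,n,i)}=M_{\theta(1,n,i)}\Sigma_{1,1}$ for $M=\hat\gamma(N_n)$ or $\hat\gamma(\widetilde N_n)$; and comparing columns shows this is equivalent to $\Sigma_{1,1}=M(\vec c)$ for a \emph{free} column-vector parameter $\vec c\in R^{\theta(1,n,i)}$, with the last ``wraparound'' equation following for free from Lemma \ref{lem:needthiseasy}(ii). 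Once $Z_\gamma\cap G_R$ is identified with $\{\vec c\in R^{\theta(1,n,i)} : c_1\equiv 1,\ c_{\ge 2}\equiv 0\ (\mathrm{mod}\ \mathfrak m_R)\}$, surjectivity of $Z_{\gamma_1}\cap G_{A_1}\to Z_{\gamma_0}\cap G_{A_0}$ along any surjection $A_1\to A_0$ is immediate, with no obstruction to compute. So the essential insight you are missing is that the centralizer is \emph{freely} parameterized and therefore lifts trivially; if you had that explicit parameterization, you would discover that the obstruction you are preparing to kill does not arise in the first place. As your proposal stands, the claim that the obstruction vanishes is asserted rather than proved, and establishing it would in effect require redoing the paper's explicit centralizer computation.
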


\begin{proof}
Let $\rho_{n,i}:\mathcal{N}\to\mathrm{Mat}_{\ell_{n,i}}(k)$ be the representation of 
$V_{n,i}$ from Definition \ref{def:notation}, let $R_{n,i}=k[[t_1,\ldots,t_n]]/J_{n,i}$, and let
$$\rho_{U,n,i}:\qquad \mathcal{N}\to \mathrm{Mat}_{\ell_{n,i}}(R_{n,i})$$
be the $k$-algebra homomorphism from Lemma $\ref{lem:ideal!!!}$.
Let $R$ be an arbitrary ring in $\hat{\mathcal{C}}$, let $\gamma:R_{n,i}\to R$
be a morphism in $\hat{\mathcal{C}}$, and define 
$\tau_\gamma:\mathcal{N}\to \mathrm{Mat}_{\ell_{n,i}}(R)$ by 
$$\tau_\gamma = \gamma\circ \rho_{U,n,i}\;.$$
More precisely, for $1\le j\le n$, define $r_j=\gamma(t_j)\in R$.
Let $\hat{\gamma}:k[[t_1,\ldots,t_n]]\to R$ be the morphism in $\hat{\mathcal{C}}$
defined by $\hat{\gamma}(t_j)=r_j$ for $1\le j\le n$. Then, for all $v\in\{1,\ldots,e\}$, we have
$$\tau_\gamma(v)= \hat{\gamma}(f_{n,i}(v))\qquad\mbox{and}\qquad
\tau_\gamma(\alpha_v)= \hat{\gamma}(f_{n,i}(\alpha_v))$$
where $f_{n,i}:Q_{e,0}\cup Q_{e,1}\to \mathrm{Mat}_{\ell_{n,i}}(k[[t_1,\ldots,t_n]])$ is as in Definition \ref{def:universallift}. 

We first determine the set
$$Z_{\gamma}=\{\Sigma\in \mathrm{Mat}_{\ell_{n,i}}(R)\;|\;\Sigma\,\tau_\gamma=\tau_\gamma\,\Sigma\}\;.$$
Each matrix $\Sigma$ in $Z_{\gamma}$ is an $e\times e$ block matrix
$\Sigma=\left(\Sigma_{a,b}\right)_{1\le a,b\le e}$ such that 
$\Sigma_{a,b}$ is a $\theta(a,n,i)\times \theta(b,n,i)$ matrix and 
$\theta(a,n,i)$ and $\theta(b,n,i)$ are as in (\ref{eq:delta}).

For $1\le v\le e$, the condition $\Sigma\;\tau_\gamma(v)=\tau_\gamma(v)\,\Sigma$
means that $\Sigma_{a,b}$ is the zero matrix for $a\neq b$.
For $1\le v\le i-1$ or $i+1\le v\le e-1$, the condition
$\Sigma\;\tau_\gamma(\alpha_v)=\tau_\gamma(\alpha_v)\,\Sigma$
additionally means that 
$\Sigma_{v,v}=\Sigma_{v+1,v+1}$. In other words,
$$\Sigma_{1,1}=\cdots=\Sigma_{i,i}\quad\mbox{and}\quad  \Sigma_{i+1,i+1}=\cdots=\Sigma_{e,e}\;.$$
If $i=0$ then the condition $\Sigma\,\tau_\gamma(\alpha_e)=\tau_\gamma(\alpha_e)\,\Sigma$ additionally means
that 
\begin{equation}
\label{eq:i0condition2}
\Sigma_{1,1}\,\hat{\gamma}(N_n) = \hat{\gamma}(N_n)\,\Sigma_{e,e} \;.
\end{equation}
If $i\ge 1$ then the conditions $\Sigma\,\tau_\gamma(\alpha_e)=\tau_\gamma(\alpha_e)\,\Sigma$ 
and $\Sigma\,\tau_\gamma(\alpha_i)=\tau_\gamma(\alpha_i)\,\Sigma$ additionally mean
that 
\begin{equation}
\label{eq:11entry}
\Sigma_{1,1} = \left(\begin{array}{c|ccc}c_{1,1}&0&\cdots&0\\ \hline c_{2,1}&&&\\ \vdots&&\Sigma_{e,e}&\\
c_{n+1,1}&&&\end{array}\right)
\end{equation}
for appropriate elements $c_{1,1},\ldots,c_{n+1,1}$ 
in $R$, and that
\begin{equation}
\label{eq:i1condition2A}
\Sigma_{1,1}\,\hat{\gamma}(\widetilde{N}_n) = \hat{\gamma}(\widetilde{N}_n)\,\Sigma_{1,1} \;.
\end{equation}
Recall that $\theta(1,n,i)=n$ if $i=0$ and
that $\theta(1,n,i)=n+1$ if $i\ge 1$. Define $M_n=\hat{\gamma}(N_n)$ when $i=0$, and define 
$M_{n+1}=\hat{\gamma}(\widetilde{N}_n)$ when $i\ge 1$. Then (\ref{eq:i0condition2}) for $i=0$ (resp. 
(\ref{eq:i1condition2A}) for $i\ge 1$) is the same as
\begin{equation}
\label{eq:together}
\Sigma_{1,1}\,M_{\theta(1,n,i)} = M_{\theta(1,n,i)}\,\Sigma_{1,1} \;.
\end{equation}
Write the column vectors of $\Sigma_{1,1}$ as $\vec{c}_{1},\ldots,\vec{c}_{\theta(1,n,i)}$.
Comparing the left and right hand sides of (\ref{eq:together}) column by column, we obtain the following conditions:
\begin{eqnarray}
\label{eq:newcond1}
M_{\theta(1,n,i)}\,
\vec{c}_b&=& 
\vec{c}_{b+1}
\qquad \mbox{for }
1\le b \le \theta(1,n,i)-1,\\[2ex]
\label{eq:newcond2}
M_{\theta(1,n,i)}\,
\vec{c}_{\theta(1,n,i)}&=& 
r_{\theta(1,n,i)}\,\vec{c}_1 + r_{\theta(1,n,i)-1}\,\vec{c}_2+\cdots + r_1\,\vec{c}_1
\;,
\end{eqnarray}
where we define $r_{n+1}=0$.
Using induction, we see that (\ref{eq:newcond1}) is equivalent to the condition
\begin{equation}
\label{eq:condfinal1}
\vec{c}_{b}\;=\;
\left(M_{\theta(1,n,i)}\right)^{b-1}\, 
\vec{c}_1
\qquad\mbox{for } 1\le b\le \theta(1,n,i).
\end{equation}
In other words, the second column through the last column of $\Sigma_{1,1}$
can be obtained from its first column by multiplying by an appropriate power of $M_{\theta(1,n,i)}$. Substituting
(\ref{eq:condfinal1}) into (\ref{eq:newcond2}) and using that $M_{\theta(1,n,i)}=\hat{\gamma}(N_n)$ when
$i=0$ and that $r_{n+1}=0$ and $M_{\theta(1,n,i)}=\hat{\gamma}(\widetilde{N}_n)$ when 
$i\ge 1$, we obtain by Lemma \ref{lem:needthiseasy}(ii) that  (\ref{eq:newcond2}) follows from 
(\ref{eq:newcond1}). 

Given a column vector $\vec{c}$ in $R^{\theta(1,n,i)}$, we define the following two matrices:
\begin{itemize}
\item $M(\vec{c})$ is the $\theta(1,n,i)\times \theta(1,n,i)$ matrix whose $b$-th column vector is
equal to $\left(M_{\theta(1,n,i)}\right)^{b-1}\,\vec{c}\,$ for $1\le b\le \theta(1,n,i)$;
\item $M'(\vec{c})$ is the $(\theta(1,n,i)-1)\times (\theta(1,n,i)-1)$ matrix obtained from $M(\vec{c})$ by deleting its first row and first column.
\end{itemize}

Summarizing the above arguments, we obtain that 
$\Sigma=\left(\Sigma_{a,b}\right)_{1\le a,b\le e}$ lies in $Z_\gamma$ if and only if
there exists a column vector $\vec{c}\in R^{\theta(1,n,i)}$
such that 
\begin{equation}
\label{eq:final}
\left\{
\begin{array}{rcll}
\Sigma_{a,b}&=&\mbox{(zero matrix)} &\mbox{ if $a\ne b$,
}\\[2ex]
\Sigma_{a,a}&=&M(\vec{c})&\mbox{ if $i=0$ or $i\ge 1$ and $1\le a\le i$;}\\[2ex]
\Sigma_{a,a}&=&M'(\vec{c})&\mbox{ if $i\ge 1$ and $i+1\le a\le e$.}
\end{array}\right.
\end{equation}

As in Definition \ref{def:centralizerlifting}, define
$G_{R}=\mathrm{Ker}(\mathrm{GL}_{\ell_{n,i}}(R)\xrightarrow{\pi_{R}} \mathrm{GL}_{\ell_{n,i}}(k))$.
In other words, $G_R$ consists of all the matrices in $\mathrm{GL}_{\ell_{n,i}}(R)$ that are congruent to the identity 
matrix modulo $\mathfrak{m}_R$. Then
$\Sigma$ lies in $Z_\gamma\cap G_R$ if and only if $\Sigma$ satisfies the conditions in (\ref{eq:final})
and, additionally, the entries $c_1,c_2,\ldots,c_{\theta(1,n,i)}$ of $\vec{c}$
satisfy
\begin{equation}
\label{eq:cond_c_d}
c_1-1\in \mathfrak{m}_R,\qquad c_j\in \mathfrak{m}_R, \quad 2\le j\le \theta(1,n,i)\;.
\end{equation}

We next use the above analysis of $Z_\gamma$ and $Z_\gamma\cap G_R$ to prove that the deformation functor 
$\mathrm{Def}_{\mathcal{N}}(V_{n,i},-)$ has the centralizer lifting property. 
Let $A_1,A_0$ be Artinian rings in $\mathcal{C}$, and let $\alpha:A_1\to A_0$
be a morphism in $\mathcal{C}$ that is surjective. 
Note that $\alpha$ induces a surjective homomorphism $G_{A_1}\to G_{A_0}$.
Suppose $\tau_1:\mathcal{N}\to\mathrm{Mat}_{\ell_{n,i}}(A_1)$ is a lift of the representation $\rho_{n,i}$ of $V_{n,i}$
over $A_1$, and define $\tau_0=\alpha\circ\tau_1$. Since $[\rho_{U,n,i}]$ is a versal deformation of $\rho_{n,i}$ over
the versal deformation ring $R_{n,i}$, there exists a morphism
$\gamma_1: R_{n,i} \to A_1$ in $\hat{\mathcal{C}}$ such that
$$[\gamma_1\circ\rho_{U,n,i}] = [\tau_1]\;.$$
In other words, there exists a matrix $\Upsilon_1\in G_{A_1}$ such that
$$\gamma_1\circ\rho_{U,n,i} = \Upsilon_1 \, \tau_1 \,\Upsilon_1^{-1}\;.$$
Define $\Upsilon_0=\alpha(\Upsilon_1)$ and $\gamma_0=\alpha\circ\gamma_1$. 
Then $\Upsilon_0\in G_{A_0}$, and
$$\gamma_0\circ\rho_{U,n,i} = \Upsilon_0 \, \tau_0 \,\Upsilon_0^{-1}\;.$$
For $j\in\{0,1\}$ define $Z(\tau_j)=\{\Sigma_j\in G_{A_j}\;|\;\Sigma_j\,\tau_j=\tau_j\,\Sigma_j\}$.
Since $\tau_{\gamma_j}=\gamma_j\circ\rho_{U,n,i}$ in the definition of $Z_{\gamma_j}$, 
we have the equality
\begin{equation}
\label{eq:oy1}
Z(\tau_j)=\Upsilon_j^{-1}\,\left(Z_{\gamma_j}\cap G_{A_j}\right)\,\Upsilon_j\;.
\end{equation}
To prove that $\mathrm{Def}_{\mathcal{N}}(V_{n,i},-)$ has the centralizer lifting property, we need to show
that the natural homomorphism $Z(\tau_1)\to Z(\tau_0)$ induced by $\alpha$ is surjective. 
By our analysis of $Z_{\gamma_j}\cap G_{A_j}$ for $j\in\{0,1\}$ above, it follows that the natural homomorphism $Z_{\gamma_1}\cap G_{A_1}\to
Z_{\gamma_0}\cap G_{A_0}$ induced by $\alpha$ is surjective. Since $\Upsilon_0=\alpha(\Upsilon_1)$, the equality (\ref{eq:oy1}) 
then implies that the natural homomorphism $Z(\tau_1)\to Z(\tau_0)$ induced by $\alpha$ is surjective.
This completes the proof of Theorem \ref{thm:universal}.
\end{proof}

\medskip

\noindent
\textit{Proof of Theorem $\ref{thm:supermain}$.}
As in (\ref{eq:Nsimplify}), define $\mathcal{N}=\mathcal{N}(e,\ell)=k\,Q_e/J^\ell$. 
Write $\ell = \mu\, e + \ell'$ as in (\ref{eq:ell}).
Suppose $V$ is a finitely generated indecomposable non-projective $\mathcal{N}$-module, and
let $\ell_V = \mathrm{min}\{\mathrm{dim}_k\, V, \ell-\mathrm{dim}_k \,V \}$ be as in Theorem \ref{thm:supermain}.
Since $R(\mathcal{N},V)\cong R(\mathcal{N},\Omega(V))$ and since $R(\mathcal{N},V)$ is 
universal if and only if $R(\mathcal{N},\Omega(V))$ is universal, we can replace $V$ by 
$\Omega(V)$, if necessary, to be able to assume that $\ell_V=\mathrm{dim}_k\, V$. By taking a cyclic permutation of
the vertices $1,\ldots, e$ of the quiver $Q_e$ of $\mathcal{N}$, if necessary, we can also assume
that the radical quotient of $V$ is isomorphic to the simple $\mathcal{N}$-module corresponding to the vertex $1$.
Writing $\ell_V=n\, e + i$ as in (\ref{eq:ellV}), this means that, comparing the notation
in Theorem \ref{thm:supermain} with the notation in Definition \ref{def:notation}, we  have
$$V=V_{n,i}\quad \mbox{and} \quad \ell_V=\ell_{n,i}\le \lfloor \ell/2 \rfloor\;.$$

Suppose first that $n=0$. By Lemma \ref{lem:ext1}, $\mathrm{Ext}^1_{\mathcal{N}}(V,V)=0$. Thus, Remark 
\ref{rem:superlame} implies that the versal deformation ring of $V$ is universal and 
isomorphic to $k$.
Since $J_{0}(a)$ is the zero ideal of $k$ for all $a\ge 0$ by Definition \ref{def:needthis}(b),
Theorem \ref{thm:supermain} follows when $n=0$.

Suppose now that $1\le n\le \mu$. Defining $m_V$ as in (\ref{eq:mV}), this means that, comparing the notation
in Theorem \ref{thm:supermain} with the notation in Definition \ref{def:universallift}, we additionally have
$$m_V=m_{i}\quad \mbox{and} \quad J_{n,i}=J_{n}(m_V)\;.$$
Therefore, Theorem \ref{thm:supermain} follows from Theorem \ref{thm:universal} when $n\ge 1$. \hfill $\Box$


\section{Stable equivalences of Morita type}
\label{s:stableMorita}
\setcounter{equation}{0}

The goal of this section is to prove Theorem \ref{thm:main2} and Corollary \ref{cor:supermain}.
As before, assume $k$ is an arbitrary field. Let $\Lambda$ and $\Gamma$ be two finite dimensional $k$-algebras.

Following Brou\'{e} \cite{broue1}, we say that there is a \emph{stable equivalence of Morita type} between 
$\Lambda$ and $\Gamma$ if there exist $X$ and $Y$ such that
$X$ is a $\Gamma$-$\Lambda$-bimodule and $Y$ is a $\Lambda$-$\Gamma$-bimodule, 
$X$ and $Y$ are projective both as left and as right modules, and 
we have the following isomorphisms
\begin{eqnarray}
\label{eq:stab}
Y\otimes_{\Gamma}X&\cong& \Lambda\oplus P \quad\mbox{ as $\Lambda$-$\Lambda$-bimodules, and} \\ 
\nonumber
X\otimes_{\Lambda}Y&\cong& \Gamma\oplus Q \quad\mbox{ as $\Gamma$-$\Gamma$-bimodules}, 
\end{eqnarray}
where $P$ is a projective $\Lambda$-$\Lambda$-bimodule, and $Q$ is a projective 
$\Gamma$-$\Gamma$-bimodule.

In particular, $X\otimes_{\Lambda}-$ and $Y\otimes_{\Gamma}-$ induce mutually inverse 
equivalences between the stable module categories $\Lambda\mbox{-\underline{mod}}$ and 
$\Gamma\mbox{-\underline{mod}}$.

\medskip

\textit{Proof of Theorem $\ref{thm:main2}$.}
Let $\Lambda$ be an indecomposable finite dimensional  $k$-algebra such that there exists
a stable equivalence of Morita type between $\Lambda$ and a  self-injective split basic Nakayama
algebra $\mathcal{N}$ over $k$. Suppose $V$ is a finitely generated indecomposable $\Lambda$-module.
If $V$ is projective, it follows from Remark \ref{rem:superlame} that $R(\Lambda,V)$ is universal and 
isomorphic to $k$, which proves part (i).

Suppose now that $V$ is not projective, and let $L$ be the Loewy length of $\Lambda$. If $L=1$ then all $\Lambda$-modules
are projective. Hence $L\ge 2$.

Suppose first that $L=2$. By \cite[Cor. 1.2 and Thm. 2.3]{Reiten}, it follows that $\Lambda$ is a 
Nakayama algebra.
Hences $V$ is a simple non-projective $\Lambda$-module. If 
$\mathrm{Ext}^1_\Lambda(V,V)=0$ then it follows from Remark \ref{rem:superlame} that $R(\Lambda,V)$ 
is universal and isomorphic to $k$. Suppose now that $\mathrm{Ext}^1_\Lambda(V,V)\neq 0$. Then the
projective cover $P_V$ of $V$ is a uniserial $\Lambda$-module of length 2, with composition factors
$V,V$. Since $\Lambda$ is indecomposable, this implies that, up to isomorphism,
there is a unique indecomposable projective $\Lambda$-module and a unique simple $\Lambda$-module
(see, for example, \cite[Prop. II.5.2]{ars}). But then $\Lambda$ is self-injective and the stable Auslander-Reiten
quiver $\Gamma_s(\Lambda)$ is a single vertex with no arrows. Since the stable Auslander-Reiten quivers
of $\Lambda$ and $\mathcal{N}$ are isomorphic as valued quivers, it follows that $\mathcal{N}\cong
\mathcal{N}(1,2)$ and that $V$ corresponds to a simple $\mathcal{N}$-module $S$, which is unique up to
isomorphism. Since $R(\mathcal{N},S)$ is universal and isomorphic to $k[[t]]/(t^2)$ by Theorem \ref{thm:supermain}, 
it follows by \cite[Prop. 3.2.6]{blehervelezderived} that $R(\Lambda,V)$ is also universal and isomorphic to 
$k[[t]]/(t^2)$, which proves part (ii).

Finally, suppose $L\ge 3$. By \cite[Thm. 2.4]{Reiten}, it follows that $\Lambda$ is self-injective. By 
\cite[Prop. X.1.8]{ars}, $\Lambda$ has no almost split sequences with projective middle terms.
Since $\Lambda$ has finite representation type, its Auslander-Reiten quiver is connected.
Therefore, it follows that the stable Auslander-Reiten quiver $\Gamma_s(\Lambda)$ is also connected. 
Since the stable Auslander-Reiten quivers of $\Lambda$ and $\mathcal{N}$ are isomorphic as valued quivers, 
there must exist integers $e\ge 1$ and $\ell\ge 3$ such that $\mathcal{N}\cong\mathcal{N}(e,\ell)$.
By Theorem \ref{thm:supermain}, for each finitely generated indecomposable non-projective $\mathcal{N}$-module $T$, 
the isomorphism type of $R(\mathcal{N},T)$ is uniquely determined by the distance of $[T]$ to the closest 
boundary in the stable Auslander-Reiten quiver $\Gamma_s(\mathcal{N})$. Since the stable equivalence of Morita 
type preserves these distances in the respective Auslander-Reiten quivers by \cite[Prop. X.1.6]{ars}, part (iii) now follows
from Theorem \ref{thm:supermain} and \cite[Prop. 3.2.6]{blehervelezderived}.
This completes the proof of Theorem \ref{thm:main2}.
\hfill $\Box$

\medskip

\textit{Proof of Corollary $\ref{cor:supermain}$.}
Let $\Lambda$ be a Brauer tree algebra with $e$ edges and an exceptional vertex of multiplicity $m\ge 1$.
By \cite[Thm. 4.2]{Rickard1989}, there exists a derived equivalence between $\Lambda$ and a Brauer tree algebra
whose Brauer tree is a star with $e$ edges and central exceptional vertex of multiplicity $m$.
The latter is Morita equivalent to the symmetric split basic Nakayama algebra $\mathcal{N}(e,me+1)$.
Since Brauer tree algebras are symmetric, it follows by \cite[Cor. 5.5]{Rickard1991} that the derived
equivalence between $\Lambda$ and $\mathcal{N}(e,me+1)$ induces a stable equivalence of Morita type 
between these two algebras. Therefore, Corollary \ref{cor:supermain} follows from Theorem \ref{thm:supermain} 
and \cite[Prop. 3.2.6]{blehervelezderived}. 
\hfill $\Box$


\end{document}